\newcommand{\rleq}{\trianglelefteq}
\newcommand{\rgeq}{\trianglerighteq}
\newtheorem{proposition}{Proposition}
\newtheorem{theorem}[proposition]{Theorem}
\newtheorem{lemma}[proposition]{Lemma}
\newtheorem{corollary}[proposition]{Corollary}
\theoremstyle{definition}
\newtheorem{definition}[proposition]{Definition}
\newtheorem{remark}[proposition]{Remark}
\newcommand{\mc}[1]{\mathcal{#1}}
\newcommand{\ms}[1]{\mathsf{#1}}
\newcommand{\mb}[1]{\mathbb{#1}}
\newcommand{\N}{\mathbb N}
\newcommand{\R}{\mathbb R}
\newcommand{\Z}{\mathbb Z}
\newcommand{\Q}{\mathbb Q}
\newcommand{\T}{\mathbb T}
\def\<{\langle}
\def\>{\rangle}
\newcommand{\fii}{\varphi}
\newcommand{\eps}{\varepsilon}
\newcommand{\tj}{\vartheta}
\let\originalleft\left
\let\originalright\right
\renewcommand{\left}{\mathopen{}\mathclose\bgroup\originalleft}
\renewcommand{\right}{\aftergroup\egroup\originalright}
\newcommand{\major}{\preceq}
\newcommand{\minor}{\succeq}
\newcommand{\majo}{\succeq}
\newcommand*{\saved@uline}{}
\let\saved@uline\uline
\newcommand*{\mathuline}{%
  \mathpalette{\math@uline\saved@uline}%
}
\newcommand*{\math@uline}[3]{%
  \mbox{#1{$#2#3\m@th$}}%
}
\begin{document}

\title[Matrix majorization in large samples]{Matrix majorization in large samples}

    \author{Muhammad Usman Farooq$^1$, Tobias Fritz$^2$, \\ Erkka Haapasalo$^3$, Marco Tomamichel$^{3,4}$}
    \address{$^1$Department of Mathematics, City University of Hong Kong, Hong Kong}
    \address{$^2$Department of Mathematics, University of Innsbruck, Austria}
    \address{$^3$Centre for Quantum  Technologies,  National University of Singapore, Singapore}
    \address{$^4$Department of Electrical and Computer Engineering, National University of Singapore, Singapore}
\maketitle

\begin{abstract}
One tuple of probability vectors is more informative than another tuple when there exists a single stochastic matrix transforming the probability vectors of the first tuple into the probability vectors of the other. This is called matrix majorization. Solving an open problem raised by Mu {\it et al}, we show that if certain monotones\,---\,namely multivariate extensions of R\'enyi divergences\,---\,are strictly ordered between the two tuples, then for sufficiently large $n$, there exists a stochastic matrix taking the $n$-fold Kronecker power of each input distribution to the $n$-fold Kronecker power of the corresponding output distribution. The same conditions, with non-strict ordering for the monotones, are also necessary for such matrix majorization in large samples.

Our result also gives conditions for the existence of a sequence of statistical maps that asymptotically (with vanishing error) convert a single copy of each input distribution to the corresponding output distribution with the help of a catalyst that is returned unchanged. Allowing for transformation with arbitrarily small error, we find conditions that are both necessary and sufficient for such catalytic matrix majorization.

We derive our results by building on a general algebraic theory of preordered semirings recently developed by one of the authors. This also allows us to recover various existing results on majorization in large samples and in the catalytic regime as well as relative majorization in a unified manner.
\end{abstract}

\section{Introduction}

\emph{Statistical experiments} are at the basis of much of classical statistics and its applications, such as hypothesis testing \cite{Heyer1982,Strasser85,Torgersen91}: every hypothesis corresponds to a probability distribution over possible observations, and therefore a statistical experiment is simply a family of probability distributions on the same space of outcomes. Given two statistical experiments, one may naturally ask which one is more ``informative'', for example by asking which one yields the higher expected payoff in the varied tests one might apply. Thus, the comparison of statistical experiments becomes a highly relevant question, going back to the work by Blackwell~\cite{Blackwell51,Blackwell53} and continued, e.g.,\ by Stein~\cite{Stein1951} and by Torgersen~\cite{Torgersen85,Torgersen91}. 

Blackwell's formalization of the idea that a statistical experiment $P=\{p^\tj\}_{\tj\in \Theta}$ with probability measures $p^\tj$ on some (standard Borel) measurable space is more informative than another statistical experiment $Q=\{q^\tj\}_{\tj\in\Theta}$ is by requiring that there be a statistical map $T$ (in the form of a Markov kernel) such that $Tp^\tj=q^\tj$ for all $\tj\in\Theta$. 
One may think of this equation as saying that the experiment $Q$ can be simulated by first executing $P$ and then applying a (possibly random) function to its outcome. By Blackwell's celebrated theorem, this is equivalent to obtaining at least as much expected payoff in any test with $P$ than with $Q$. In this situation it is said that $P$ {\it majorizes} $Q$, written $P \majo Q$.

We are particularly interested in the case where both the outcome space for the probability measures and the parameter space $\Theta$ are finite. Within this setting, the majorization of statistical experiments coincides with the notion of {\it matrix majorization} \cite{dahl99}: A matrix $P$ with non-negative entries majorizes another matrix $Q$ with non-negative entries if there is a stochastic matrix $T$ such that $TP=Q$. If the columns $p^{(k)}$ of $P$ and $q^{(k)}$ of $Q$ for $k=1,\ldots,d$ are probability vectors (i.e.,\ $P$ and $Q$ are stochastic), this simply means that the experiment $(p^{(1)},\ldots,p^{(d)})$ majorizes $(q^{(1)},\ldots,q^{(d)})$ in the above sense. We note that for $d=2$ this problem is also known as relative majorization.

In addition to studying majorization between single statistical experiments, we can also look for conditions when one experiment is more informative than another when each is repeated independently. In the language of matrix majorization, we are interested in whether the condition
\begin{equation}
\left((p^{(1)})^{\otimes n},\ldots,(p^{(d)})^{\otimes n}\right)  \majo  \left((q^{(1)})^{\otimes n},\ldots,(q^{(d)})^{\otimes n}\right)
\end{equation}
holds for a sufficiently large number $n\in\N$ of repetitions. We call this {\it matrix majorization in large samples}. It is well known that there are pairs of matrices (or statistical experiments, in general) where neither majorizes the other, but majorization still holds in large samples \cite{Stein1951,Torgersen85}. Moreover, we say that $P$ {\it catalytically majorizes} $Q$ if there are probability vectors $r^{(1)},\ldots,r^{(d)}$ such that
\begin{equation}
(p^{(1)}\otimes r^{(1)},\ldots,p^{(d)}\otimes r^{(d)})\succeq (q^{(1)}\otimes r^{(1)},\ldots,q^{(d)}\otimes r^{(d)}),
\end{equation}
i.e.,\ having the catalytic matrix $R=(r^{(1)},\ldots,r^{(d)})$ enables the transformation of $P$ into $Q$. Finally, we say that $P$ \emph{asymptotically catalytically majorizes} $Q$ if there is a sequence $(Q_n)_n$ of matrices converging to $Q$ such that $P$ catalytically majorizes $Q_n$ for all $n\in\N$.

As we will see, majorization in large samples implies catalytic majorization, and this follows from a known general construction (see e.g.,\ \cite{DuFeLiYi2005}). Sufficient and generically necessary conditions for majorization in large samples in the case $d=2$ were determined by Mu {\it et al.}~in \cite{Mu_et_al_2020}, and analogous conditions for the case of general $d$ were conjectured. In this work, we prove a minor variation of their conjecture (see Remark~\ref{rem:TropicalDiff} for the difference). This provides sufficient and generically necessary conditions for matrix majorization in large samples in general. Our proof uses the real-algebraic machinery derived by one of the authors in \cite{Fritz2021a,Fritz2021b}, namely the theory of preordered semirings. According to these results, the ordering in large samples on certain types of preordered semirings can be characterized in terms of inequalities involving monotone homomorphisms to $\R_+$ and a number of similar monotone maps.
When applied to a suitable preordered semiring describing matrix majorization, these monotones turn out to match the quantities considered by Mu {\it et al.}~very closely.

In more detail, assume that the matrices $P$ and $Q$ are such that all vectors $p^{(k)}$ and $q^{(k)}$ have full support. In this case, the relevant monotones include the quantities of the form
\begin{equation} \label{eq:intro_monotones}
D_{\mathuline{\alpha}}(P) = \frac{1}{\max_{1\leq k\leq d}\alpha_k - 1}\log{\sum_i \prod_{k=1}^d\left(p^{(k)}_i\right)^{\alpha_k}}, 
\end{equation}
where $\mathuline{\alpha}=(\alpha_1,\ldots,\alpha_d)$ is any $d$-tuple of real parameters summing up to $1$, and such that either $0\leq\alpha_1,\ldots,\alpha_d<1$ or $\alpha_k>1$ for some $k$ and $\alpha_\ell\leq0$ for all $\ell \neq k$. Various pointwise limits of these maps are also relevant and discussed when we formally present our results.

For $d=2$, the maps $D_{\mathuline{\alpha}}(P)$ specialize to certain {\it R\'{e}nyi divergences}, namely to $D_{\alpha_1}(p^{(1)}\|p^{(2)})$ for $\alpha_1 \in [\frac{1}{2},1) \cup (1,\infty)$ and similarly to $D_{\alpha_2}(p^{(2)}\|p^{(1)})$ for $\alpha_2 \in [\frac{1}{2}, 1) \cup (1, \infty)$, where
\begin{equation}
D_\alpha(p\|q) = \frac{1}{\alpha-1}\log{\sum_i p_i^\alpha q_i^{1-\alpha}} \,.
\end{equation}
These quantities already appear in known necessary and sufficient conditions for relative majorization in large samples and in the catalytic regime (see, e.g.,~\cite{Brandao2014,GoToma2022,Mu_et_al_2020}). Thus, it is not surprising that the maps presented above for general $d\geq2$ are natural extensions of the R\'{e}nyi divergences; in particular, they are additive under tensor products and satisfy a data-processing inequality, i.e., they are monotonically non-increasing when a stochastic map is applied to all vectors.

Our two main results can now be informally summarized as follows:
\begin{itemize}
\item
\textbf{We give sufficient (and generically necessary) conditions for matrix majorization in large samples.}
In Theorem~\ref{theor:MatrMajSuff} of Section~\ref{sec:MatrixMaj}, we show that if
\begin{equation}
D_{\mathuline{\alpha}}(P) > D_{\mathuline{\alpha}}(Q)\,,\label{eq:intro_suff}
\end{equation}
for all the parameter tuples $\mathuline{\alpha}$ specified above and if furthermore these strict inequalities also hold for certain pointwise limits, then $P$ majorizes $Q$ in large samples. Furthermore, this sufficient condition is almost necessary: if $P$ majorizes $Q$ in large samples, then we must have
\begin{equation}
D_{\mathuline{\alpha}}(P) \geq D_{\mathuline{\alpha}}(Q)\label{eq:intro_nec}
\end{equation}
for all such $\mathuline{\alpha}$.
We also say that our sufficient condition with strict inequality is ``generically necessary'' in order to indicate that strict inequality can be expected to hold in generic cases.

\item
\textbf{We give necessary and sufficient conditions for asymptotic catalytic matrix majorization.}
As mentioned previously, majorization in large samples implies catalytic majorization and thus the conditions in~\eqref{eq:intro_suff} are sufficient for catalytic majorization as well (and they are still generically necessary). Strengthening this, we show in Theorem~\ref{theor:MatrApproxCatMaj} of Section~\ref{sec:MatrixMaj} that asymptotic catalytic majorization is possible if and only if the conditions in~\eqref{eq:intro_nec} are met. More precisely, we show that the following two statements are equivalent:
\begin{enumerate}[label=\alph*)]
\item For all valid parameter tuples $\mathuline{\alpha}$, it holds that
\begin{equation}
D_{\mathuline{\alpha}}(P) \geq D_{\mathuline{\alpha}}(Q) \,.\label{eq:intro_cata}
\end{equation}
\item There exists a sequence of stochastic matrices $Q_n$ converging to $Q$ such that $P$ catalytically majorizes $Q_n$ for all $n$. Moreover, we can assume that the last column of $Q_n$ is always equal to the last column of $Q$.
\end{enumerate}

\end{itemize}

For $d=2$, the latter statement recovers exactly Theorem 17 in the Supplemental Materials of~\cite{Brandao2014}, at least for the case of distributions with full support.\footnote{The paper~\cite{GoToma2022} cast some doubt on the completeness of the proof in~\cite{Brandao2014}.} It thus in particular strengthens Theorem 20 in~\cite{GoToma2022}. While both earlier works relied on lifting results from simple majorization to the relative setting by means of an embedding, our proof avoids this technique and is more direct, allowing us to consider $d > 2$ as well. See also the discussion in Subsection~\ref{subsec:d2}.

Another theme that we study in this paper is (simple or absolute) majorization between probability vectors. A vector $p$ with non-negative entries majorizes another such vector $q$ if there is $n\in\N$ such that, writing the uniform $n$-entry probability vector as $u_n=(1/n,\ldots,1/n)$, we have $(p,u_n)\succeq(q,u_n)$. The Hardy-Littlewood-P\'{o}lya theorem gives an easily checked condition for this type of majorization in the form of inequalities of the form \eqref{eq:majorizationProb} given in the appendices. Necessary and sufficient conditions for catalytic simple majorization have been provided previously by Klimesh~\cite{klimesh2007inequalities} and Turgut~\cite{Turgut2007}. Large-sample results have been proven, e.g.,\ by Jensen~\cite{Jensen2019}, which we extend in the case of vectors with equal support. Aubrun and Nechita derived an asymptotic version of these results in~\cite{AubrunNechita2007}. In the appendices, we demonstrate the utility of the novel semiring methods by giving straightforward new proofs for these results.

\smallskip

The remainder of this paper is organized as follows. We start by introducing relevant definitions and concepts central to our work in Section~\ref{sec:preliminaries}. In particular, we summarize the real-algebraic results of \cite{Fritz2021a,Fritz2021b} relevant for our applications. We formally introduce the concept of matrix majorization together with the preordered semiring relevant to this setting in Section~\ref{sec:MatrixMaj}, and this culminates in the results summarized above. In Section~\ref{sec:conclusion} we quickly touch on applications of our results and future prospects. We also mention some connections and possible extensions to quantum information theory. The appendices complement this work with a rederivation of some established results on simple majorization: In Section~\ref{sec:majsemirings} we set up the algebraic framework for studying simple majorization in large samples and in the catalytic regime. In Section~\ref{sec:classicalResults}, we demonstrate the applicability of the preordered semiring methods by rederiving central results concerning majorization in large samples and in the catalytic regime previously proven in \cite{AubrunNechita2007,klimesh2007inequalities,Jensen2019} by more {\it ad hoc} methods.

\section{Preliminaries}\label{sec:preliminaries}

With our upcoming applications in mind, in this section we review the mathematical prerequisites pertaining to probability vectors and preordered semirings, and we review existing separation theorems (\emph{Vergleichsstellens\"atze}) which characterize the induced large-sample preorders on sufficiently well-behaved preordered semirings.

\subsection{Basic notation}

We present some basic notation and definitions for finite-outcome probability distributions that we will use throughout this work. We follow the convention $\N:=\{1,2,\ldots\}$. We denote by $\log{}$ and, respectively, $\exp{}$ the logarithm and, respectively, exponential function with respect to a fixed but otherwise undetermined basis greater than 1. For every $n\in\N$, we define the map $\|\cdot\|_1$ on $\R^n$ through
\begin{equation}
\|x\|_1:=|x_1|+\cdots+|x_n|
\end{equation}
for all $x=(x_1,\ldots,x_n)\in\R^n$. For $x=(x_1,\ldots,x_n)\in\R^n$ and $y=(y_1,\ldots,y_m)\in\R^m$ ($n,m\in\N$) we define the direct sum $x\oplus y\in\R^{m+n}$ and the Kronecker product $x\otimes y\in\R^{mn}$ through
\begin{align}
x\oplus y & = (x_1,\ldots,x_n,y_1,\ldots,y_m),\\
x\otimes y & = (x_i y_j)_{i,j},
\end{align}
where the latter one is viewed as a vector of length $mn$; the ordering of the elements is irrelevant throughout this work, so we do not specify it further.

Recall that $p\in\R^n$, for some $n\in\N$, is a probability vector if its entries are non-negative and $\|p\|_1=1$. We denote the set of all $n$-entry probability vectors by $\mc P_n$ and define the set of all finite probability vectors through
\begin{equation}
\mc P_{<\infty}:=\bigcup_{n\in\N}\mc P_n.
\end{equation}
For every $s\in\N$, we denote the uniform probability distribution on $s$ outcomes (probability vector in $\R^s$) by
\begin{equation}
	u_s := \big( \underbrace{ 1/s, \ldots, 1/s}_{s \textrm{ entries}} \big).
\end{equation}
Given a finite set $I\subset\N$, we similarly denote by $u_I$ the distribution with $(u_I)_i=1/|I|$ for $i\in I$ and $(u_I)_i=0$ otherwise, and consider $u_I \in \R^n$ for all $n \ge \max I$. Note that, with a slight abuse of notation, $u_I$ does not explicitly depend on the size $n$; the zeros at the end will not be essential in our context.

\subsection{Preordered semirings and separation theorems}
\label{sec:preorderedSemirings}

\begin{definition}[preordered commutative semiring]\label{def:preorderSemiring}
	A \emph{preordered commutative semiring} is a tuple ($S, +, \cdot, 0, 1, \rleq$) where $S$ is a set equipped with two binary operations $+$, $\cdot$ that are both commutative and associative, have respective neutral elements $0$ and $1$, and satisfy
	\begin{equation}
		0 \cdot x = 0, \qquad (x+y) \cdot z= x \cdot z + y \cdot z
	\end{equation}
	for all $x,y,z\in S$. The preorder $\rleq$ (a reflexive and transitive binary relation) is required to be such that $a \rleq b$ implies $a+c \rleq b+c$ and $ac \rleq bc$ for all $a,b,c\in S$.
\end{definition}

We will use the notation $a \rleq b$ and $b \rgeq a$ interchangeably. 
Note that unlike in rings, subtraction is typically not possible in semirings.
From now on and throughout the paper, we use `semiring' as shorthand for `commutative semiring'.

\begin{definition}[preordered semidomain~\cite{Fritz2021a}] A preordered semiring $S$ is a {\it preordered semidomain} if it has no zero divisors, i.e.,\ $xy=0$ for $x,y\in S$ implies that $x=0$ or $y=0$, and if $x = 0$ is the only $x \in S$ with $0 \rleq x \rleq 0$.
\end{definition}

Moreover, a semiring $S$ is {\it zerosumfree} if $x+y=0$ for $x,y\in S$ implies $x=0=y$. Further, we define an equivalence relation $\sim$ in a preordered semiring $S$ via $x \sim y$ if there is a finite sequence $z_0,\ldots, z_n\in S$ such that for all $i$ we have $z_{i} \rleq z_{i+1}$ or $z_i \rgeq z_{i+1}$, and also $z_0 = x$ and $z_n = y$, or equivalently if there is a zigzag $x\rleq z_1\rgeq z_2\rleq\cdots\rgeq z_n\rleq y$. In other words, $\sim$ is the equivalence relation generated by $\rleq$.

We denote by $\R_+$ the preordered semiring of non-negative real numbers equipped with the usual sum and product and with the usual $\leq$-relation as preorder. We further define the semiring of {\it tropical reals} $\T\R_+$ as coinciding with $\R_+$ as a preordered set, and where the multiplication and ordering of $\T\R_+$ are the usual ones, but where the sum $+$ is given by $x+y=\max\{x,y\}$ instead. We further utilize $\R_+^{\rm op}$ and $\T\R_+^{\rm op}$, where the superscript $^{\rm op}$ stands for reversing the preorder. (Reversing the preorder on a preordered semiring produces another preordered semiring.)

An important property that a preordered semiring $S$ may or may not have is {\it polynomial growth}, defined as follows.

\begin{definition}[polynomial growth~\cite{Fritz2021a}]
For a preordered semiring $S$, we say that $S$ is of {\it polynomial growth} if it possesses a {\it power universal pair} $(u_-,u_+)$, i.e.,\ nonzero $u_\pm\in S$ such that:
\begin{itemize}
	\item $u_-\rleq u_+$,
	\item For all non-zero $x,y\in S$ such that $x\rleq y$, there is $k\in\N$ such that $u_-^k y\rleq u_+^k x$.
\end{itemize}
If $u_-$ has a multiplicative inverse, this pair can be replaced with a single {\it power universal} $u \rgeq 1$ which has the property that, whenever $x\rleq  y$ for non-zero $x,y\in S$, there is $k\in\N$ such that $y\rleq  x u^k$.
\end{definition}

\begin{definition}[monotone homomorphism]\label{def:monotonehomomorphism} Given a preordered semiring $S$, let $\mb K\in\{\R_+,\R_+^{\rm op},\T\R_+,\T\R_+^{\rm op}\}$. Then a \emph{monotone homomorphism} to $\mb K$ is a map $\Phi:S \to \mb K$ such that for all $a, b \in S$,

\begin{itemize}
    \item $\Phi$ is additive, i.e.~$\Phi(a+b)=\Phi(a)+\Phi(b)$,
    \item $\Phi$ is multiplicative, i.e.~$\Phi(ab)=\Phi(a) \cdot \Phi(b)$, 
    \item $\Phi$ is normalized, i.e.~$\Phi(0)=0$ and $\Phi(1)=1$,
    \item and $\Phi$ is monotone, i.e.~$a \rleq b\implies \Phi(a) \rleq \Phi(b)$.
\end{itemize}
\end{definition}

\noindent As we will see, the last condition specializes to that of Schur convexity in the case of the majorization semiring.

Finally, when $S$ is a preordered semiring, we say that $x,y\in S$ are {\it ordered in large samples} if
\begin{align}\label{eq:asymptotic}
x^n\rleq y^n 
\end{align}
for all sufficiently large $n\in\N$. Moreover, $x$ and $y$ are {\it catalytically ordered} if there is non-zero $a\in S$ such that
\begin{align}\label{eq:catalytic}
   xa \rleq ya.
\end{align}

Now that we have the basic tools of the theory, we state the separation theorems for preordered semirings formally known as \emph{Vergleichsstellens\"atze} in~\cite{Fritz2021a,Fritz2021b}. Roughly speaking, these separation theorems provide us with criteria for deciding whether two given elements are ordered in large samples or catalytically. We present the main theorems in a slightly shorter form with only the results relevant to our present applications.

\begin{theorem}[Theorem 7.15 in \cite{Fritz2021a}]\label{thm:Fritz7.15}
Let $S$ be a zerosumfree preordered semiring of polynomial growth with a power universal element $u$ and such that $0 \rleq 1$. Let $x,y\in S$ be such that $x \neq 0$ and $y$ is power universal. If, for every $\mb K\in\{\R_+,\T\R_+\}$ and every monotone homomorphism $\Phi:S\to\mb K$ such that $\Phi(u)>1$, we have $\Phi(x) < \Phi(y)$, then
\begin{enumerate}[label=(\alph*)]
\item $x^n \rleq y^n$ for all sufficiently large $n\in\N$, and
\item $ax\rleq ay$ for a catalyst of the form $a=\sum_{\ell=0}^n x^\ell y^{n-\ell}$ for all sufficiently large $n\in\N$.
\end{enumerate}
\end{theorem}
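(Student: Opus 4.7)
The plan is to prove the theorem by a real-algebraic separation argument in the spirit of a Positivstellensatz or Kadison--Dubois theorem. The guiding principle is that the monotone homomorphisms $\Phi\colon S\to \mb K$ play the role of ``characters'' on the spectrum of $S$, and a strict inequality holding at every such character should translate into an algebraic inequality in $S$, up to taking high powers (the asymptotic statement) or multiplying by a catalyst (the catalytic statement).

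First I would reformulate the target conclusions into a form amenable to convex-analytic methods. Using the power universal $u$ to rescale, one may associate to any pair $(x,y)$ a ``moment-set'' living in a rational cone that records which monomials in $x$, $y$, $u$ dominate which others under $\rleq$. Polynomial growth, combined with zerosumfreeness and $0\rleq 1$, ensures this set carries enough convex structure for separation arguments, and its closure in a suitable real topology can be examined for obstructions to $x^n\rleq y^n$. The point is that each candidate obstruction corresponds to a linear functional, and hence (after multiplicativity is imposed) to a homomorphism into one of the two candidate target semirings.

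Second, I would apply a Hahn--Banach-style separation. If $x^n\not\rleq y^n$ held for infinitely many $n$, then a persistent rational obstruction to the asymptotic ordering would live in the convex hull of the moment-set; passing to a limit produces a monotone homomorphism $\Phi\colon S\to \mb K$ with $\Phi(u)>1$ and $\Phi(x)\geq\Phi(y)$, contradicting the hypothesis. Crucially the target $\mb K$ may be either $\R_+$ or $\T\R_+$: archimedean directions in the spectrum produce real-valued characters, while non-archimedean directions\,---\,where a single monomial in $x$, $y$, $u$ dominates exponentially\,---\,produce tropical characters, and missing either kind would leave the separation incomplete. For the catalytic part (b), one combines the asymptotic result with the semiring telescoping identity
\begin{equation*}
    a y + x^{n+1} \;=\; a x + y^{n+1}, \qquad a\;=\;\sum_{\ell=0}^{n} x^\ell y^{n-\ell},
\end{equation*}
which is the positive shadow of the ring identity $a(y-x)=y^{n+1}-x^{n+1}$. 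Together with $x^{n+1}\rleq y^{n+1}$ from (a) and the observation that $ay$ already dominates $y^{n+1}$ (via the $\ell=0$ summand and $0\rleq 1$), a careful cancellation argument yields $ax\rleq ay$.

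The hard part will be executing the separation so that it returns a homomorphism into the correct target semiring. The tropical case is the delicate one: such characters arise as limits in which one coordinate of the spectrum escapes to infinity, and capturing them requires compactifying the spectrum in a way compatible with the semiring structure\,---\,in particular, with its polynomial growth and the behaviour of $u$. This compactification is the technical innovation at the heart of the \emph{Vergleichsstellensatz} machinery in~\cite{Fritz2021a,Fritz2021b}; once it is in place, the asymptotic and catalytic conclusions follow along the lines sketched above.
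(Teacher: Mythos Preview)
The paper does not prove this statement: Theorem~\ref{thm:Fritz7.15} is quoted from~\cite{Fritz2021a} as a black box and is used throughout Sections~\ref{sec:classicalResults} and~\ref{sec:MatrixMaj} without any argument for it being supplied. There is therefore no ``paper's own proof'' to compare your proposal against.

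That said, a few comments on your sketch. Your outline for part~(a) is reasonable in spirit---the result is indeed a separation theorem, and the dichotomy between archimedean and non-archimedean directions is exactly what produces the two target semifields $\R_+$ and $\T\R_+$---but what you have written is a description of the shape of the argument rather than a proof. The actual execution in~\cite{Fritz2021a} is substantial: one has to build the right preordered semifield of fractions, pass to a suitable real closure, and identify the resulting spectrum; none of this is visible in your ``moment-set'' paragraph.

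For part~(b), your telescoping identity is correct, but your framing in terms of ``careful cancellation'' is misleading and suggests you have not actually checked the step. No cancellation is needed: with $a=\sum_{\ell=0}^{n}x^{\ell}y^{n-\ell}$ and $s:=\sum_{\ell=1}^{n}x^{\ell}y^{n+1-\ell}$ one has $ax=x^{n+1}+s$ and $ay=y^{n+1}+s$ directly, so $x^{n+1}\rleq y^{n+1}$ gives $ax\rleq ay$ by adding $s$ to both sides. This is exactly the standard trick recalled in Remark~\ref{rem:asymptocatalytic} of the paper (attributed there to~\cite{DuFeLiYi2005}); your reference to needing that ``$ay$ already dominates $y^{n+1}$'' plays no role.
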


\begin{remark}
	\label{tropical_normalization}
	For a tropical homomorphism $\Phi : S \to \T\R_+$ as in the theorem, every power $\Phi(\cdot)^\lambda$ for $\lambda > 0$ is again such a tropical homomorphism, and the inequalities $\Phi(x)^\lambda \le \Phi(y)^\lambda$ hold if and only if the original inequalities $\Phi(x) \le \Phi(y)$ do. This shows that one can restrict to those tropical homomorphisms that satisfy the normalization condition $\log \Phi(u) = 1$.
\end{remark}

If instead $1 \rleq 0$ in $S$, then applying the theorem to the opposite preorder $S^{\mathrm{op}}$ shows that the large-sample and catalytic orders can be characterized in terms of monotone homomorphisms to $\mb K\in\{\R_+^{\rm op},\T\R_+^{\rm op}\}$. This will be helpful in our derivation for the submajorization semiring in the appendices. Although the submajorization semiring discussed later has $0\rleq 1$, it does not have a power universal. The oppositely ordered submajorization semiring has a power universal as required by Theorem~\ref{thm:Fritz7.15}, but there $0$ majorizes $1$.

Theorem~\ref{thm:Fritz7.15} requires $0$ and $1$ to be comparable which restricts its applicability in, e.g.,\ information-theoretic settings. In some cases, e.g.,\ submajorization discussed in the appendices, we do have $0\rleq 1$. This is why we can (re)derive results on approximate large-sample and catalytic majorization of probability vectors using Theorem~\ref{thm:Fritz7.15} as a stepping stone in Section \ref{sec:AuNe}. However, typically in probability and information theory there are naturally occurring preordered semirings where neither $1 \rgeq 0$ nor $1 \rleq 0$ holds. This is often because probability measures are normalized by definition to $1$, and we only want measures of the same normalization to be comparable. In particular, the zero measure will not be comparable to any normalized measure, and this results in $1 \not\rleq 0$ and $1 \not\rgeq 0$. In light of this, we state an additional separation theorem proven in \cite{Fritz2021b} which, in order to account for the incomparability of $1$ and $0$, requires the additional consideration of monotone homomorphisms to $\R_{+}^{\rm op}$ and $\T\R_{+}^{\rm op}$ as well as {\it monotone derivations} that account for certain infinitesimal information.

Before stating the theorem, we briefly introduce the additionally relevant concepts. Let $S$ and $T$ be preordered semirings. We say that a monotone homomorphism $\Phi : S \to T$ is {\it degenerate} if it factors through $S / \!\sim$, or equivalently if for all $x, y \in S$,
\begin{equation}
	x \rleq y \quad\Longrightarrow\quad \Phi(x) = \Phi(y).
\end{equation}
Otherwise $\Phi$ is {\it nondegenerate}. To state the theorem we need one more relevant concept. Let $S$ be a semiring and $\Phi : S \to \R_+$ a homomorphism. Then a {\it $\Phi$-derivation} is an additive map $\Delta : S \to \R$ such that the {\it Leibniz rule}
\begin{equation}
	\Delta(xy) = \Phi(x) \Delta(y) + \Delta(x) \Phi(y)
\end{equation}
holds for all $x, y \in S$.

The next separation theorem applies quite generally, but lets us conclude catalytic orderings only. Let us first make a couple of technical definitions. We say that a (preordered) semiring $S$ has {\it quasi-complements} if, for every $x\in S$, there are $n\in\N$ and $y\in S$ such that $x+y=n$ (where $n$ is the $n$-fold sum of the unit element). If $S$ is also a semidomain, it has {\it quasi-inverses} if, for every non-zero $x\in S$, there are $n\in\N$ and $y\in S$ such that $xy=n$. Let $S$ be a preordered semiring and $\sim$ be the equivalence relation generated by the preorder through sequences with zigzag ordering as earlier. We denote the set of formal fractions over $S/\!\sim$ by $\ms{Frac}(S/\!\sim)$. Introducing additive inverses to this set corresponds to considering the tensor product $\ms{Frac}(S/\!\sim)\otimes\Z$.

\begin{theorem}[Theorem 7.1 in \cite{Fritz2021b}]\label{thm:Fritz7.1}
	Let $S$ be a zerosumfree preordered semidomain of polynomial growth with a power universal pair $(u_-,u_+)$. Also assume that $S/\!\sim$ has quasi-complements and quasi-inverses and that $\ms{Frac}(S/\!\sim)\otimes\Z$ is a finite product of fields. Let nonzero $x,y\in S$ with $x\sim y$ be given. Suppose that, for every $\mb K\in\{\R_+,\R_+^{\rm op},\T\R_+,\T\R_+^{\rm op}\}$ and every monotone homomorphism $\Phi:S\to\mb K$ with trivial kernel, we have:
\begin{enumerate}[label=(\roman*)]
\item If $\Phi$ is nondegenerate, then $\Phi(x)<\Phi(y)$.
\item If $\mb K=\R_+$ and $\Phi$ is degenerate, then $\Delta(x)<\Delta(y)$ for every nonzero monotone $\Phi$-derivation $\Delta$ with $\Delta(u_+)= \Delta(u_-)+1$.
\end{enumerate}
Then there is non-zero $a\in S$ such that $ax \rleq ay$. Conversely, if such $a$ exists, then the above inequalities hold non-strictly.
\end{theorem}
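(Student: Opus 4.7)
The plan is to establish this Vergleichsstellensatz by a Zorn-style contrapositive argument, analogous to the proof of Theorem~\ref{thm:Fritz7.15} but more delicate because neither $0 \rleq 1$ nor $1 \rleq 0$ is assumed in $S$. I would first define the \emph{catalytic preorder} on $S$ by declaring $x \preceq_c y$ iff there exists nonzero $a \in S$ with $ax \rleq ay$. Reflexivity follows by taking $a = 1$, and transitivity uses the semidomain hypothesis: if $ax \rleq ay$ and $by \rleq bz$ with $a, b \neq 0$, then $abx \rleq aby \rleq abz$ and $ab \neq 0$. Thus $\preceq_c$ is a well-defined preorder refining $\rleq$, and the goal becomes to show $x \preceq_c y$.

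The easy (converse) direction follows from standard manipulations: if $ax \rleq ay$ with $a \neq 0$, then for any monotone homomorphism $\Phi : S \to \mb K$ with trivial kernel, monotonicity and multiplicativity give $\Phi(a)\Phi(x) \le \Phi(a)\Phi(y)$ in $\mb K$, and $\Phi(a) \neq 0$ permits cancellation to yield $\Phi(x) \le \Phi(y)$. For a monotone $\Phi$-derivation $\Delta$ with $\Phi$ degenerate, the assumption $x \sim y$ gives $\Phi(x) = \Phi(y)$; the monotonicity of $\Delta$ together with the Leibniz rule then forces $\Phi(a)(\Delta(y) - \Delta(x)) \ge 0$, whence $\Delta(x) \le \Delta(y)$.

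For the main (sufficiency) direction I would argue by contradiction: suppose $x \not\preceq_c y$. By Zorn's lemma, extend $\rleq$ to a semiring preorder $\rleq^*$ on $S$, still of polynomial growth with the same $(u_-, u_+)$, maximal among those preserving $x \not\preceq_c y$. The maximality of $\rleq^*$ forces a ``prime-like'' structure on the quotient: using the quasi-complements, quasi-inverses, and semidomain hypotheses, the equivalence classes under the induced equivalence relation embed into a totally ordered quotient. The hypothesis that $\ms{Frac}(S/\!\sim) \otimes \Z$ is a finite product of fields then lets me project onto a single factor to obtain a totally ordered field $F$ together with a compatible map $S \to F$ respecting $\rleq^*$.

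The crucial step is to convert this data into the monotone homomorphism or derivation that will contradict the hypothesis. If the images of $x$ and $y$ in $F$ differ, then composing with an appropriate order-preserving map $F \to \R$ or, via a non-archimedean valuation of $F$, a tropical analogue, yields a monotone homomorphism $\Phi$ to one of $\R_+, \R_+^{\rm op}, \T\R_+, \T\R_+^{\rm op}$ violating the assumed $\Phi(x) < \Phi(y)$. If the images coincide (the degenerate case), one must instead extract a nonzero monotone $\Phi$-derivation $\Delta$ from the infinitesimal structure of $F$: the logarithmic derivative along a one-parameter family of homomorphisms tangent to $\Phi$ produces such a $\Delta$, normalizable to $\Delta(u_+) - \Delta(u_-) = 1$ using the power universal pair. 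I expect this last step to be the main obstacle: identifying the tangent directions in the space of homomorphisms that correspond to genuine monotone $\Phi$-derivations, and verifying their monotonicity and nonvanishing, is the technical heart of the argument, even though the algebraic manipulations themselves are routine once the tangent-vector picture is in place.
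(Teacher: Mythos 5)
First, a point of comparison: the paper does not prove this statement at all. It is imported verbatim as Theorem~7.1 of~\cite{Fritz2021b}, so there is no in-paper argument to measure your proposal against; what you are attempting is a reconstruction of a substantial external theorem, whose actual proof proceeds through a long chain of reductions (localization to preordered semifields via the quasi-complement/quasi-inverse and $\ms{Frac}(S/\!\sim)\otimes\Z$ hypotheses, a classification of totally preordered and multiplicatively Archimedean semifields into the four target types, and a separate, delicate treatment of derivations at degenerate homomorphisms), not through a single Zorn-style maximal extension of the preorder.

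As a proof, your proposal has genuine gaps at exactly the load-bearing points. The converse direction and the reduction to the catalytic preorder $\preceq_c$ are fine. But in the main direction, (1) the claim that a maximal extension $\rleq^*$ preserving $x\not\preceq_c y$ yields a \emph{totally} ordered quotient embedding into an ordered field $F$ is asserted, not argued; maximality of a preorder does not by itself give totality or a field structure, and this is precisely where the quasi-complement, quasi-inverse, and fraction-field hypotheses must be deployed in a specific, nontrivial way. (2) Even granting $F$, producing from it a monotone homomorphism into one of $\R_+,\R_+^{\rm op},\T\R_+,\T\R_+^{\rm op}$ requires a Hölder-type classification result plus the polynomial-growth hypothesis to rule out other order types, and you must additionally ensure the resulting $\Phi$ has \emph{trivial kernel}, since the hypotheses only constrain such $\Phi$; your sketch never addresses this. (3) Most importantly, the degenerate case, i.e.\ extracting a nonzero monotone $\Phi$-derivation $\Delta$ with the normalization $\Delta(u_+)=\Delta(u_-)+1$ from ``infinitesimal structure'' or a ``tangent family of homomorphisms,'' is exactly the technical heart of the theorem, and you explicitly leave it open. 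Acknowledging an obstacle does not discharge it: without a construction of $\Delta$, together with proofs of its additivity, Leibniz rule, monotonicity, and nonvanishing, the contradiction in the degenerate case is not obtained, and the argument does not close.
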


To characterize ordering in large samples as well, we need some further conditions on the preordered semiring involved. It can be shown \cite{Fritz2021b} that the conditions listed in the theorem below imply the conditions required in Theorem \ref{thm:Fritz7.1}.

\begin{theorem}[Theorem 8.6 in \cite{Fritz2021b}]\label{thm:Fritz8.6}
Let $S$ be a zerosumfree preordered semidomain with a power universal element $u$. Assume that for some $d\in\N$ there is a surjective homomorphism $\|\cdot\|:S\to\R_{>0}^d\cup\{(0,\ldots,0)\}$ with trivial kernel and such that
\begin{equation}
a\rleq b\ \Rightarrow\ \|a\|=\|b\|\quad {\rm and} \quad \|a\|=\|b\|\ \Rightarrow\ a\sim b.
\end{equation}
Denote the component homomorphisms of $\|\cdot\|$ by $\|\cdot\|_{(j)}$, $j=1,\ldots,d$. Let $x,y\in S\setminus\{0\}$ with $\|x\|=\|y\|$ and where $y$ is power universal. If,
\begin{enumerate}[label=(\roman*)]
	\item for every $\mb K\in\{\R_+,\R_+^{\rm op},\T\R_+,\T\R_+^{\rm op}\}$ and every nondegenerate monotone homomorphism $\Phi : S \to \mb K$ with trivial kernel, we have $\Phi(x) < \Phi(y)$ and
	\item $\Delta(x) < \Delta(y)$ for every monotone $\|\cdot\|_{(j)}$-derivation $\Delta : S \to \R$ with $\Delta(u) = 1$ for all component indices $j = 1,\ldots,d$,
\end{enumerate}
then
\begin{enumerate}[label=(\alph*)]
\item $x^n \rleq y^n$ for all sufficiently large $n\in\N$, and
\item $ax\rleq ay$ for a catalyst of the form $a=\sum_{\ell=0}^n x^\ell y^{n-\ell}$ for all sufficiently large $n\in\N$.
\end{enumerate}
Conversely if either of these properties holds for any $n$ or $a$, then the above inequalities hold non-strictly.
\end{theorem}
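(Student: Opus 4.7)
My plan would be to combine the asymptotic techniques underlying Theorem~\ref{thm:Fritz7.15} with the more delicate structural use of derivations from Theorem~\ref{thm:Fritz7.1}, with the multi-component norm $\|\cdot\|$ playing the role of a grading that compensates for the absence of the comparability between $0$ and $1$. Since $\|x\|=\|y\|$ and $\|\cdot\|$ has trivial kernel, the elements $x,y$ live inside a common equivalence class $S_{\|x\|}$, and the hypotheses~(i)--(ii) supply exactly the separation data needed to analyse the induced preorder on that class.

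The first step would be to localize, passing from $S$ to its semifield of fractions (using zerosumfreeness and polynomial growth, together with the structure supplied by $\|\cdot\|$) and then restricting attention to fractions of trivial norm. On this localized domain, the preorder is amenable to a Vergleichsstellensatz-type argument analogous to that behind Theorem~\ref{thm:Fritz7.15}: the real and tropical monotone homomorphisms, together with their opposite-preorder variants, separate pairs of elements at ``finite'' and ``infinite'' scales, while the $\|\cdot\|_j$-derivations control the infinitesimal behavior along tangent directions where all nondegenerate homomorphisms coincide. Concretely, I would study the convex cone generated by relations $s\rleq t$ inside an appropriate vector space of formal $\log$-coordinates and use a Hahn--Banach or Minkowski-functional argument to conclude from the strict inequalities in (i) and (ii) that $x^n\rleq y^n$ for all sufficiently large $n$, establishing conclusion (a). I expect this step to be the main obstacle, since one must verify that the four families of monotone homomorphisms plus the component derivations exhaust all possible ``separating linear functionals'' on the relevant cone --- essentially a tangent-space completeness statement at every degenerate direction.

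Once (a) is in hand, conclusion (b) is purely algebraic. Setting $a = \sum_{\ell=0}^n x^\ell y^{n-\ell}$ and $C = \sum_{m=1}^n x^m y^{n+1-m}$, a short re-indexing of the defining sum yields
\begin{equation}
xa \;=\; x^{n+1} + C, \qquad ya \;=\; y^{n+1} + C.
\end{equation}
The asymptotic inequality $x^{n+1}\rleq y^{n+1}$ from part~(a) together with compatibility of $\rleq$ with addition then immediately gives $xa\rleq ya$ with $a$ of the required telescoping form.

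Finally, the converse implications (that the inequalities in (i) and (ii) hold non-strictly whenever $x^n\rleq y^n$ or $ax\rleq ay$) are routine: applying any nondegenerate monotone homomorphism $\Phi$ to $x^n\rleq y^n$ gives $\Phi(x)^n\le\Phi(y)^n$ and hence $\Phi(x)\le\Phi(y)$, and for a monotone $\|\cdot\|_j$-derivation $\Delta$ the Leibniz rule applied to $ax\rleq ay$, combined with $\|x\|_j=\|y\|_j$, collapses to $\|a\|_j\,\Delta(x)\le\|a\|_j\,\Delta(y)$, yielding $\Delta(x)\le\Delta(y)$.
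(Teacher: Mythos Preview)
The paper does not contain its own proof of this statement: Theorem~\ref{thm:Fritz8.6} is quoted verbatim as an external result (Theorem~8.6 in \cite{Fritz2021b}) and is used as a black box throughout. There is therefore nothing in the present paper to compare your proposal against; the actual proof lives in the cited reference and involves the full machinery of preordered semifields, their spectral theory, and the ``test spectrum'' developed there.

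That said, your outline is broadly in the right spirit. The telescoping identity you wrote for part~(b) is exactly the standard trick (it also appears in the paper as Remark~\ref{rem:asymptocatalytic}), and your converse argument is correct. Your sketch of part~(a) correctly identifies the core difficulty --- showing that the listed homomorphisms and derivations exhaust the relevant separating functionals --- but this is precisely the deep content of \cite{Fritz2021b}, and it requires substantially more than a Hahn--Banach argument on a single cone: one needs a classification of monotone homomorphisms on the associated semifield, a compactness result for the spectrum, and a careful treatment of the degenerate (``tangent'') directions via derivations. Your proposal flags the right obstacle but does not indicate how to overcome it; the actual proof is not reproducible from the hints in the present paper alone.
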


\begin{remark}
	We actually do not have to consider all derivations above, but it is enough to focus on representative derivations whose differences do not factor through $\sim$-equivalence, i.e.,\ when the derivations are not {\it interchangeable}~\cite{Fritz2021b}. Remark~\ref{tropical_normalization} on the normalization of tropical homomorphisms also still applies, and likewise the homomorphisms $S \to \T\R_+^{\rm op}$ can be normalized to $\log \Phi(u) = -1$.
\end{remark}

The intended applications of the theory presented here deal with questions of transformability of one finite statistical experiment (typically described by a matrix) to another in large samples or catalytically. This comparison of experiments gives the preorder we study for the majority of this work. In this framework, the monotone homomorphisms and derivations giving conditions for the large-sample and catalytic transformability in the above separation theorems can be interpreted as monotones quantifying the information content in an experiment. Later we will briefly discuss the applications to quantum theory where these maps should be thought of as resource monotones for quantum state resources.

\section{Matrix majorization}\label{sec:MatrixMaj}

We now turn our attention to the comparison of tuples of probability vectors. Given two tuples of probability vectors $(p^{(1)}, \dots, p^{(d)})$ and $(q^{(1)}, \dots, q^{(d)})$, our main question is to determine whether, for $n\in\N$ large enough, there is a stochastic map $T$ such that
\begin{equation}
T\left(p^{(k)}\right)^{\otimes n}=\left(q^{(k)}\right)^{\otimes n} \qquad \forall k = 1, \ldots, d,
\end{equation}
which is \emph{matrix majorization in large samples}. We also address the related question of \emph{catalytic matrix majorization}. We study these questions by constructing a suitable preordered semiring, studying its monotone homomorphisms into $\mb K\in\{\R_+,\R_+^{\rm op},\T\R_+,\T\R_+^{\rm op}\}$ as well as monotone derivations on it, and then applying the separation theorem \ref{thm:Fritz8.6}.

Let us start by introducing some notation that we will use throughout this section. We fix $d\in\N$ and denote, for all $n\in\N$, the set of $(n\times d)$-matrices with non-negative entries and columns with a common support by $\mc V_n^d$. We identify a matrix $P\in\mc V_n^d$ with the associated tuple of columns $P = (p^{(1)},\ldots,p^{(d)})$, and we generally use the corresponding lower-case symbol for these columns. In terms of this, every $P=(p^{(1)},\ldots,p^{(d)})\in\mc V^d_n$ satisfies
\begin{align}
{\rm supp}\,p^{(1)}&=\cdots={\rm supp}\,p^{(d)}.
\end{align}
We also define
\begin{equation}\label{eq:PositiveCone}
\mc V^d_{<\infty}:=\bigcup_{n=1}^\infty \mc V^d_n.
\end{equation}
Let us define two binary operations on $\mc V^d_{<\infty}$: For $P,Q\in\mc V^d_{<\infty}$, we define a new matrix given by stacking $P$ and $Q$ on top of each other,
\begin{equation}
P\boxplus Q=\left(\begin{array}{c}
P\\
\hline
Q
\end{array}\right).
\end{equation}
In terms of our tuple notation above, this is equivalent to
\begin{equation}
	P\boxplus Q=(p^{(1)}\oplus q^{(1)},\ldots,p^{(d)}\oplus q^{(d)}).
\end{equation}
We similarly define $P\boxtimes Q$ as the matrix whose columns are the Kronecker products of the columns of $P$ and $Q$,
\begin{equation}
	P\boxtimes Q=(p^{(1)}\otimes q^{(1)},\ldots,p^{(d)}\otimes q^{(d)}),
\end{equation}
where the ordering of rows will be irrelevant modulo the equivalence $\approx$ introduced below.

Let us also denote, for every $n\in\N$, by $0_{n\times d}$ the $(n\times d)$-matrix whose entries are all zero. For $P,Q\in\mc V^d_{<\infty}$ where $P\in\mc V_m^d$ and $Q\in\mc V_n^d$, we write $P\approx Q$ if there are $t\geq m,n$ and a $(t\times t)$-permutation matrix $\Pi$ such that
\begin{equation}
\Pi(P\boxplus 0_{(t-m)\times d})=Q\boxplus 0_{(t-n)\times d}.
\end{equation}
This means that, modulo padding by all-zero rows, $Q$ is obtained from $P$ by a row permutation. We denote the equivalence classes in $\mc V^d_{<\infty}/\!\approx$ by $[P]$ for all $P\in\mc V^d_{<\infty}$, i.e.,\ $[P]:=\{Q\in\mc V^d_{<\infty}\,|\,Q\approx P\}$. In the sequel, we are only interested in the $\approx$-equivalence classes of the matrices under consideration and usually, when referring to a matrix $P\in\mc V^d_{<\infty}$, we actually refer to the equivalence class $[P]$. This also allows us to assume that, given $P,Q\in\mc V^d_{<\infty}$, $P$ and $Q$ are of the same height, i.e.,\ $P,Q\in\mc V^d_n$ for some large enough $n\in\N$.

Let us define $0=[(0\,\cdots\,0)]$ and $1=[(1\,\cdots\,1)]$ as the equivalence classes of $(1\times d)$-row matrices, and note that addition and multiplication in $\mc V^d_{<\infty}/\!\approx$ in the form
\begin{equation}
[P]+[Q]=[P\boxplus Q],\qquad [P]\cdot[Q]=[P\boxtimes Q]
\end{equation}
are well-defined. We define the preorder $\rleq$ on $\mc V^d_{<\infty}/\!\approx$ by declaring that $[Q]\rleq[P]$ if there is a column-stochastic matrix $T$ such that $Q=TP$. In this case we also denote $P\succeq Q$ or $Q\preceq P$ and say that $P$ majorizes $Q$. In terms of the columns, this means $q^{(k)}=Tp^{(k)}$ for all $k=1,\ldots,d$. One may easily check that $(\mc V^d_{<\infty}/\!\approx,0,1,+,\cdot,\succeq)$ is a preordered semiring and we make the following definition:

\begin{definition}
We denote the preordered semiring $(\mc V^d_{<\infty}/\!\approx,0,1,+,\cdot,\succeq)$ by $S^d$ and call it the {\it matrix majorization semiring (of length $d$)}.
\end{definition}

\begin{remark}
For algebraically minded readers, the following generalization of Remark~\ref{S_group_semiring} may be helpful: as a plain semiring, $S^d$ coincides with the group semiring $\N[\R_{>0}^d]$, since writing every matrix as the $\boxplus$-sum of its rows show that the semiring elements can be identified with formal sums of elements of the multiplicative group $\R_{>0}^d$, and this identification is such that the usual addition and multiplication of formal sums correspond to the operations introduced above.
\end{remark}

Recall from Section~\ref{sec:preorderedSemirings} that $\sim$ denotes the equivalence relation generated by $\rleq$, i.e.,\ $[P]\sim[Q]$ (or, simply, $P\sim Q$) if there are $n\in\N$ and $R_1,\ldots,R_n\in\mc V^d_{<\infty}$ such that
\begin{equation}
P\preceq R_1\succeq R_2\preceq\cdots\succeq R_n\preceq Q.
\end{equation}
In fact, the above sequence $R_1,\ldots,R_n$ can be chosen to be of length $1$. To see this, note that such a chain of inequalities implies $\|p^{(k)}\|_1=\|q^{(k)}\|_1$ for all column indices $k=1,\ldots,d$, since multiplying by a stochastic matrix does not change the normalization of any column. But then we also obtain 
\begin{equation}
P\succeq R\preceq Q,
\end{equation}
where $R:=\big(\|p^{(1)}\|_1,\ldots,\|p^{(d)}\|_1\big)$ has just a single row. From this we also see that
\begin{equation}
S^d / \!\sim\ \cong \R_{>0}^d\cup\{(0,\ldots,0)\},
\end{equation}
which is a semiring with quasi-complements and quasi-inverses and such that $\ms{Frac}(S^d / \!\sim) \otimes \Z \cong \R^d$ is a finite product of fields. Defining the degenerate homomorphism $\|\cdot\|:S^d\to\R_{>0}^d\cup\{(0,\ldots,0)\}$ by
\begin{equation}
\|P\| := \big(\|p^{(1)}\|_1,\ldots,\|p^{(d)}\|_1\big),
\end{equation}
we conclude
\begin{equation}
P\succeq Q\ \Rightarrow\ \|P\|=\|Q\| \quad {\rm and}\quad \|P\|=\|Q\|\ \Rightarrow\ P\sim Q.
\end{equation}
Therefore we are in a situation where the auxiliary assumptions of Theorem \ref{thm:Fritz8.6} hold.

\begin{lemma}
	\label{lem:MatrPowUniv}
	The matrix majorization semiring is of polynomial growth: any stochastic matrix $P=(p^{(1)},\ldots,p^{(d)})$ with $p^{(k)}\neq p^{(\ell)}$ for $k\neq\ell$ is a power universal.
\end{lemma}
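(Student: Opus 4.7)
The plan is to verify that $P$ serves as a power universal for $S^d$ paired with $u_- = 1$; since $1$ is its own multiplicative inverse, this reduces to showing (a) $1 \preceq P$ and (b) for every nonzero $X \preceq Y$ in $S^d$ there is $k \in \N$ with $Y \preceq X \boxtimes P^{\boxtimes k}$. Condition (a) is immediate: the $1 \times m$ row of ones is column-stochastic and maps $P$ to the $1 \times d$ row of ones, which is the unit $1 \in S^d$.

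For condition (b), fix nonzero $X \preceq Y$ witnessed by a column-stochastic $T_0$ with $X = T_0 Y$. The inequality $X \preceq Y$ forces $\|X\| = \|Y\|$ as tuples of column $\|\cdot\|_1$-norms, so by rescaling the $\ell$-th column of both $X$ and $Y$ by the same constant $1/\|x^{(\ell)}\|_1$ (which preserves the majorization relation to be proved), I may assume both matrices are column-stochastic of common shape $n \times d$. I then construct a column-stochastic $T$ with $T(X \boxtimes P^{\boxtimes k}) = Y$. The strategy exploits the distinctness of the columns of $P$: since the pairwise Kullback--Leibler divergences $D(p^{(\ell)} \| p^{(\ell')})$ are all strictly positive, Sanov's theorem yields, for all sufficiently large $k$, pairwise disjoint ``typical sets'' $A_1, \ldots, A_d \subseteq [m]^k$ such that $(p^{(\ell)})^{\otimes k}(A_\ell) \to 1$ and $(p^{(\ell')})^{\otimes k}(A_\ell)$ decays exponentially in $k$ for $\ell' \neq \ell$.

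Using these sets I will build $T$ in two layers. The blueprint assigns $T_{\cdot,(i,\mathbf{j})} := y^{(\ell)}$ on rows $(i,\mathbf{j})$ with $\mathbf{j} \in A_\ell$, and a fixed strictly positive default column (for instance the uniform distribution on the common support of the $y^{(\ell)}$) on rows with $\mathbf{j}$ in the atypical residual $B := [m]^k \setminus \bigcup_\ell A_\ell$. A direct computation then yields $T(X \boxtimes P^{\boxtimes k}) = Y + \Lambda$, where the leakage matrix $\Lambda$ has exponentially small $\|\cdot\|_1$-norm in each column and zero column sums. To obtain exact equality I perturb $T$ on the atypical rows by a correction $\Delta T$ supported on $[n] \times B$ that solves the linear system $\sum_{(i,\mathbf{j}) \in [n] \times B} \Delta T_{\cdot,(i,\mathbf{j})}\, x^{(\ell')}_i (p^{(\ell')})^{\otimes k}_{\mathbf{j}} = -\Lambda_{\cdot,\ell'}$ for every $\ell'$, subject to the linear constraints $\sum_a \Delta T_{a,(i,\mathbf{j})} = 0$ and the inequality $T_{a,(i,\mathbf{j})} + \Delta T_{a,(i,\mathbf{j})} \geq 0$.

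The main obstacle is certifying that this correction system is feasible within the column-stochastic nonnegative cone. Feasibility holds because there are only $nd$ image-matching equations against $n(n-1)|B|$ effective free variables (growing as $m^k$), the required correction magnitude is exponentially small in $k$, and the choice of a strictly positive default column ensures that $T_{a,(i,\mathbf{j})}$ has uniformly positive slack on the atypical rows. Consequently the correction fits inside the feasibility polytope for all sufficiently large $k$, giving $Y \preceq X \boxtimes P^{\boxtimes k}$ and establishing polynomial growth of $S^d$.
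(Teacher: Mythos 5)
There is a genuine gap, and it sits exactly at the step you flag as ``the main obstacle'': the feasibility of the correction $\Delta T$. Your perturbation variables live only on the atypical rows $(i,\mathbf{j})$ with $\mathbf{j}\in B$, and in the matching equations they are multiplied by the weights $x^{(\ell')}_i (p^{(\ell')})^{\otimes k}_{\mathbf{j}}$. Hence the total correction you can produce in column $\ell'$ is bounded in $\|\cdot\|_1$ by $2\,(p^{(\ell')})^{\otimes k}(B)$, since each column of $T$ can move by at most $2$ in total variation. On the other hand, the error you must cancel is
\begin{equation}
\Lambda_{\cdot,\ell'} \;=\; \sum_{\ell\neq\ell'} (p^{(\ell')})^{\otimes k}(A_\ell)\,\bigl(y^{(\ell)}-y^{(\ell')}\bigr) \;+\; (p^{(\ell')})^{\otimes k}(B)\,\bigl(u-y^{(\ell')}\bigr),
\end{equation}
whose norm is generically of order $\sum_{\ell\neq\ell'}(p^{(\ell')})^{\otimes k}(A_\ell)$. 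Nothing in your construction guarantees that the residual mass $(p^{(\ell')})^{\otimes k}(B)$ dominates the misclassification mass $\sum_{\ell\neq\ell'}(p^{(\ell')})^{\otimes k}(A_\ell)$ for every $\ell'$; with maximum-likelihood decision regions one even has $B=\emptyset$, and then no correction is possible at all. So ``both quantities are exponentially small'' is not enough: the knobs enter the equations with exponentially small coefficients, so per-entry slack of order $1/s$ on the default column and the count of $nd$ equations versus $n(n-1)|B|$ variables do not certify feasibility. Moreover, even if one tunes the $A_\ell$ (e.g.\ small typical balls, so that $B$ carries polynomially slower-decaying mass under every $p^{(\ell')}$) to fix the budget, a single $\Delta T$ must solve the $d$ vector equations \emph{simultaneously}, with a different weighting of the same variables for each $\ell'$; this coupled solvability is precisely what still needs a proof and is absent.

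For comparison, the paper avoids this patching problem altogether: after the same reduction to stochastic $X,Y$ it uses $X \succeq 1$ to drop $X$, so it suffices to show $P^{\boxtimes n}\succeq Y$ with $Y$ of full support; the hypothesis-testing regions are compressed into a $d\times d$ stochastic matrix $U^{(n)}$ with $P^{\boxtimes n}\succeq U^{(n)}$ and $U^{(n)}\to I_d$, and then one sets $T^{(n)}:=Y\,(U^{(n)})^{-1}$. This solves the coupled exact-equality system in one stroke, and nonnegativity of $T^{(n)}$ follows from the strict positivity of $Y$ together with $(U^{(n)})^{-1}\to I_d$. If you want to salvage your direct construction, you would need either such an inversion step or a concrete choice of $A_1,\dots,A_d$ and $B$ together with an explicit solution of the coupled correction system inside the stochastic polytope; as written, the argument does not go through.
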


\begin{proof}
To show that such $P$ is a power universal, we need to start with $R,Q\in\mc V^d_{<\infty}\setminus\{0\}$ such that $Q \succeq R$ and show that there is $n \in \N$ such that
\begin{equation}
	P^{\boxtimes n} \boxtimes R \succeq Q.
\end{equation}
By columnwise renormalization, we may assume that both $R$ and $Q$ are stochastic, and we also assume without loss of generality that all entries of $Q$ are strictly positive. If we prove that there is $n\in\N$ such that $P^{\boxtimes n}\succeq Q$, then we have
\begin{equation}
P^{\boxtimes n}\boxtimes R\succeq P^{\boxtimes n}\boxtimes(1\,\cdots\,1)=P^{\boxtimes n}\succeq Q,
\end{equation}
and we are done. Thus we will show that there is $n \in \N$ with $P^{\boxtimes n}\succeq Q$.

Using a result from multiple hypothesis testing~\cite{LeJo97}, there is $(\eps^{(n)}_{k,\ell})_{k,\ell=1,\ldots,d}$ such that
\begin{equation}
P^{\boxtimes n}\succeq U^{(n)}=\left(\begin{array}{cccc}
1-\varepsilon_{1,1}^{(n)}&\varepsilon_{1,2}^{(n)}&\cdots&\varepsilon_{1,d}^{(n)}\\
\varepsilon_{2,1}^{(n)}&1-\varepsilon_{2,2}^{(n)}&\cdots&\varepsilon_{2,d}^{(n)}\\
\vdots&\vdots&\ddots&\vdots\\
\varepsilon_{d,1}^{(n)}&\varepsilon_{d,2}^{(n)}&\cdots&1-\varepsilon_{d,d}^{(n)}
\end{array}\right)
\end{equation}
and such that all the errors $\varepsilon_{k,\ell}^{(n)}$ decay exponentially in $n$ at a rate given by the multiple Chernoff divergence of $P$,
\begin{equation}
C(P):=\min_{k,\ell: k\neq\ell}\max_{0\leq\alpha\leq1}(1-\alpha)D_\alpha(p^{(k)}\|p^{(\ell)}),
\end{equation}
where $D_\alpha(\cdot\|\cdot)$ are the R\'{e}nyi divergences presented later in\eqref{eq:Renyirelentr}. This quantity is strictly positive, which can be seen e.g.~by noting that the Bhattacharyya distance $D_{1/2}$ satisfies $D_{1/2}(p^{(k)}\|p^{(\ell)}) \neq 0$ as soon as $p^{(k)} \neq p^{(\ell)}$, which we have assumed for all $k \neq \ell$. Therefore we obtain $\eps^{(n)}_{k,\ell} \to 0$ as $n \to \infty$, and hence $U^{(n)}\to I_d$ as $n\to\infty$, where $I_d$ is the $(d\times d)$-identity matrix. In particular, $U^{(n)}$ is invertible for sufficiently large $n$, and we write $V^{(n)} := (U^{(n)})^{-1}$ for its inverse. Since the inverse of a matrix with normalized columns also has normalized columns\footnote{With $e = (1 \cdots 1)$, we have $e V^{(n)} = e U^{(n)} V^{(n)} = e$, where the first step uses that $U^{(n)}$ has normalized columns and the second that $V^{(n)}$ is the inverse of $U^{(n)}$.}, we know that this applies to $V^{(n)}$.

In order to show that $U^{(n)}\succeq Q$ for sufficiently large $n$, consider now the matrix
\begin{equation}
	T^{(n)} := Q V^{(n)}.
\end{equation}
Then $T^{(n)} U^{(n)} = Q$ holds by construction, and hence it is enough to show that $T^{(n)}$ is stochastic. But since we already know that it has normalized columns since both $V^{(n)}$ and $Q$ have\footnote{Use $e Q V^{(n)} = e V^{(n)} = e$.}, we only still need to prove that its entries are nonnegative. And this is the case since all entries of $Q$ are strictly positive, and $V^{(n)} \to I_d$ as $n \to \infty$ by continuity of matrix inversion and $U^{(n)} \to I_d$.
\end{proof}

Let us define an important family of real functions on $S^d$. With a slight abuse of notation, in order to avoid multiple parenthesis in our formulas, we actually treat these as functions on $\mc V^d_{<\infty}$ where individual matrices are identified with their $\approx$-equivalence class. For every $P \in\mc V^d_n\subset\mc V^d_{<\infty}$, where we omit zero-rows (as they do not contribute to these functions), we write
\begin{align}
	f_{\mathuline{\alpha}}(P)= {}&\sum_{i=1}^n\prod_{k=1}^d \left( p^{(k)}_i \right)^{\alpha_k},\label{eq:MatrMonHomR}\\
	f^\T_{\mathuline{\beta}}(P)= {}&\max_{1\leq i\leq n} \prod_{k=1}^d \left( p_i^{(k)} \right)^{\beta_k}, \label{eq:MatrMonHomTR}\\
	\Delta^{(k)}_{\mathuline{\gamma}}(P)= {}&\sum_{i=1}^n\sum_{\ell:\,\ell\neq k} \gamma_\ell \, p^{(k)}_i\log{\frac{p^{(k)}_i}{p^{(\ell)}_i}} \nonumber\\
	= {}&\sum_{\ell:\,\ell\neq k} \gamma_\ell D_1(p^{(k)}\|p^{(\ell)}),\label{eq:MatrDeriv}
\end{align}
for all tuples $\mathuline{\alpha}, \mathuline{\beta}, \mathuline{\gamma} \in \R^d$ and $k = 1, \dots, d$. Note that $D_1(\cdot\|\cdot)$ in \eqref{eq:MatrDeriv} is the Kullback-Leibler divergence. However, we will mostly be interested in more particular parameter ranges. Let us thus define the parameter sets
\begin{align}
A\  & := {}\{\mathuline{\alpha} \in \R^d \,|\,\alpha_1+\cdots+\alpha_d=1\}, \\
A_- & := {}\bigcup_{k=1}^d \: \left\{\mathuline{\alpha} \in A\,\middle|\,\alpha_k\geq 1 \:\land\: \alpha_\ell\leq0\;\: \forall \ell\neq k\right\}, \label{A-} \\
A_+ & := {}\left\{\mathuline{\alpha} \in A\,\middle|\,\alpha_\ell\geq0\;\: \forall \ell\right\},\\[4pt]
B\  & := {}\{\mathuline{\beta} \in \R^d \,|\,\beta_1+\cdots+\beta_d=0\}, \\
B_- & := {}\bigcup_{k=1}^d \: \left\{\mathuline{\beta} \in B\,\middle|\,\beta_k\geq 0 \:\land\: \beta_\ell\leq0\;\: \forall \ell\neq k\right\}. \label{B-}
\end{align}
For example, $A_-$ consists of all normalized tuples of numbers with exactly one positive entry, which by the normalization is necessarily $\ge 1$, while $A_+$ is the standard probability simplex in $\R^d$. The quantity $f_{\mathuline{\alpha}}(P)$ considered as a function of $\mathuline{\alpha} \in A_+$ is also known as the \emph{Hellinger transform}~\cite[Example~1.4.4]{Torgersen91}.

\subsection{Monotone homomorphisms and derivations on the matrix majorization semiring}

We next characterize the monotone homomorphisms on $S^d$ in terms of \eqref{eq:MatrMonHomR} and \eqref{eq:MatrMonHomTR}, writing $e_1, \dots, e_d \in \R^d$ for the standard basis vectors.

\begin{proposition}\label{prop:MatrMonHom}
The nondegenerate monotone homomorphisms $S^d\to\mb K$ are exactly the following:
\begin{enumerate}[label=(\roman*)]
\item For $\mb K=\R_+$, the maps $f_{\mathuline{\alpha}}$ for all tuples $\mathuline{\alpha}\in A_- \setminus \{e_1, \ldots, e_d\}$.
\item For $\mb K=\R_+^{\rm op}$, the maps $f_{\mathuline{\alpha}}$ for all tuples $\mathuline{\alpha}\in A_+ \setminus \{e_1, \ldots, e_d\}$.
\item For $\mb K=\T\R_+$, the maps $f^\T_{\mathuline{\beta}}$ for all tuples $\mathuline{\beta}\in B_- \setminus \{0\}$.
\item For $\mb K=\T\R_+^{\rm op}$, there is none.
\end{enumerate}
\end{proposition}

\begin{proof}
Let $\Phi:S^d\to\mb K$ be such a monotone homomorphism. In each case, let us define the functions $\fii_k:\R_{>0}\to\R_+$ for all $k=1,\ldots,d$ through
\begin{equation}
\fii_k(x)=\Phi(1\,\cdots\,1\,x\,1\,\cdots\,1),
\end{equation}
where the $x$ appears in the $k$-th position. One immediately sees that these functions are multiplicative, i.e.,\ $\fii_k(xy)=\fii_k(x)\fii_k(y)$ for all $x,y>0$ and $\fii_k(1) = 1$. We next show that they are power functions $x \mapsto x^{\alpha_k}$ for certain exponents $\alpha_k$. By the multiplicativity and the standard theory of the Cauchy functional equation, it is enough to show that they are bounded on some interval $[x,y]$ for $0<x<y$. Defining, for all $t\in[0,1]$, the matrices
\begin{align}
T_t&=\left(\begin{array}{cc}
t&1-t\\
1-t&t
\end{array}\right),\\
P_t&=\left(\begin{array}{ccccccc}
	1 & \cdots & 1 & tx+(1-t)y & 1 & \cdots & 1 \\
	1 & \cdots & 1 & (1-t)x+ty & 1 & \cdots & 1
\end{array}
\right),
\end{align}
we see that $T_t$ is stochastic and
\begin{equation}
P_{1/2} = T_{1/2} P_t\preceq P_t=T_t P_1\preceq P_1.
\end{equation}
Applying $\Phi$ to these inequalities, and using the fact that $P_t$ can be written as the sum of its two rows in $S^d$, we find that
\begin{align}
\textrm{case (i): } 2\fii_k\left(\frac{1}{2}(x+y)\right)\leq{}&\fii_k\big(tx+(1-t)y\big)+\fii_k\big((1-t)x+ty\big)\\
\leq{}&\fii_k(x)+\fii_k(y),\\
\textrm{case (ii): } 2\fii_k\left(\frac{1}{2}(x+y)\right)\geq{}&\fii_k\big(tx+(1-t)y\big)+\fii_k\big((1-t)x+ty\big)\\
\geq{}&\fii_k(x)+\fii_k(y),\\
\textrm{case (iii): } \fii_k\left(\frac{1}{2}(x+y)\right)\leq{}&\max_{s\in\{t,1-t\}}\fii_k\big(sx+(1-s)y\big)\\
\leq{}&\max\{\fii_k(x),\fii_k(y)\},\\
\textrm{case (iv): } \fii_k\left(\frac{1}{2}(x+y)\right)\geq{}&\max_{s\in\{t,1-t\}}\fii_k\big(sx+(1-s)y\big)\\
\geq{}&\max\{\fii_k(x),\fii_k(y)\}.
\end{align}
Since $\fii_k$ takes non-negative values, we now see that, for all $z\in[x,y]$,
\begin{align}
\textrm{case (i): }& 0\leq\fii_k(z)\leq\fii_k(x)+\fii_k(y),\\
\textrm{case (ii): }& 0\leq\fii_k(z)\leq2\fii_k\left(\frac{1}{2}(x+y)\right),\\
\textrm{case (iii): }& 0\leq\fii_k(z)\leq\max\{\fii_k(x),\fii_k(y)\},\\
\textrm{case (iv): }& 0\leq\fii_k(z)\leq\fii_k\left(\frac{1}{2}(x+y)\right).
\end{align}
In particular, $\fii_k$ is indeed bounded on the interval $[x,y]$. Thus, there is $\alpha_k\in\R$ such that $\fii_k(x)=x^{\alpha_k}$ for all $x>0$.

In cases (i) and (ii), we now have, for all $P$ of full support,
\begin{align}
\Phi(P)&=\sum_{i=1}^n \Phi \big( (p^{(1)}_i\,\cdots\,p^{(d)}_i) \big)\\
&=\sum_{i=1}^n\Phi\big((p^{(1)}_i\,1\,\cdots\,1)\boxtimes\cdots\boxtimes(1\,\cdots\,1\,p^{(d)}_i)\big)\\
&=\sum_{i=1}^n\prod_{k=1}^d \fii_k(p^{(k)}_i)=\sum_{i=1}^n\prod_{k=1}^d(p^{(k)}_i)^{\alpha_k},\label{eq:MatrMonHomRapu}
\end{align}
where $n$ is the number of rows of $P$.
Similarly, in cases (iii) and (iv), we find that
\begin{equation}\label{eq:MatrMonHomTRapu}
\Phi(P)=\max_{1\leq i\leq n}\prod_{k=1}^d (p^{(k)}_i)^{\alpha_k}.
\end{equation}

Since the functions of this form are indeed clearly homomorphisms, it only remains to determine for which parameter tuples they are monotone and nondegenerate. One easily sees
\begin{equation}
\left(\begin{array}{ccc}
1&\cdots&1\\
1&\cdots&1
\end{array}
\right)\succeq\big(2\,\cdots\,2\big)\succeq\left(\begin{array}{ccc}
1&\cdots&1\\
1&\cdots&1
\end{array}
\right),
\end{equation}
implying, due to monotonicity, that
\begin{equation}
\Phi\left(\begin{array}{ccc}
1&\cdots&1\\
1&\cdots&1
\end{array}
\right)=\Phi(2\,\cdots\,2).
\end{equation}
In cases (i) and (ii), this means that $\alpha_1+\dots+\alpha_d=1$ and, in cases (iii) and (iv), this means that $\alpha_1+\cdots+\alpha_d=0$.

Let us focus on (i) and (ii) for the moment. There, we now know that $\Phi$ must be of the form $f_{\mathuline{\alpha}}$ with some parameters $\mathuline{\alpha} \in A$. Let us now show that $\mathuline{\alpha} \in A_-$ in case (i) and $\mathuline{\alpha} \in A_+$ in case (ii) are necessary. For $P,Q\in\mc V^d_n$ and $t\in[0,1]$, and with the $(n\times 2n)$-column-stochastic matrix $T$ defined by $T(a\oplus b)=a+b$ for all $a,b\in\R^n$, note that in case (i) we have
\begin{align}
	\Phi\big(tP+(1-t)Q\big)= {}&\Phi\big(T\big[tP\boxplus(1-t)Q\big]\big) \\
		\le {} & \Phi\big(tP\boxplus(1-t)Q\big)\\
		= {}&\Phi(tP)+\Phi\big((1-t)Q\big) \\
		= {}& t\Phi(P)+(1-t)\Phi(Q), \label{eq:PhiConvex}
\end{align}
i.e.,\ $\Phi$ is convex. The final equality holds due to $\alpha_1 + \dots + \alpha_d = 1$. Similarly, in case (ii), we find that $\Phi$ must be concave. Let us define the single-row function $\fii:\R_{>0}^d\to\R_+$ through
\begin{equation}
\fii(x_1,\ldots,x_d) := \Phi \big( (x_1\,\cdots\,x_d) \big) = x_1^{\alpha_1}\cdots x_d^{\alpha_d}
\end{equation}
for all $(x_1,\ldots,x_d)\in\R_{>0}^d$. Naturally, $\fii$ must also be convex in case (i) and concave in case (ii). To analyze the convexity properties of $\fii$, let us compute its Hessian matrix,
\begin{align}
H_\fii(x_1,\ldots,x_d)&=\left(\frac{\partial^2\fii}{\partial x_i\, \partial x_j}(x_1,\ldots,x_d)\right)_{i,j=1}^d\\
&=\fii(x_1,\ldots,x_d)\left(\frac{1}{x_i x_j}\right)_{i,j=1}^d\star\underbrace{(\alpha_i\alpha_j-\delta_{i,j}\alpha_i)_{i,j=1}^d}_{=: \mathcal{A}}
\end{align}
where $\star$ denotes the Schur product, i.e.,\ the entry-wise product of matrices of the same shape. Because Schur multiplying with a positive semi-definite matrix preserves the semi-definiteness of a matrix and $\fii$ takes positive values, we find that, upon Schur multiplying with the positive semi-definite matrix $(x_i x_j)_{i,j=1}^d$, that $H_\fii$ is positive (respectively negative) semi-definite if and only if $\mathcal{A}$ is positive (respectively negative) semi-definite. According to Lemma 8 of \cite{Mu_et_al_2020}, we have:
\begin{itemize}
	\item $\mathcal{A}$ is positive semi-definite if and only if $\mathuline{\alpha} \in A_-$.
	\item $\mathcal{A}$ is negative semi-definite if and only if $\mathuline{\alpha} \in A_+$.
\end{itemize}
Therefore the convexity (concavity) of $\fii$ implies the conditions for the parameters $\mathuline{\alpha}$ mentioned in the claim, where the case that $\mathuline{\alpha}$ is a standard basis vector $e_k$ can be excluded since then $f_{\mathuline{\alpha}}$ is given by $f_{\mathuline{\alpha}}(P) = \|p^{(k)}\|_1$, which is degenerate.

Next we show that all these homomorphisms are indeed monotone and nondegenerate. So for case (i), let $\mathuline{\alpha} \in A_-$, in which case we know per the above that $\fii$ is convex. Let $P \in\mc V^d_n$ with rows $p_1, \dots, p_n$ and let $T=(T_{i,j})_{i,j}$ be an $(m\times n)$-stochastic matrix, for which we assume no zero rows without loss of generality and write $T_i:=\sum_{j=1}^n T_{i,j}$. We write $Q := TP \in\mc V^d_m$ with rows $q_1, \dots, q_m$. Since $\fii$ also preserves scalar multiplication due to $\alpha_1 + \dots + \alpha_d = 1$, we have
\begin{align}
f_{\mathuline{\alpha}}(Q)&=\sum_{i=1}^m \fii(q_i) = \sum_{i=1}^m \fii \left( \sum_{j=1}^n T_{i,j} p_j \right)=\sum_{i=1}^m T_i \, \fii \left( \sum_{j=1}^n \frac{T_{i,j}}{T_i} p_j \right) \\
&\le \sum_{i=1}^m T_i \, \sum_{j=1}^n \frac{T_{i,j}}{T_i} \fii(p_j)=\sum_{i=1}^m \sum_{j=1}^n T_{i,j} \fii(p_j) = \sum_{j=1}^n \fii(p_j) = f_{\mathuline{\alpha}}(P),
\end{align}
where the second step is by the preservation of scalar multiplication and the inequality follows by the convexity of $\fii$. Thus, $f_{\mathuline{\alpha}}:S^d\to\R_+$ is indeed monotone for $\alpha \in A_-$. For case (ii), the same argument with the inequality the other way shows that $f_{\mathuline{\alpha}}:S^d\to\R_+^{\rm op}$ is monotone for all $\alpha \in A_+$. To finish the proof in cases (i) and (ii), it remains to prove nondegeneracy whenever $\mathuline{\alpha}$ is not a standard basis vector. Let us show that, whenever there is an index $k$ with $\alpha_k\neq 0,1$, then $f_{\mathuline{\alpha}}$ is nondegenerate. Define the matrix
\begin{equation}\label{eq:Qt}
P_t:=\left(\begin{array}{ccccccc}
	\frac{1}{2} & \cdots & \frac{1}{2} & 1-t&\frac{1}{2}&\cdots&\frac{1}{2}\\[2pt]
	\frac{1}{2} & \cdots & \frac{1}{2} & t &\frac{1}{2}&\cdots&\frac{1}{2}
\end{array}\right)\in\mc V^d_{<\infty},
\end{equation}
where the nontrivial column is the $k$-th. Note that, whenever $0<s\leq t\leq \frac{1}{2}$, then $P_s\succeq P_t$. One easily sees that the values
\begin{equation}
f_{\mathuline{\alpha}}(P_t)=2^{\alpha_k-1}\big((1-t)^{\alpha_k}+t^{\alpha_k}\big)
\end{equation}
are not constant in $t$ whenever $\alpha_k \neq 0,1$, showing that $f_{\mathuline{\alpha}}$ is non-degenerate whenever $\mathuline{\alpha}$ is not a standard basis vector. Thus we have proven the claim as far as cases (i) and (ii) are concerned.

Let us now turn to the tropical cases (iii) and (iv). In case (iii), we can use reasoning analogous to case (i) to show that $\Phi$ is quasi-convex, i.e.,
\begin{equation}
\Phi\big(tP+(1-t)Q\big)\leq\max\{\Phi(P),\Phi(Q)\}
\end{equation}
for all $P,Q\in\mc V^d_{<\infty}$ and $t\in(0,1)$. In particular, the single-row function $\fii$ has to be a quasi-convex function on $\R^d_{>0}$. This quasi-convexity is equivalent to convexity of the sub-level sets
\begin{equation}
S_c:=\{(x_1,\ldots,x_d)\in\R_{>0}^d\,|\,x_1^{\alpha_1} \cdots x_d^{\alpha_d} \leq c\}
\end{equation}
for all $c > 0$. Let us assume that $\alpha_1 > 0$ without loss of generality (by permuting columns if necessary). Considering then the first coordinate $x_1$ as the dependent variable, we observe that $S_c$ contains precisely all points that are non-strictly below the graph of the function $w:\R_{>0}^{d-1}\to\R$ given by
\begin{equation}
w(x_2,\ldots,x_d)=\frac{x_2^{\mu_2}\cdots x_d^{\mu_d}}{c},
\end{equation}
where $\mu_k := -\frac{\alpha_k}{\alpha_1}$ for $k = 2, \dots, d$, which are now coefficients that sum to $1$ again. As we already saw earlier, the Hessian matrix of $w$ is easily found to be
\begin{equation}
H_w(x_2,\ldots,x_d)=w(x_1,\ldots,x_d)\left(\frac{1}{x_i x_j}\right)_{i,j=2}^{d}\star\underbrace{(\mu_i\mu_j-\delta_{i,j}\mu_i)_{i,j=2}^d}_{=:\,\mathcal{B}}.
\end{equation}
So for the sets $S_c$ to be convex, $w$ must be concave, or equivalently $H_w$ must be negative semi-definite. This is equivalent, by the same argument as above, to $\mu_2, \dots, \mu_d \ge 0$, or equivalently $\alpha_k \le 0$ for $k \ge 2$. Therefore $\Phi = f^\T_{\mathuline{\alpha}}$ is monotone only if $\alpha \in B_-$. Since $f^\T_0$ is the homomorphism that simply maps all nonzero $P$ to $1$, this homomorphism is degenerate, and we can conclude $\mathuline{\alpha} \in B_- \setminus \{0\}$. This establishes one direction of (iii).

For the same and only direction in case (iv), let now $\Phi : S^d \to \T\R_+^{\rm op}$ be a nondegenerate monotone homomorphism. Similarly as in case (iii), we now have
\begin{align}\label{eq:tropicalopposite}
\Phi\big(tP+(1-t)Q\big)\geq\max\{\Phi(P),\Phi(Q)\}.
\end{align}
for all $P, Q \in\mc V^d_{<\infty}$ and $t \in (0,1)$. We next show that, if $P \neq Q$ have the same support, then $\Phi(P)=\Phi(Q)$. In that case, we can find $s,t\in(0,1)$ and $P',Q'\in\mc V^d_{<\infty}$ such that
\begin{equation}
sP+(1-s)P'=Q,\qquad tQ+(1-t)Q'=P.
\end{equation}
Thus, according to \eqref{eq:tropicalopposite}, we have
\begin{equation}
\Phi(P)\geq\max\{\Phi(Q),\Phi(Q')\}\geq\Phi(Q)\geq\max\{\Phi(P),\Phi(P')\}\geq\Phi(P),
\end{equation}
i.e.,\ $\Phi(P)=\Phi(Q)$. This means that $\Phi$ is simply a function of the support of its argument. But then if $Q$ is a stochastic matrix of rank $1$, then
\begin{equation}
Q\succeq (1\,\cdots\,1) \succeq Q,
\end{equation}
so that $\Phi(Q)=1$. But then if $P\in\mc V^d_{<\infty}$ is nonzero, we can find any stochastic matrix of rank $1$ with the same support, and therefore conclude $\Phi(P) = 1$. Therefore there is no nondegenerate homomorphism to $\T\R_+^{\rm op}$, and case (iv) is finished.

Finally, we need to show that the maps $f^\T_{\mathuline{\alpha}}$ with $\mathuline{\alpha} \in B_- \setminus \{0\}$ are nondegenerate monotone homomorphisms $S^d \to \T\R_+$. The fact that they are homomorphisms is straightforward to verify, so let us focus on monotonicity. Per the above, we know that the single-row function $\fii$ is quasi-convex. We also know that $f^\T_{\mathuline{\alpha}}(t P) = f^\T_{\mathuline{\alpha}}(P)$ for all positive scalars $t$ by $\alpha_1 + \dots + \alpha_d = 0$. Then with the same $P$ and $Q=TP$ as above in the corresponding proof for case (i), we have a similar argument as given there,
\begin{align}
f^\T_{\mathuline{\alpha}}(Q) & = \max_{1\leq i\leq m} \fii(q_i) = \max_{1\leq i\leq m} \fii \left( \sum_{j=1}^n T_{i,j} p_j \right)\\
& = \max_{1\leq i\leq m} \fii \left( T_i\sum_{j=1}^n \frac{T_{i,j}}{T_i} p_j \right) = \max_{1\leq i\leq m} \fii \left(\sum_{j=1}^n \frac{T_{i,j}}{T_i} p_j \right)\\
& \le \max_{1\leq i\leq m} \max_{1\leq j\leq 1} \fii(p_j) = \max_{1\leq j\leq n} \fii(p_j) = f^\T_{\mathuline{\alpha}}(P).
\end{align}
Let us show that all these homomorphisms are non-degenerate if $\mathuline{\alpha} \neq 0$, or equivalently if there is $k$ with $\alpha_k > 0$. With $P_t$ as in \eqref{eq:Qt}, we get 
\begin{equation}
f^\T_{\mathuline{\alpha}}(P_t)= \left( 2 (1-t) \right)^{\alpha_k}
\end{equation}
for all $t\in(0,\frac{1}{2}]$. This is clearly a non-constant function, meaning that $f^\T_{\mathuline{\alpha}}$ is non-degenerate.
\end{proof}

Let us now go on to studying the monotone derivations at the degenerate homomorphisms $f_{e_k} : S^d\to\R_+$ with $f_{e_k}(P) = \|p^{(k)}\|$ for $k=1,\ldots,d$. For these, the Leibniz rule is
\begin{equation}\label{eq:MatrDerivProperty}
\Delta(P\boxtimes Q)=\Delta(P)\|q^{(k)}\|_1+\|p^{(k)}\|_1\Delta(Q).
\end{equation}
Turning our attention to the maps $\Delta^{(k)}_{\mathuline{\gamma}}$ of \eqref{eq:MatrDeriv}, we easily see that these are additive (under $\boxplus$) and satisfy \eqref{eq:MatrDerivProperty}. We next show that these maps with $\mathuline{\gamma}\in \R^d_+$ are precisely the monotone derivations at $f_{e_k}$.

\begin{proposition}\label{prop:MatrDerivation}
	Let $k\in\{1,\ldots,d\}$. On matrices $P \in \mc V^d_{<\infty}$ with $\|p^{(k)}\|_1 \in \Q$, the monotone derivations on $S^d$ at $f_{e_k}$ are precisely the $\Delta^{(k)}_{\mathuline{\gamma}}$ with $\mathuline{\gamma}\in \R_+^d$.
\end{proposition}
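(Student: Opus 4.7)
The proof plan parallels that of Proposition~\ref{prop:ModDeriv}(i). First I would restrict $\Delta$ to single-row matrices by setting $\delta(x_1,\ldots,x_d) := \Delta([(x_1,\ldots,x_d)])$ for $x_1,\ldots,x_d > 0$. Since the $\boxtimes$-product of two single rows is componentwise, the Leibniz rule takes the form $\delta(x_1 y_1,\ldots,x_d y_d) = y_k\,\delta(x_1,\ldots,x_d) + x_k\,\delta(y_1,\ldots,y_d)$, so that $\eta(x) := \delta(x)/x_k$ is a group homomorphism $(\R_{>0}^d,\cdot) \to (\R,+)$. Setting $\eta_j(t) := \eta(1,\ldots,1,t,1,\ldots,1)$ with $t$ at position $j$, this splits as $\eta(x) = \sum_j \eta_j(x_j)$, with each $\eta_j$ satisfying the ordinary Cauchy equation $\eta_j(ab) = \eta_j(a) + \eta_j(b)$ on $\R_{>0}$. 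By additivity of $\Delta$ under $\boxplus$, the derivation is then fully encoded by the $\eta_j$, so the task reduces to identifying them.

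To pin down $\eta_\ell$ for each $\ell \neq k$, I would employ the two-row comparison family
\[
P^{(\ell)}_t := \begin{pmatrix} 1 & \cdots & 1-t & \cdots & 1 \\ 1 & \cdots & t & \cdots & 1 \end{pmatrix},
\]
with the nontrivial column at position $\ell$. A short calculation shows $P^{(\ell)}_s \succeq P^{(\ell)}_t$ whenever $0 < s \le t \le 1/2$, witnessed by the bistochastic mixing matrix $\bigl(\begin{smallmatrix} a & 1-a \\ 1-a & a \end{smallmatrix}\bigr)$ for a suitable $a$ depending on $s,t$. Using the additive decomposition and Cauchy's equation, $\Delta(P^{(\ell)}_t) = \eta_\ell(1-t) + \eta_\ell(t) = \eta_\ell(t(1-t))$, so monotonicity forces $\eta_\ell$ to be non-increasing on $(0,1/4]$. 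Combined with Cauchy's equation this yields $\eta_\ell(x) = -\gamma_\ell \log x$ for some unique $\gamma_\ell \ge 0$, by the same argument used at the end of the proof of Proposition~\ref{prop:ModDeriv}(i).

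To determine the rational values of $\eta_k$, I would exploit the $S^d$-equivalence
\[
(\lambda,\ldots,\lambda) \sim \underbrace{(\lambda/n,\ldots,\lambda/n) \boxplus \cdots \boxplus (\lambda/n,\ldots,\lambda/n)}_{n \text{ copies}},
\]
whose two directions are witnessed by the $(n \times 1)$ stochastic matrix of entries $1/n$ and its transpose of entries $1$. Applying $\Delta$ yields $\sum_j \eta_j(\lambda) = \sum_j \eta_j(\lambda/n)$, so the Cauchy equation gives $\sum_j \eta_j(n) = 0$ for all $n \in \N_{>0}$, and hence $\sum_j \eta_j(\lambda) = 0$ for all $\lambda \in \Q_{>0}$. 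Combined with the previous step this forces $\eta_k(\lambda) = c_k \log \lambda$ on $\Q_{>0}$, with $c_k := \sum_{\ell \neq k} \gamma_\ell$.

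The final step is the extension to matrices $P$ with merely $\|p^{(k)}\|_1 \in \Q$. Here the key observation is a second equivalence in $S^d$: for any $\lambda_1,\lambda_2 > 0$ with $\lambda := \lambda_1 + \lambda_2$,
\[
(\lambda, 1,\ldots,1) \sim (\lambda_1, \lambda_1/\lambda,\ldots,\lambda_1/\lambda) \boxplus (\lambda_2, \lambda_2/\lambda,\ldots,\lambda_2/\lambda),
\]
where the nontrivial column on the left is column $k$, both directions witnessed by the $(2 \times 1)$ stochastic matrix $(\lambda_1/\lambda,\lambda_2/\lambda)^T$ and the $(1 \times 2)$ stochastic matrix of entries $1$. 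Applying $\Delta$ and substituting $\eta_\ell = -\gamma_\ell \log$ from Step~1, the resulting identity rearranges into the additive Cauchy equation $\psi(\lambda_1 + \lambda_2) = \psi(\lambda_1) + \psi(\lambda_2)$ for the function $\psi(x) := x\eta_k(x) - c_k x \log x$ on $\R_{>0}$. Since $\psi$ vanishes on $\Q_{>0}$ by the previous step, additivity then gives $\sum_i \psi(p_i^{(k)}) = \psi(\|p^{(k)}\|_1) = 0$ whenever $\|p^{(k)}\|_1 \in \Q$, and this identity unpacks to precisely $\Delta(P) = \Delta^{(k)}_{\mathuline{\gamma}}(P)$ on such $P$. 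The main point of ingenuity will be recognizing this second equivalence, which converts the potentially subtle issue of extending a Cauchy solution from $\Q_{>0}$ to $\R_{>0}$ into a single evaluation of an additive function at a rational argument, and thereby sidesteps any need for a continuity-based sandwich argument akin to Lemma~\ref{lemma:UpperLowerBound}.
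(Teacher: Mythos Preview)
Your argument is correct, and it takes a genuinely different route from the paper's in the final extension step. Both proofs begin the same way: restrict to single rows, obtain the multiplicative Cauchy equation for the coordinate functions, and determine $\eta_\ell$ for $\ell\neq k$ using a two-row comparison family. (A small imprecision: your reference to ``the end of the proof of Proposition~\ref{prop:ModDeriv}(i)'' concerns a rational-only argument; what you actually need is the standard fact that a multiplicative-to-additive Cauchy solution on $\R_{>0}$ that is monotone on a nontrivial real interval is a multiple of $\log$, which is closer to the argument in part~(ii).) The paper, by contrast, shows each $g_\ell$ is \emph{convex} via a different majorization inequality, deduces continuity, and then gets the log form.

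The real divergence is in how the column-$k$ contribution is handled. The paper uses the equivalence $(p,\ldots,p)\sim(\|p\|_1,\ldots,\|p\|_1)$ to obtain $\Delta(p,\ldots,p)=0$ whenever $\|p\|_1\in\Q$, and then factors each row of $P$ as a diagonal part times a ratio part via the Leibniz rule; the diagonal part vanishes and only the $g_\ell$-terms (with $\ell\neq k$) survive. You instead keep $\eta_k$ explicit, use your second equivalence $(\lambda,1,\ldots,1)\sim(\lambda_1,\lambda_1/\lambda,\ldots)\boxplus(\lambda_2,\lambda_2/\lambda,\ldots)$ to deduce that $\psi(x)=x\eta_k(x)-c_k x\log x$ is \emph{additive} on $\R_{>0}$, and then the identity $\sum_i\psi(p_i^{(k)})=\psi(\|p^{(k)}\|_1)=0$ falls out immediately from $\|p^{(k)}\|_1\in\Q$. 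This is an elegant trick: it replaces the paper's Leibniz-rule gymnastics by a single application of additivity, and it makes transparent exactly where the rationality hypothesis enters. The paper's route is perhaps more conceptual (factor out the part that is forced to vanish), while yours is more algebraic and arguably cleaner.

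One omission: your proposal does not address the converse direction, namely that every $\Delta^{(k)}_{\mathuline{\gamma}}$ with $\mathuline{\gamma}\in\R_+^d$ is indeed a monotone derivation at $f_{e_k}$. This is routine (monotonicity is the usual data-processing inequality for Kullback--Leibler divergence), but the statement says ``precisely,'' so it should be mentioned.
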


Note that $\Delta^{(k)}_{\mathuline{\gamma}}$ does not actually depend on $\gamma_k$, so the value of this component is completely arbitrary.

\begin{proof}
Let us fix $k=1$; the other cases are similar. Let $\Delta:S^d\to\R$ be a monotone derivation at $f_{e_1}$. Define $\delta:\R_{>0}^d\cup\{(0,\ldots,0)\}\to\R$ as the one-row function
\begin{equation}
\delta(x_1,\ldots,x_d)=\Delta\big(x_1\,\cdots\,x_d\big).
\end{equation}
We first show that $\delta(x,\ldots,x)=0$ for all $x\in\Q_+$. The Leibniz rule immediately gives that $\delta(0,\ldots,0)=0=\delta(1,\ldots,1)$. For every $n\in\N$, we easily see that
\begin{equation}
\left(\begin{array}{ccc}
1&\cdots&1\\
\vdots&\ddots&\vdots\\
1&\cdots&1
\end{array}\right)\succeq (n\,\cdots\,n)\succeq \left(\begin{array}{ccc}
1&\cdots&1\\
\vdots&\ddots&\vdots\\
1&\cdots&1
\end{array}\right)
\end{equation}
where the all-1 matrix is $(n\times d)$. Using the monotonicity and additivity of $\Delta$, we have
\begin{equation}
\delta(n,\ldots,n)=\Delta \left(\left(\begin{array}{ccc}
1&\cdots&1\\
\vdots&\ddots&\vdots\\
1&\cdots&1
\end{array}\right)\right) = n\delta(1,\ldots,1)=0.
\end{equation}
The Leibniz rule then easily gives $\delta(1/n,\ldots,1/n)=-n^{-2}\delta(n,\ldots,n)=0$ as well. Finally, for all $m,n\in\N$,
\begin{equation}
\delta\left(\frac{m}{n},\ldots,\frac{m}{n}\right)=m\underbrace{\delta\left(\frac{1}{n},\ldots,\frac{1}{n}\right)}_{=0}+\underbrace{\delta(m,\ldots,m)}_{=0}\frac{1}{n}=0,
\end{equation}
showing that $\delta(x,\ldots,x)=0$ for all $x\in\Q_+$.

Let now $p\in\R_+^n$ be a column vector of any length $n \in \N$ and such that $\|p\|_1\in\Q$. Using the easily seen fact that
\begin{equation}
(p,\ldots,p)\succeq(\|p\|_1\,\cdots\,\|p\|_1)\succeq(p,\ldots,p),
\end{equation}
the monotonicity of $\Delta$, and the vanishing property proven above for $\delta$, we see that
\begin{equation}
\Delta(p,\ldots,p)=\delta(\|p\|_1,\ldots,\|p\|_1)=0.
\end{equation}
Let then $P \in\mc V^d_n$ be such that $\|p^{(1)}\|_1\in\Q$, where we assume full support without loss of generality. Using the Leibniz rule several times and the above vanishing property, we obtain
\begin{align}
\Delta(P)&=\sum_{i=1}^n\delta(p^{(1)}_i,\ldots,p^{(d)}_i)=\sum_{i=1}^n\delta\left(p^{(1)}_i\cdot1,p^{(1)}_i\cdot\frac{p^{(2)}_i}{p^{(1)}_i},\ldots,p^{(1)}_i\cdot\frac{p^{(d)}_i}{p^{(1)}_i}\right)\\
&=\sum_{i=1}^n\left[\delta(p^{(1)}_i,\ldots,p^{(1)}_i)+p^{(1)}_i \delta\left(1,\frac{p^{(2)}_i}{p^{(1)}_i},\ldots,\frac{p^{(d)}_i}{p^{(1)}_i}\right)\right]\\
&=\underbrace{\Delta(p^{(1)},\ldots,p^{(1)})}_{=0}+\sum_{i=1}^n p^{(1)}_i\delta\left(1,\frac{p^{(2)}_i}{p^{(1)}_i},\ldots,\frac{p^{(d)}_i}{p^{(1)}_i}\right)\\
&=\sum_{i=1}^n p^{(1)}_i\delta\left(1,\frac{p^{(2)}_i}{p^{(1)}_i}\cdot1,1\cdot\frac{p^{(3)}_i}{p^{(1)}_i},\ldots,1\cdot\frac{p^{(d)}_i}{p^{(1)}_i}\right)\\
&=\sum_{i=1}^n p^{(1)}_i\left[\delta\left(1,\frac{p^{(2)}_i}{p^{(1)}_i},1,\ldots,1\right)+\delta\left(1,1,\frac{p^{(3)}_i}{p^{(1)}_i},\ldots,\frac{p^{(d)}_i}{p^{(1)}_i}\right)\right]\\
&=\cdots=\sum_{i=1}^n\sum_{\ell=2}^d p^{(1)}_i g_\ell\left(\frac{p^{(\ell)}_i}{p^{(1)}_i}\right),\label{eq:MatrDerivForm}
\end{align}
where, for all $\ell = 2,\ldots,d$, we have used
\begin{equation}
	g_\ell(x) := \delta(1,\ldots,1,x,1,\ldots,1),
\end{equation}
where $x$ appears in the $\ell$-th slot. Using the Leibniz rule, one easily sees that, for all $x,y>0$ and all $\ell$,
\begin{equation}
	g_\ell(xy)=g_\ell(x)+g_\ell(y),
\end{equation}
so that $g_\ell$ satisfies the functional equation associated with the logarithm. We show that $g_\ell$ is a multiple of the logarithm by showing that it is continuous on $\R_{>0}$. Let $x,y>0$ and $t\in(0,1)$. Using the fact that
\begin{align}
&\left(\begin{array}{ccccccc}
t&\cdots&t&tx&t&\cdots&t\\
1-t&\cdots&1-t&(1-t)y&1-t&\cdots&1-t
\end{array}\right)\\
&\succeq\big(1\,\cdots\,1\,tx+(1-t)y\,1\,\cdots\,1\big),
\end{align}
where $x$ and $y$ appear in the $\ell$-th column, applying $\Delta$ gives, by monotonicity and the derivation property of $\delta$,
\begin{equation}
	tg_\ell(x) + (1-t)g_\ell(y) \ge g_\ell\big(tx+(1-t)y\big),
\end{equation}
where the left-hand side is obtained by the formula~\eqref{eq:MatrDerivForm}.

Thus, $g_\ell$ is convex and hence continuous on its domain $\R_{>0}$. This means that there are $\gamma_2,\ldots,\gamma_d\in\R$ such that
\begin{equation}
	g_\ell =-\gamma_\ell \log{}
\end{equation}
for every $\ell = 2,\ldots,d$. We now aim at showing that $\gamma_\ell \ge 0$. Defining, for all $t\in(0,\frac{1}{2}]$, the matrix
\begin{equation}
P_t=\left(\begin{array}{ccccccc}
	\frac{1}{2}&\cdots&\frac{1}{2}&1-t&\frac{1}{2}&\cdots&\frac{1}{2}\\[2pt]
	\frac{1}{2}&\cdots&\frac{1}{2}&t&\frac{1}{2}&\cdots&\frac{1}{2}
\end{array}\right),
\end{equation}
where the entries containing $t$ are in the $\ell$-th column, we note that the function
\begin{equation}
\Delta(P_t) \stackrel{\eqref{eq:MatrDerivForm}}{=} \frac{1}{2}\gamma_\ell \log{\frac{1}{4 t(1-t)}}
\end{equation}
should be non-increasing in $t \in (0,\frac{1}{2}]$, which implies $\gamma_\ell \ge 0$. By~\eqref{eq:MatrDerivForm} again, this proves that every monotone derivation must be of the claimed form on all matrices whose first column has rational $1$-norm.

Let us finally show that the map $\Delta^{(1)}_{\mathuline{\gamma}}$ is a monotone derivation at $f_{e_1}$ if $\gamma_2, \ldots, \gamma_d \ge 0$. As already noted, the fact that it is a derivation follows by a straightforward calculation. For monotonicity, let $P \in\mc V^d_n$ for arbitrary $n$, without loss of generality of full support, and let $T=(T_{i,j})$ be an $(m\times n)$-stochastic matrix without zero rows. With $Q := TP \in\mc V^d_m$, we get by convexity of the function $x\mapsto\log{\frac{1}{x}}$,
\begin{align}
\Delta^{(1)}_{\mathuline{\gamma}}(P)&=\sum_{j=1}^n\sum_{\ell=2}^d \gamma_\ell p^{(1)}_j\log{\frac{p^{(1)}_j}{p^{(\ell)}_j}}=\sum_{i=1}^m\sum_{\ell=2}^d\sum_{j=1}^n \gamma_\ell q^{(1)}_i\frac{T_{i,j}p^{(1)}_j}{q^{(1)}_i}\log{\frac{p^{(1)}_j}{p^{(\ell)}_j}}\\
&\geq\sum_{i=1}^m\sum_{\ell=2}^d\gamma_\ell q^{(1)}_i\log{\frac{q^{(1)}_i}{\sum_{j=1}^n T_{i,j}p^{(\ell)}_j}}=\sum_{i=1}^m\sum_{\ell=1}^d\gamma_\ell q^{(1)}_i\log{\frac{q^{(1)}_i}{q^{(\ell)}_i}}\\
&=\Delta^{(1)}_{\mathuline{\gamma}}(Q),
\end{align}
showing that $\Delta^{(1)}_{\mathuline{\gamma}}:S^d\to\R_+$ is indeed monotone.
\end{proof}

\subsection{Sufficient conditions for large-sample and catalytic matrix majorization}

Our goal is now to apply Theorem~\ref{thm:Fritz8.6} to $S^d$ in order to obtain sufficient and generically necessary conditions for large-sample and catalytic matrix majorization. Here and throughout this subsection, let us write
\[
	\alpha_{\max} := \max_{k=1,\dots,d} \alpha_k, \qquad\quad
	\alpha_{\min} := \min_{k=1,\dots,d} \alpha_k.
\]
Then for every $\mathuline{\alpha}\in (A_+ \cup A_-) \setminus \{e_1, \ldots, e_d\}$ and $\mathuline{\beta} \in B_- \setminus \{0\}$, let us define the associated \emph{matrix $\mathuline{\alpha}$-divergence} on any $P \in \mc V^d_{<\infty}$ as
\begin{align}
\begin{split}\label{eq:MatrDiv}
D_{\mathuline{\alpha}}(P)&:=\frac{1}{\alpha_{\max} - 1}\log{f_{\mathuline{\alpha}}(P)}=\frac{1}{\alpha_{\max} - 1} \log \sum_i \prod_{k=1}^d \left( p_i^{(k)} \right)^{\alpha_k},\\
D^\T_{\mathuline{\beta}}(P)&:=\frac{1}{\beta_{\max}} \log{f^\T_{\mathuline{\beta}}(P)}=\frac{1}{\beta_{\max}} \log \max_i \prod_{k=1}^d \left( p_i^{(k)} \right)^{\beta_k}, \\ 
\end{split}
\end{align}
where $i$ ranges over the non-zero rows of $P$. We make these definitions mainly since in contrast to the $f_{\mathuline{\alpha}}$, the $D_{\mathuline{\alpha}}$ are monotone as maps $S^d\to\R$, where $\R$ is equipped with its standard order. As we will see shortly, they also allow us to write the tropical maps and the derivations as limits of the $D_{\mathuline{\alpha}}$.

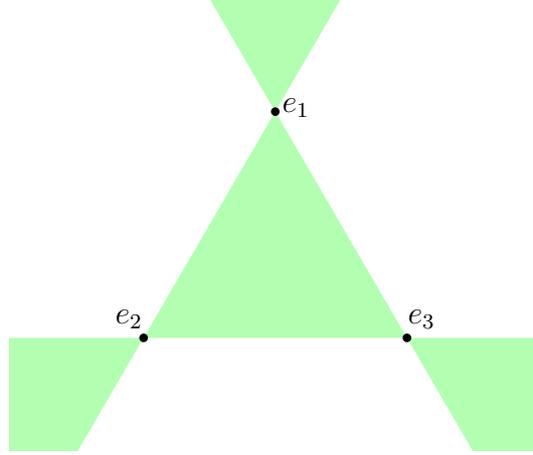
\begin{figure}
\captionsetup{width=.9\linewidth}
\begin{center}
	\begin{tikzpicture}[every node/.style={draw,circle,inner sep=1pt,fill=black}]
	\clip (-3.5,-2.5) rectangle (3.5,3.5);
	\foreach \s in {0,1,2}
		\fill[fill=green!30] (90 + 120 * \s : 2) -- + (60 + 120 * \s : 3) -- + (120 + 120 * \s : 3) -- cycle;
	\fill[fill=green!30] (0,2) node[label={[right]{$e_1$}}] {} -- (-1.732,-1) node[label={[above left]{$e_2$}}] {} -- (1.732,-1) node[label={[above right]{$e_3$}}] {} -- cycle;
\end{tikzpicture}
\end{center}
\caption{The parameter range $(A_- \cup A_+) \setminus \{e_1, \dots, e_d\}$ for the nondegenerate monotone homomorphisms $f_{\mathuline{\alpha}}$ on $S^3$, drawn in green as a subset of the $2$-dimensional affine space $A$. The set $A_+$ is the probability simplex on three outcomes (the triangular area in the centre). The other regions that constitute $A_-$ are cones emerging from the vertices, where the latter correspond to the degenerate homomorphisms $f_{e_k}$.}
\label{fig:d3}
\end{figure}

\begin{remark}
Different choices of normalization in~\eqref{eq:MatrDiv} are possible and may be worth considering. For example, a natural choice would be to utilize the logarithmic evaluation maps from~\cite{Fritz2021b}, which make sense for any preordered semiring with a fixed power universal element. In our present case, by Lemma~\ref{lem:MatrPowUniv} this amounts to choosing a matrix $U$ of full support and such that all columns are normalized and distinct. The logarithmic evaluation maps are then given by
\begin{equation}
P \longmapsto \frac{\log \Phi(P)}{\log \Phi(U)}
\end{equation}
for every nondegenerate monotone homomorphism $\Phi$.

For example, a nice choice could be to take $U$ to be the matrix with $\frac{2}{d+1}$ as every diagonal entry and $\frac{1}{d+1}$ off the diagonal. With this choice, one obtains the same $D_{\mathuline{\alpha}}$ as in~\eqref{eq:MatrDiv}, but with
\begin{equation}
\log\left(\frac{1}{d+1} \sum_k 2^{\alpha_k} \right)
\end{equation}
in place of $\alpha_{\max} - 1$. While this has the advantage of being a smooth function of $\mathuline{\alpha}$ and not requiring any case distinctions in any of the upcoming proofs, it is at the same time more cumbersome to calculate with, and so we proceed with~\eqref{eq:MatrDiv} as our preferred normalization.
\end{remark}

\begin{remark}\label{rem:MatrTropNorm}
For every $\lambda > 0$, the normalization~\eqref{eq:MatrDiv} satisfies
\begin{equation}
D^\T_{\lambda \mathuline{\beta}} = D^\T_{\mathuline{\beta}}.
\end{equation}
Thus it is enough to normalize $\mathuline{\beta}$ e.g.~such that $\beta_{\max} = 1$, and this implements the normalization of the tropical homomorphisms (Remark~\ref{tropical_normalization}).
\end{remark}

\begin{proposition}\label{prop:MatrixDivLim}
Let $\mathuline{\beta} \in B_-$ be such that $\beta_k = 1$, and hence $\beta_\ell \le 0$ for all $\ell \neq k$. For $\lambda \ge 0$, define
\begin{align}
\mathuline{\alpha}^\lambda&:=e_k + (\lambda - 1) \mathuline{\beta} \\
&=\big((\lambda-1)\beta_1,\ldots,(\lambda-1)\beta_{k-1},\lambda,(\lambda-1)\beta_{k+1},\ldots,(\lambda-1)\beta_d\big).
\end{align}
Then:
\begin{enumerate}[label=(\roman*)]
\item $\mathuline{\alpha}^\lambda \in (A_+ \cup A_-) \setminus \{e_1, \dots, e_d\}$ for all $\lambda \in (0,1) \cup (1,\infty)$.
\item On any $P$ of full support and with normalized columns, we have
\begin{align}
D^\T_{\mathuline{\beta}}(P)		& = \lim_{\lambda\to\infty} D_{\mathuline{\alpha}^\lambda}(P),\label{eq:limtropic}\\
\Delta^{(k)}_{-\mathuline{\beta}}(P)	& = \lim_{\lambda\to 1}D_{\mathuline{\alpha}^\lambda}(P).\label{eq:limderiv}
\end{align}
\item For such $P$, the function $\lambda \mapsto D_{\mathuline{\alpha}^\lambda}$ has a continuous extension to $[0,\infty]$, and this extension is non-decreasing for $\lambda \ge \frac{\beta_{\min}}{\beta_{\min}-1}$.
\end{enumerate}
\end{proposition}

In terms of Figure~\ref{fig:d3}, the trajectory $\lambda \mapsto \mathuline{\alpha}^\lambda$ starts at $\lambda = 0$ on the face of the probability simplex $A_+$ defined by $\alpha_k = 0$, goes through the vertex $e_k$ at $\lambda = 1$ and then continues in $A_-$ in the direction $\mathuline{\beta}$. As a function of $\lambda\geq0$, $D_{\mathuline{\alpha}^\lambda}(P)$ is non-decreasing after $\mathuline{\alpha}^\lambda$ passes the line drawn from the centre of $A_+$ to the middle of the side of $A_+$ closest to the trajectory.

\begin{proof}
	For item (i), it is straightforward to check that $\mathuline{\alpha}^\lambda \in A_+$ for $\lambda \in [0,1]$ and $\mathuline{\alpha}^\lambda \in A_-$ for $\lambda \in [1,\infty)$.

	Let us next prove the claims regarding the limits in item (ii). Both formulas follow by direct calculation as follows.
	For \eqref{eq:limtropic}, recall that for all real numbers $r_1, \ldots, r_m > 0$ and $s_1, \ldots, s_m > 0$, we have\footnote{This type of formula occurs frequently in the context of tropical mathematics and Maslov dequantization, and the reader will be able to easily work out the straightforward proof.}
\begin{equation}
	\label{eq:maslov_tropical}
	\lim_{\lambda \to \infty} \frac{1}{\lambda - 1} \log \sum_i r_i s_i^\lambda = \log \max_i s_i.
\end{equation}
This directly implies
\begin{align}
\lim_{\lambda \to \infty} D_{\mathuline{\alpha}^\lambda}(P)&=\lim_{\lambda\to\infty} \frac{1}{\lambda-1} \log{\sum_i\left[\prod_{\ell\neq k} \left(p^{(\ell)}_i\right)^{-\beta_\ell}\right]\left[p^{(k)}_i\prod_{\ell\neq k}\left(p^{(\ell)}_i\right)^{\beta_\ell}\right]^\lambda}\\
&=\log{\max_i p^{(k)}_i \prod_{\ell\neq k}\left(p^{(\ell)}_i\right)^{\beta_\ell}}= D^\T_{\mathuline{\beta}}(P),
\end{align}
as was to be shown.
For \eqref{eq:limderiv}, using numbers as in~\eqref{eq:maslov_tropical} and assuming $\sum_i r_i = 1$ in addition, we similarly note
\begin{equation}
	\lim_{\lambda \to 1} \frac{1}{\lambda - 1} \log \sum_i r_i s_i^{\lambda - 1} = \sum_i r_i \log s_i,
\end{equation}
as follows by a straightforward application of l'H\^{o}pital's rule. Plugging this in gives
\begin{align}
	\lim_{\lambda\to 1}D_{\mathuline{\alpha}^\lambda}(P)&=\lim_{\lambda\to 1}\frac{1}{\lambda-1}\log{\sum_{i} p^{(k)}_i \prod_{\ell=1}^d \left(p^{(\ell)}_i\right)^{\beta_\ell(\lambda-1)}}\\
	&= \sum_i p^{(k)}_i\log{\prod_{\ell=1}^d \left(p^{(\ell)}_i\right)^{\beta_\ell}}\\
	&= -\sum_i\sum_{\ell\neq k}\beta_\ell p^{(k)}_i\log{\frac{p^{(k)}_i}{p^{(\ell)}_i}} = \Delta^{(k)}_{-\mathuline{\beta}}(P),
\end{align}
	as was to be shown.

	For item (iii), the existence of the limits just proven shows that we obtain a continuous extension taking the values $\Delta^{(k)}_{-\mathuline{\beta}}(P)$ at $\lambda = 1$ and $D^\T_{\mathuline{\beta}}$ at $\lambda = \infty$.
	For the monotonicity, let $\frac{\beta_{\rm min}}{\beta_{\rm min}-1}\leq\lambda\leq\lambda' \le \infty$.
	It is enough to assume that $\lambda, \lambda' \neq 1, \infty$ and prove the monotonicity
	\begin{equation}
		D_{\mathuline{\alpha}^\lambda}(P) \le D_{\mathuline{\alpha}^{\lambda'}}(P)
	\end{equation}
	then, since the remaining cases then follow by continuity.
	With $z := \frac{\lambda-1}{\lambda'-1}$, we distinguish three cases: 
	\begin{itemize}
		\item $\lambda\leq\lambda'<1$: Now $z \ge 1$, so that $x\mapsto x^z$ is convex.
		\item $\lambda<1<\lambda'$: Now $z<0$, so that $x\mapsto x^z$ is convex again.
		\item $1<\lambda\leq\lambda'$: Now $0<z\leq 1$, so that $x\mapsto x^z$ is concave.
	\end{itemize}
	Considering all the above cases and taking the signs of the denominators into account, we have by Jensen's inequality,
	\begin{equation}
		\label{monotone_jensen}
		\frac{1}{\lambda-1}\log{\sum_i r_i s_i^z}\leq \frac{z}{\lambda-1}\log{\sum_i r_i s_i} = \frac{1}{\lambda'-1}\log{\sum_i r_i s_i}
	\end{equation}
	for all probability vectors $r$ and all positive reals $s_i$.
	Thus, we have
	\begin{align}
		D_{\mathuline{\alpha}^\lambda}(P)&=\frac{1}{\lambda-1}\log{\sum_i \left(p^{(k)}_i\right)^{\lambda}\prod_{\ell\neq k}\left(p^{(\ell)}_i\right)^{(\lambda-1)\beta_\ell}}\\
		&=\frac{1}{\lambda-1}\log{\sum_i p^{(k)}_i \prod_{\ell\neq k}\left(\frac{p^{(\ell)}_i}{p^{(k)}_i}\right)^{(\lambda-1)\beta_\ell}}\\
		&=\frac{1}{\lambda-1}\log{\sum_i p^{(k)}_i \left[\prod_{\ell\neq k}\left(\frac{p^{(\ell)}_i}{p^{(k)}_i}\right)^{(\lambda'-1)\beta_\ell}\right]^z}\\
		&\leq\frac{1}{\lambda'-1}\log{\sum_i p^{(k)}_i \prod_{\ell\neq k}\left(\frac{p^{(\ell)}_i}{p^{(k)}_i}\right)^{(\lambda'-1)\beta_\ell}}=D_{\mathuline{\alpha}^{\lambda'}}(P),
	\end{align}
	as was to be shown.
\end{proof}

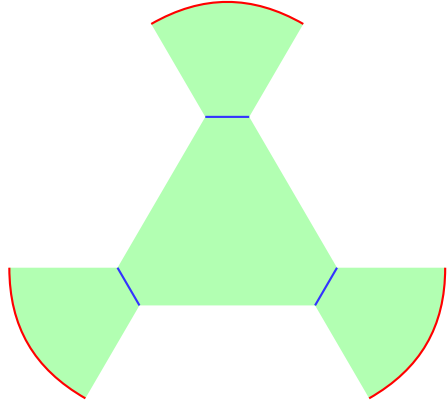
\begin{figure}
\captionsetup{width=.9\linewidth}
\begin{center}
\begin{tikzpicture}
	\fill[fill=green!30,name path=innertriangle] (0,2) -- (-1.732,-1) -- (1.732,-1) -- cycle;
	\foreach \s in {0,1,2} {
		\fill[fill=green!30,name path=outertriangle\s] (90 + 120 * \s : 1) -- + (60 + 120 * \s : 2) node (A\s) {} to [bend right] + (120 + 120 * \s : 2) node (B\s) {} -- cycle;
		\path [name intersections={of=innertriangle and outertriangle\s}];
		\draw[thick,blue!80] (intersection-1) -- (intersection-2);
		\draw[thick,red=!80] (A\s.center) to [bend right] (B\s.center);
	}
\end{tikzpicture}
\end{center}
\caption{The relevant monotones on the preordered semiring $S^3$ in the sense of the \emph{test spectrum} of~\cite{Fritz2021b} represented schematically. In green, we have the matrix $\mathuline{\alpha}$-divergences $D_{\mathuline{\alpha}}$ for $\mathuline{\alpha} \in (A_+ \cup A_-) \setminus \{e_1, \dots, e_d\}$ from~\eqref{eq:MatrDiv}. In red, we have the tropical quantities $D^\T_{\mathuline{\beta}}$ for $\mathuline{\beta} \in B_- \setminus \{0\}$, modulo scaling per Remark~\ref{rem:MatrTropNorm}. In blue, we have the derivation quantities $\Delta^{(k)}_{\mathuline{\gamma}}$ for nonzero $\mathuline{\gamma}$, also modulo scaling. The topology corresponds to the limits of Proposition~\ref{prop:MatrixDivLim}. In comparison with Figure~\ref{fig:d3}, the red tropical part can be thought of as points at infinity, specified by a direction $\mathuline{\beta} \in B_- \setminus \{0\}$, which serve as a compactification similar to points at infinity in the projective plane. The blue derivation part ``blows up'' every vertex in Figure~\ref{fig:d3} to a line.}
\label{fig:d3div}
\end{figure}

The set of all relevant monotones can thus be illustrated as in Figure~\ref{fig:d3div}.

\begin{proposition}\label{prop:ConvexFaithful}
	Let $P \in\mc V^d_n$ be stochastic. Then:
	\begin{enumerate}[label=(\roman*)]
		\item For $\mathuline{\alpha}\in(A_+\cup A_-)\setminus\{e_1,\ldots,e_d\}$, we have $D_{\mathuline{\alpha}}(P)\geq 0$.
			This holds with equality if and only if all columns $p^{(k)}$ with $\alpha_k\neq0$ are equal.
		\item For $\mathuline{\beta}\in B_- \setminus \{0\}$, we have $D^\T_{\mathuline{\beta}}(P)\geq0$.
			This holds with equality if and only if all columns $p^{(k)}$ with $\beta_k\neq0$ are equal.
	\end{enumerate}
\end{proposition}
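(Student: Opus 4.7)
The plan is to treat (i) and (ii) in sequence, deriving (ii) from (i). Throughout, since the columns of $P$ share a common support, I may restrict to that support and so assume that $P$ has full support; zero rows contribute nothing to either $f_{\mathuline{\alpha}}$ or $f^{\mathsf T}_{\mathuline{\beta}}$. For (i), I would split on whether $\mathuline{\alpha}\in A_+\setminus\{e_1,\ldots,e_d\}$ (forcing $\alpha_{\max}<1$) or $\mathuline{\alpha}\in A_-\setminus\{e_1,\ldots,e_d\}$ (forcing $\alpha_{\max}>1$); the boundary case $\alpha_{\max}=1$ together with the normalisation and the sign constraints collapses to a standard basis vector, which is excluded.

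In the $A_+$ subcase, the sign of $(\alpha_{\max}-1)^{-1}$ reduces the claim to $f_{\mathuline{\alpha}}(P)\le 1$. This is immediate from the row-wise weighted AM--GM inequality $\prod_k (p_i^{(k)})^{\alpha_k}\le \sum_k\alpha_k p_i^{(k)}$, summed over $i$ and combined with the normalisation $\sum_i p_i^{(k)}=1$; the equality clause reduces to the AM--GM equality case, which enforces $p_i^{(k)}$ to be independent of $k$ on $\{k:\alpha_k>0\}$ for every row, i.e.\ the stated column-equality condition.

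In the more delicate $A_-$ subcase, let $k^*$ be the unique index with $\alpha_{k^*}>1$, so $\alpha_\ell\le 0$ for $\ell\neq k^*$ and $\sum_{\ell\neq k^*}\alpha_\ell=1-\alpha_{k^*}<0$. Setting $p:=p^{(k^*)}$ and pulling one factor of $p_i$ out of each row gives
\[
f_{\mathuline{\alpha}}(P)=\sum_i p_i\,\Bigl[p_i^{\alpha_{k^*}-1}\prod_{\ell\neq k^*}(p_i^{(\ell)})^{\alpha_\ell}\Bigr],
\]
so Jensen's inequality for the concave $\log$ against the probability vector $p$ yields
\[
\log f_{\mathuline{\alpha}}(P)\ge(\alpha_{k^*}-1)\sum_i p_i\log p_i+\sum_{\ell\neq k^*}\alpha_\ell\sum_i p_i\log p_i^{(\ell)}.
\]
Since $\alpha_\ell\le 0$, Gibbs' inequality $\sum_i p_i\log p_i^{(\ell)}\le\sum_i p_i\log p_i$ lower-bounds the second sum by $(1-\alpha_{k^*})\sum_i p_i\log p_i$, and the two Shannon terms cancel to give $\log f_{\mathuline{\alpha}}(P)\ge 0$. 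Equality forces Gibbs equality at every $\ell$ with $\alpha_\ell<0$ (so $p^{(\ell)}=p$ there) and Jensen equality, which matches the stated characterisation because $\{k:\alpha_k\neq 0\}=\{k^*\}\cup\{\ell:\alpha_\ell<0\}$.

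For (ii), the limit identity in Proposition~\ref{prop:MatrixDivLim}(ii) represents $D^{\mathsf T}_{\mathuline{\beta}}(P)$ as $\lim_{\lambda\to\infty}D_{\mathuline{\alpha}^\lambda}(P)$ along a curve lying in $A_-\setminus\{e_1,\ldots,e_d\}$ for $\lambda>1$, so non-negativity is inherited from (i). For the equality case, setting $y_i:=\prod_k(p_i^{(k)})^{\beta_k}$, the hypothesis $\max_i y_i\le 1$ combined with $\sum_i p_i^{(k^*)}y_i=f_{e_{k^*}+\mathuline{\beta}}(P)\ge 1$ from (i) applied to $\mathuline{\alpha}=e_{k^*}+\mathuline{\beta}\in A_-\setminus\{e_1,\ldots,e_d\}$ forces $y_i=1$ throughout the support and hence equality in (i) at this $\mathuline{\alpha}$, which returns column equality on $\{k:\beta_k\neq 0\}$, as required. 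The main obstacle is the $A_-$ subcase of (i), where the opposite signs of $\alpha_{k^*}-1$ and of the $\alpha_\ell$ must be combined so that Jensen plus Gibbs yield a net non-negative bound with exactly the correct equality case.
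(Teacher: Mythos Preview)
Your proof is correct and takes a genuinely different route from the paper's. The paper argues non-negativity in one line from the monotonicity of $f_{\mathuline{\alpha}}$ (respectively $f^\T_{\mathuline{\beta}}$) together with $P\succeq(1\,\cdots\,1)$, and then handles the equality case structurally: if all columns of $P$ are distinct, then $P$ is power universal by Lemma~\ref{lem:MatrPowUniv}, so $f_{\mathuline{\alpha}}(P)=1$ would force $f_{\mathuline{\alpha}}$ to be degenerate, a contradiction; the general case is reduced to this one by induction on $d$, viewing $S^{d-1}\subseteq S^d$ via column duplication. Case (ii) is declared ``perfectly analogous''.

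Your argument is instead purely analytic: weighted AM--GM for the $A_+$ branch, and Jensen plus Gibbs for the $A_-$ branch, with the equality cases read off directly from the equality conditions of those classical inequalities; part (ii) is then obtained from (i) via the limit of Proposition~\ref{prop:MatrixDivLim} together with the neat sandwich $\sum_i p_i^{(k^*)}y_i\ge 1$ against $y_i\le 1$. Your approach is more elementary and self-contained---it avoids invoking the power-universal lemma (whose proof in the paper relies on a multiple-hypothesis-testing result) and gives the equality characterisation by direct inspection rather than by induction on $d$. The paper's approach, on the other hand, treats $A_+$ and $A_-$ uniformly without a case split and stays within the semiring philosophy that pervades the article. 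Both are clean; yours would make the proposition independent of Lemma~\ref{lem:MatrPowUniv}.
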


\begin{proof}
We formulate the proof for case (i), noting that case (ii) is perfectly analogous. We prove equivalently that $f_{\mathuline{\alpha}}(P) \ge 1$ with equality if and only if all columns $p^{(k)}$ with $\alpha_k \neq 0$ are equal. To this end, note that the inequality is obvious by monotonicity and because of
\begin{equation}
	\label{eq:Psucceq1}
	P \succeq \left( 1 \dots 1 \right),
\end{equation}
which holds by the assumption that $P$ is stochastic. For the condition of when equality holds, suppose first that no two columns of $P$ are equal. Then $P$ is power universal in $S^d$ by Lemma~\ref{lem:MatrPowUniv}. So if $f_{\mathuline{\alpha}}(P) = 1$ was the case, then $f_{\mathuline{\alpha}}$ would be degenerate, which by Proposition~\ref{prop:MatrMonHom} we know not to be the case. This finishes the proof in case that $P$ has distinct columns.

If $P$ has some coinciding columns, then let us assume without loss of generality that the final two columns of $P$ coincide. We reduce the problem to the case of smaller $d$, which lets us conclude the overall proof by induction on $d$. Duplicating the last column of the matrices under consideration defines a monotone semiring homomorphism $S^{d-1} \to S^d$, and in this way we may consider $S^{d-1} \subseteq S^d$ as a subsemiring with the induced preorder. Restricting the matrix divergence $D_{\mathuline{\alpha}}$ to this subsemiring defines a matrix divergence of the same type, but with the new parameter tuple
\[
	\mathuline{\alpha}' := (\alpha_1, \dots, \alpha_{d-2}, \alpha_{d-1} + \alpha_d)
\]
in place of $\mathuline{\alpha}$. If $\mathuline{\alpha}$ is supported on the final two columns, then the restricted homomorphism is degenerate, and the to be proven $f_\alpha(P) = 1$ follows by~\eqref{eq:Psucceq1}. Otherwise, the restricted homomorphism is still nondegenerate since $\mathuline{\alpha}'$ belongs to $A_+ \cup A_-$ but is still not a standard basis vector. Since $P$ belongs to the subsemiring $S^{d-1}$, we have completed our reduction to $d-1$.
\end{proof}

We now put things together and apply Theorem~\ref{thm:Fritz8.6} to $S^d$. Modulo some minor detail discussed in Remark~\ref{rem:TropicalDiff}, this provides a positive answer to a conjecture stated in Section 6 and Appendix K of \cite{Mu_et_al_2020}.

\begin{theorem}\label{theor:MatrMajSuff}
Let $P = (p^{(1)},\ldots,p^{(d)})$ and $Q = (q^{(1)},\ldots,q^{(d)})$ be $d$-tuples of probability vectors, each with uniform support across the tuple. In terms of the matrix divergences from~\eqref{eq:MatrDiv} and the Kullback-Leibler divergence $D_1(\cdot\|\cdot)$, suppose that
\begin{align}
D_{\mathuline{\alpha}}(P)&>D_{\mathuline{\alpha}}(Q),\label{eq:MatrMajNonTropic}\\
D^\T_{\mathuline{\beta}}(P)&>D^\T_{\mathuline{\beta}}(Q), \label{eq:MatrMajTropic}\\
D_1\left(p^{(k)}\middle\|p^{(\ell)}\right)&>D_1\left(q^{(k)}\middle\|q^{(\ell)}\right)\label{eq:MatrMaj_KL}
\end{align}
for all $\mathuline{\alpha} \in (A_+ \cup A_-) \setminus \{e_1, \dots e_d\}$, $\mathuline{\beta} \in B_- \setminus \{0\}$ and $k\neq\ell$, where the parameter sets are as in~\eqref{A-}--\eqref{B-}. Then, for all $n\in\N$ sufficiently large, there is a stochastic matrix $T_n$ such that
\begin{equation}
(q^{(k)})^{\otimes n}=T_n(p^{(k)})^{\otimes n} \qquad \forall k=1,\ldots,d.
\end{equation}
Conversely, if there is such $T_n$ for some $n \ge 1$, then the above inequalities hold non-strictly.
\end{theorem}

By Remark \ref{rem:MatrTropNorm}, in~\eqref{eq:MatrMajTropic} one can moreover impose a normalization condition such as $\beta_{\max} = 1$. Note also that the conditions~\eqref{eq:MatrMajNonTropic}--\eqref{eq:MatrMaj_KL} really form part of a continuous family of inequalities, as illustrated in Figure~\ref{fig:d3div}.

\begin{proof}
We already noted that the auxiliary assumptions of Theorem~\ref{thm:Fritz8.6} are satisfied. Thus it remains to be shown that the inequalities in the assumption match those that are obtained from there, as well as that $P = (p^{(1)}, \dots, p^{(d)})$ is power universal. The latter holds because the assumed~\eqref{eq:MatrMaj_KL} implies that $p^{(k)} \neq p^{(\ell)}$ for $k \neq \ell$, and hence $P$ is power universal by Lemma~\ref{lem:MatrPowUniv}.

By Proposition~\ref{prop:MatrMonHom}, the nondegenerate monotone homomorphisms $S^d \to \R_+$ are exactly the $f_{\mathuline{\alpha}}$ for $\alpha \in A_- \setminus \{e_1, \ldots, e_d\}$. But since
\begin{equation}
f_{\mathuline{\alpha}}(P) > f_{\mathuline{\alpha}}(Q) \qquad \Longleftrightarrow \qquad D_{\mathuline{\alpha}}(P) > D_{\mathuline{\alpha}}(Q)
\end{equation}
for such $\mathuline{\alpha}$ by $\alpha_{\max} > 1$, the inequalities indeed match for ${\mb K} = \R_+$. Similarly for ${\mb K} = \R_+^{\rm op}$, the relevant quantities are the $f_{\mathuline{\alpha}}$ with $\mathuline{\alpha} \in A_+$. Since then $\alpha_{\max} < 1$, we have
\begin{equation}
f_{\mathuline{\alpha}}(P) < f_{\mathuline{\alpha}}(Q) \qquad \Longleftrightarrow \qquad D_{\mathuline{\alpha}}(P) > D_{\mathuline{\alpha}}(Q),
\end{equation}
and again the inequalities match. For ${\mb K} = \T\R_+$, we clearly have
\begin{equation}
f^\T_{\mathuline{\beta}}(P) > f^\T_{\mathuline{\beta}}(Q) \qquad \Longleftrightarrow \qquad D^\T_{\mathuline{\beta}}(P) > D^\T_{\mathuline{\beta}}(Q),
\end{equation}
which also finishes this case. The case ${\mb K} = \T\R_+^{\rm op}$ is empty by Proposition~\ref{prop:MatrMonHom}.

It remains to consider the monotone derivations at $f_{e_k}$ for every $k = 1, \dots, d$. By Proposition~\ref{prop:MatrDerivation}, these derivations are all of the form $\Delta^{(k)}_{\mathuline{\gamma}}$ for $\gamma \in \R_+^d$. Since $\Delta^{(k)}_{\mathuline{\gamma}}$ depends additively on $\mathuline{\gamma}$, it is enough to postulate the inequalities for the extremal $\mathuline{\gamma}$, which are exactly the standard basis vectors. Moreover, the derivation $\Delta^{(k)}_{\mathuline{\gamma}}$ is non-zero if and only if $\gamma_\ell \neq 0$ for some $\ell \neq k$. Taking these statements together, we conclude that it is enough to consider the derivations $\Delta^{(k)}_{e_\ell}$ with $\ell \neq k$, and this results precisely in~\eqref{eq:MatrMaj_KL}.
\end{proof}

\begin{remark}\label{rem:TropicalDiff}
Translating the assumptions made by Mu {\it et al.}\ in the online appendix of \cite{Mu_et_al_2020} for their conjecture into our framework reveals the following difference in the sufficient condition: On the tropical matrix divergences $D^\T_{\mathuline{\beta}}$, they assume strict inequality only for $\mathuline{\beta} = e_k - e_\ell$ for $k,\ell = 1, \ldots, d$, $k\neq\ell$ (this is their genericity condition), while they assume non-strict inequality otherwise (implicitly by our Proposition~\ref{prop:MatrixDivLim}). In other words, our sufficient condition \eqref{eq:MatrMajTropic} for matrix majorization in large samples, which requires strict inequality on $D^\T_{\mathuline{\beta}}$ for all $\mathuline{\beta} \in B_- \setminus \{0\}$, is replaced by the condition
\begin{equation}
D_\infty\left(p^{(k)}\|p^{(\ell)}\right)>D_\infty\left(q^{(k)}\|q^{(\ell)}\right)
\end{equation}
for all $k,\,\ell\in\{1,\ldots,d\}$, $k\neq\ell$, in \cite{Mu_et_al_2020}. We do not know whether there are any $P,Q \in \mc V^d_{<\infty}$ that satisfy their assumptions but not ours.
\end{remark}

\begin{remark}
\label{rem:MatrMajCat}
Let us also note that, under the assumptions of Theorem~\ref{theor:MatrMajSuff} given by \eqref{eq:MatrMajNonTropic}, \eqref{eq:MatrMajTropic}, and \eqref{eq:MatrMaj_KL}, by item (b) of Theorem~\ref{thm:Fritz8.6} we also have the following catalytic result: defining, for every $n\in\N$ large enough, the probability vectors
\begin{equation}\label{eq:MatrCatalystForm}
r^{(k)}:=\frac{1}{n}\bigoplus_{s=0}^{n-1} \left(p^{(k)}\right)^{\otimes(n-1-s)}\otimes\left(q^{(k)}\right)^{\otimes s},
\end{equation}
for $k=1,\ldots,d$, there is a stochastic map $T_n$ such that $T_n(p^{(k)}\otimes r^{(k)})=q^{(k)}\otimes r^{(k)}$ for all $k=1,\ldots,d$.
\end{remark}

Now that we have provided sufficient and generically necessary conditions for catalytic matrix majorization, we note that as a corollary using the same preorder we can also obtain sufficient conditions for \textit{asymptotic catalytic matrix majorization}.

\begin{theorem}
\label{theor:MatrApproxCatMaj}
Let $n\in\N$ and let $\mc P_n$ be the set of probability vectors with $n$ entries. Let $P = (p^{(1)}, \dots, p^{(d)}) \in \mc P_n^d$ and $Q = (q^{(1)}, \dots, q^{(d)}) \in \mc P_n^d$ be $d$-tuples of probability vectors with full support such that $p^{(k)}\neq p^{(\ell)}$ for all $k \neq \ell$. Then the following conditions are equivalent: 
\begin{enumerate}[label=(\roman*)]
\item For every $\varepsilon > 0$ there exist $(q_\varepsilon^{(1)}, \ldots, q_\varepsilon^{(d)})\in\mc P_n^{d}$ and $(r_\varepsilon^{(1)}, \ldots, r_\varepsilon^{(d)}) \in \mc P_n^{d}$ and a stochastic matrix $T_\varepsilon$ such that
\begin{equation}
\| q^{(k)} - q_\varepsilon^{(k)} \|_1 \leq \varepsilon \qquad \forall k = 1, \dots, d,
\end{equation}
and
\begin{equation}\label{eq:MatrMajCatApprox}
T_\varepsilon(p^{(k)} \otimes r_\varepsilon^{(k)}) = q_\varepsilon^{(k)} \otimes r_\varepsilon^{(k)},\qquad \forall k=1,\ldots,d.
\end{equation}
\item For all $\mathuline{\alpha}\in (A_- \cup A_+) \setminus \{e_1, \dots, e_d\}$, we have that
\begin{equation}
D_{\mathuline{\alpha}}(P) \geq D_{\mathuline{\alpha}}(Q).
\end{equation}
\end{enumerate}
Moreover, for one $k\in\{1,\ldots,d\}$, we can take $q_\varepsilon^{(k)} = q^{(k)}$ in (i) for all $\varepsilon>0$.
\end{theorem}

\begin{proof}
Throughout this proof, we write $P = (p^{(1)}, \ldots, p^{(d)})$ and similarly for the other tuples. Let us first assume condition (ii) of the claim. For an arbitrary probability vector $w \in {\mc P_n}$, we define the $n \times d$ noise matrix 
\begin{equation}
	W = (w, \ldots, w).
\end{equation}
Define $Q_\varepsilon:=(1-\frac{\varepsilon}{2})Q+\frac{\varepsilon}{2} W$ for all $\varepsilon \in (0,2]$. Denoting the columns of $Q_\varepsilon$ by $q_\varepsilon^{(k)}$, we then have
\begin{equation}
\|q^{(k)}-q_\varepsilon^{(k)}\|_1=\left\|q^{(k)}-\left(1-\frac{\varepsilon}{2}\right)q^{(k)}-\frac{\varepsilon}{2} w\right\|_1\leq\frac{\varepsilon}{2}+\frac{\varepsilon}{2}=\varepsilon
\end{equation}
for every $k$. Taking $w := q^{(k)}$ with a fixed $k\in\{1,\ldots,d\}$ throughout the following proof will take care of the final claim.

We aim at proving that $Q_\varepsilon$ and $P$ satisfy all the relevant inequalities~\eqref{eq:MatrMajNonTropic}--\eqref{eq:MatrMaj_KL} strictly. Given any $\mathuline{\alpha}\in A_+\setminus\{e_1,\ldots,e_d\}$, we evaluate
\begin{align}
	D_{\mathuline{\alpha}}\left(Q_\varepsilon\right) &= \frac{1}{\alpha_{\max}-1}\log{\underbrace{f_{\mathuline{\alpha}}\left(\left(1-\frac{\varepsilon}{2}\right)Q+\frac{\varepsilon}{2}W\right)}_{\geq \left(1-\frac{\varepsilon}{2}\right)f_{\mathuline{\alpha}}(Q)+\frac{\varepsilon}{2}f_{\mathuline{\alpha}}(W)}}\\
	&\leq \frac{1}{\alpha_{\max}-1}\log{\left[\left(1-\frac{\varepsilon}{2}\right)f_{\mathuline{\alpha}}(Q)+\frac{\varepsilon}{2}f_{\mathuline{\alpha}}(W)\right]}\\[2pt]
	&\leq \frac{1}{\alpha_{\max}-1}\left[\left(1-\frac{\varepsilon}{2}\right)\log{f_{\mathuline{\alpha}}(Q)}+\frac{\varepsilon}{2}\log{f_{\mathuline{\alpha}}(W)}\right]\\[2pt]
	&=\left(1-\frac{\varepsilon}{2}\right)D_{\mathuline{\alpha}}(Q)+\frac{\varepsilon}{2}\underbrace{D_{\mathuline{\alpha}}(W)}_{=0} \leq D_{\mathuline{\alpha}}(Q)\leq D_{\mathuline{\alpha}}(P),
\end{align}
where we have used the concavity of $f_{\mathuline{\alpha}}$ (see the proof of Proposition~\ref{prop:MatrMonHom}) and the fact that $(\alpha_{\max}-1)^{-1}\log{}$ is non-increasing in the first inequality, and the convexity of $(\alpha_{\max}-1)^{-1}\log{}$ in the second inequality, and the assumption in the final inequality. If $D_{\mathuline{\alpha}}(Q)=0$, then the final inequality is strict by $D_{\mathuline{\alpha}}(P)>0$ due to the assumptions on $P$ and Proposition~\ref{prop:ConvexFaithful}. If $D_{\mathuline{\alpha}}(Q)>0$, then the penultimate inequality is strict. Thus, $D_{\mathuline{\alpha}}(Q_\varepsilon)<D_{\mathuline{\alpha}}(P)$ in all cases. Similarly, for $\mathuline{\alpha}\in A_-\setminus\{e_1,\ldots,e_d\}$, we can use essentially the same arguments to show that $D_{\mathuline{\alpha}}(Q_\varepsilon) < D_{\mathuline{\alpha}}(P)$ as well.

For the tropical case, let $\mathuline{\beta}\in B_- \setminus \{0\}$, and we will show that $D^\T_{\mathuline{\beta}}(Q_\varepsilon)<D^\T_{\mathuline{\beta}}(P)$. By Remark~\ref{rem:MatrTropNorm}, we can assume $\beta_{\max} = 1$. Then for example by the previous case and Proposition~\ref{prop:MatrixDivLim}, we have
\begin{equation}
	D^\T_{\mathuline{\beta}}(Q_\eps) \le D^\T_{\mathuline{\beta}}(Q),
\end{equation}
or equivalently
\begin{equation}
	\max_i \prod_{k=1}^d \left( \left(1 - \frac{\varepsilon}{2}\right) q^{(k)}_i + \frac{\varepsilon}{2} w_i \right)^{\beta_k} \le \max_i \prod_{k=1}^d \left( q^{(k)}_i \right)^{\beta_k}
\end{equation}
for every $\eps \in (0,2]$. The left-hand side is non-increasing in $\eps$ by $Q_{\varepsilon} \succeq Q_{\varepsilon'}$ for $\varepsilon \le \varepsilon'$ and coincides with the right-hand side as $\eps \to 0$. Now if this inequality is non-strict for some $\eps \in (0,2]$, then the left-hand side must be constant in $\eps$ all the way down to $\eps \to 0$. Since the left-hand side is a pointwise maximum of finitely many real-analytic functions, it follows that the maximum must be achieved on one and the same row index $i$ for all these $\eps$, and that the product is identically constant for this $i$. Again by real-analyticity, this implies that the left-hand side is constant across the whole range $[0,2]$. Therefore if the inequality is non-strict, then we must already have $D^\T_{\mathuline{\beta}}(Q) = D^\T_{\mathuline{\beta}}(W) = 0$.

Hence if $D^\T_{\mathuline{\beta}}(Q) > 0$, then we obtain $D^\T_{\mathuline{\beta}}(Q_\eps) < D^\T_{\mathuline{\beta}}(Q)$. And if $D^\T_{\mathuline{\beta}}(Q) = 0$, then we have $D^\T_{\mathuline{\beta}}(Q) < D^\T_{\mathuline{\beta}}(P)$ by Proposition~\ref{prop:ConvexFaithful}. In both cases, we obtain
\begin{equation}
	D^\T_{\mathuline{\beta}}(Q_\eps) \le D^\T_{\mathuline{\beta}}(Q) \le D^\T_{\mathuline{\beta}}(P),
\end{equation}
where at least one of the inequalities is strict, and therefore $D^\T_{\mathuline{\beta}}(Q_\eps) < D^\T_{\mathuline{\beta}}(P)$.

For the Kullback-Leibler divergences~\eqref{eq:MatrMaj_KL}, fix distinct column indices $k,\ell \in\{1,\ldots,d\}$. By convexity of $D_1$, we obtain\footnote{This convexity can also be concluded from the convexity of the $D_{\mathuline{\alpha}}$, which is an instance of~\eqref{eq:PhiConvex}, and writing $\Delta_{e_k - e_\ell}$ as a pointwise limit of the $D_{\mathuline{\alpha}}$ as per Proposition~\ref{prop:MatrixDivLim}.}
\begin{align}
	D_1 \left( q_\varepsilon^{(k)} \| q_\varepsilon^{(\ell)} \right) & \le \left(1-\frac{\varepsilon}{2}\right) D_1(q^{(k)} \| q^{(\ell)}) + \frac{\varepsilon}{2} \underbrace{D_1(w \| w)}_{=0} \nonumber \\
	      & \le D_1(q^{(k)} \| q^{(\ell)}) \le D_1(p^{(k)} \| p^{(\ell)}).
\end{align}
As before, if $D_1(q^{(k)} \| q^{(\ell)}) > 0$, then the penultimate inequality above is strict and, while if $D_1(q^{(k)} \| q^{(\ell)})=0$, then the final inequality is strict. All in all, we get $D_1(q_\varepsilon^{(k)} \| q_\varepsilon^{(\ell)}) < D_1(p^{(k)} \| p^{(\ell)})$ as well.

Thus, the conditions of Theorem~\ref{theor:MatrMajSuff} hold for $Q_\varepsilon$ (with any $\varepsilon\in(0,1]$) and $P$, and we conclude that there is a stochastic map $T_\varepsilon$ and a $d$-tuple of probability vectors $r_\varepsilon^{(k)}\in\mc P_n$ such that $T_\varepsilon(p^{(k)}\otimes r_\varepsilon^{(k)})=q_\varepsilon^{(k)}\otimes r_\varepsilon^{(k)}$ for all $k=1,\ldots,d$ (Remark~\ref{rem:MatrMajCat}).

For the converse direction, suppose that we have $\tilde{Q}$ as in the statement. Then applying $D_{\mathuline{\alpha}}$ to \eqref{eq:MatrMajCatApprox} and using monotonicity together with additivity produces $D_{\mathuline{\alpha}}(P) \ge D_{\mathuline{\alpha}}(\tilde{Q})$ for all $\mathuline{\alpha} \in A_- \cup A_+$. But then also $D_{\mathuline{\alpha}}(P) \ge D_{\mathuline{\alpha}}(Q)$ by the continuity of $D_{\mathuline{\alpha}}$ on $\R_{>0}^{n \times d}$.
\end{proof}

\subsection{Case \texorpdfstring{$d=2$}{d=2}: relative majorization}\label{subsec:d2}

Let us have a quick look at the case where $d=2$, i.e.,\ where the matrices reduce to {\it dichotomies} $(p^{(1)}, p^{(2)})\in\mc V^2_{<\infty}$. In this setting, matrix majorization is also called {\it relative majorization}.

In a dichotomy $(p,q)\in\mc V^2_{<\infty}$, the vectors $p$ and $q$ are required to have coinciding supports. However, let us remove this assumption for a while and consider just a pair $(p,q)$ of vectors with non-negative entries. Whenever two such vectors $p = (p_1,\ldots,p_n)$ and $q = (q_1,\ldots,q_n)$ are under joint consideration, we will assume that padding by zeros has been applied so that they have the same length $n$. With this in mind, and defining $0\log{0}:=0$, let us define the {\it R\'{e}nyi divergences} for all finite probability vectors $p$ and $q$ by
\begin{align}
	\label{eq:Renyirelentr}
	\!\!\! D_\alpha(p\|q)= {}&\left\{\begin{array}{@{}cl}
		-\log{\sum_{i\in{\rm supp}\,p}q_i} &{\rm if}\ \alpha = 0\ {\rm and}\ {\rm supp}\,p\cap{\rm supp}\,q\neq\emptyset, \\[2mm]
		\frac{1}{\alpha-1}\log{\sum_{i=1}^n p_i^\alpha q_i^{1-\alpha}} &{\rm if}\ \alpha \in (0,1)\ {\rm and}\ {\rm supp}\,p\cap{\rm supp}\,q\neq\emptyset, \\[2mm]
		\sum_{i=1}^n p_i\log{\frac{p_i}{q_i}} &{\rm if}\ \alpha = 1\ {\rm and}\ {\rm supp}\,p\subseteq{\rm supp}\,q, \\[2mm]
		\frac{1}{\alpha-1}\log{\sum_{i=1}^n p_i^\alpha q_i^{1-\alpha}} &{\rm if}\ \alpha \in (1,\infty)\ {\rm and}\ {\rm supp}\,p\subseteq{\rm supp}\,q,\\[2mm]
		\max_{i \in {\rm supp}\, q} \log \frac{p_i}{q_i} &{\rm if}\ \alpha = \infty\ {\rm and}\ {\rm supp}\,p\subseteq{\rm supp}\,q, \\[2mm]
		\infty&{\rm otherwise},
	\end{array}\right.
\end{align}
for all R\'enyi parameter values $\alpha \geq 0$. Note that $D_1$ is the Kullback-Leibler divergence. From Theorem~\ref{theor:MatrMajSuff}, we obtain a variation of the main result of \cite{Mu_et_al_2020}, namely the following:

\begin{corollary}\label{cor:RelativeMaj}
Let $p^{(1)}$ and $p^{(2)}$ be probability vectors with coinciding supports, and likewise for $q^{(1)}$ and $q^{(2)}$. If
\begin{align}
D_\alpha\left(p^{(1)}\middle\|p^{(2)}\right)&>D_\alpha\left(q^{(1)}\middle\|q^{(2)}\right),\label{eq:Renyiright}\\
D_\alpha\left(p^{(2)}\middle\|p^{(1)}\right)&>D_\alpha\left(q^{(2)}\middle\|q^{(1)}\right)\label{eq:Renyileft}
\end{align}
for all $\alpha\in[1/2,\infty]$, then, for every $n\in\N$ large enough, we have
\begin{equation}
\left(\big(p^{(1)}\big)^{\otimes n},\big(p^{(2)}\big)^{\otimes n}\right)\succeq\left(\big(q^{(1)}\big)^{\otimes n},\big(q^{(2)}\big)^{\otimes n}\right).
\end{equation}
\end{corollary}

\begin{proof}
Let us assume that \eqref{eq:Renyiright} and \eqref{eq:Renyileft} hold for all $\alpha\in[1/2,\infty]$ and write $p^{(k)}=(p^{(k)}_1,\ldots,p^{(k)}_m)$ and $q^{(k)}=(q^{(k)}_1,\ldots,q^{(k)}_n)$ for $k=1,2$, where all the entries are positive. First, let $\alpha\in\R\setminus\{0,1\}$. One may easily check that
\begin{equation}
D_{(\alpha,1-\alpha)}(p^{(1)},p^{(2)})=\left\{\begin{array}{ll}
		D_\alpha(p^{(1)}\|p^{(2)}),&\alpha\in[1/2,1)\cup(1,\infty),\\[2pt]
		D_{1-\alpha}(p^{(2)}\|p^{(1)}),&\alpha\in(-\infty,0)\cup(0,1/2).
\end{array}\right.
\end{equation}
Thus, \eqref{eq:Renyiright} and \eqref{eq:Renyileft} with $\alpha\in(0,1)\cup(1,\infty)$ correspond to the conditions \eqref{eq:MatrMajNonTropic} in the case $d=2$. Next, let $\beta>0$. Recalling Remark~\ref{rem:MatrTropNorm} one easily sees that
\begin{align}
	D^\T_{(\beta,-\beta)}(p^{(1)},p^{(2)})&=D^\T_{(1,-1)}(p^{(1)},p^{(2)})=\max_{1\leq i\leq m}\log{\frac{p^{(1)}_i}{p^{(2)}_i}}=D_\infty(p^{(1)}\|p^{(2)}),\\
	D^\T_{(-\beta,\beta)}(p^{(1)},p^{(2)})&=D^\T_{(-1,1)}(p^{(1)},p^{(2)})=\max_{1\leq i\leq m}\log{\frac{p^{(2)}_i}{p^{(1)}_i}}=D_\infty(p^{(2)}\|p^{(1)}).
\end{align}
Thus, \eqref{eq:Renyiright} and \eqref{eq:Renyileft} with $\alpha=\infty$ correspond to the conditions \eqref{eq:MatrMajTropic} in the case $d=2$. Finally, \eqref{eq:Renyiright} and \eqref{eq:Renyileft} with $\alpha=1$ clearly correspond to conditions \eqref{eq:MatrMaj_KL} in the case $d=2$. All in all, the matrix divergences have a simple connection to the R\'{e}nyi divergences, see also Figure~\ref{fig:d2div}. Thus, the claim follows from Theorem~\ref{theor:MatrMajSuff}.
\end{proof}

Let us also note that Theorem \ref{theor:MatrApproxCatMaj} implies the following result for catalytic asymptotic relative majorization which  can be seen as a strengthening of Theorem 20 in \cite{GoToma2022}.

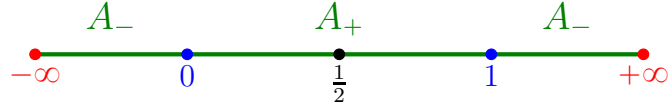
\begin{figure}
\captionsetup{width=.9\linewidth}
\begin{tikzpicture}
\draw[green!50!black, ultra thick] (-4,0) -- node[above=2pt]{\begin{large}$A_-$\end{large}} (-2,0);
\draw[green!50!black, ultra thick] (-2,0) -- node[above=2pt]{\begin{large}$A_+$\end{large}} (2,0);
\draw[green!50!black, ultra thick] (2,0) -- node[above=2pt]{\begin{large}$A_-$\end{large}} (4,0);
\filldraw [red] (-4,0) circle (2pt) node[anchor=north]{$-\infty$};
\filldraw [red] (4,0) circle (2pt) node[anchor=north]{$+\infty$};
\filldraw [blue] (-2,0) circle (2pt) node[anchor=north]{$0$};
\filldraw [blue] (2,0) circle (2pt) node[anchor=north]{$1$};
\filldraw [black] (0,0) circle (2pt) node[anchor=north]{$\frac{1}{2}$};
\end{tikzpicture}
\caption{Here we illustrate the test spectrum of the relative majorization semiring $S^2$. It corresponds to the set of R\'{e}nyi divergences as is shown in the proof of Corollary~\ref{cor:RelativeMaj}, which is here drawn as the two-point compactification of $\R$. On the right-hand side of the centre point $\alpha=1/2$ (in black), we have the divergences $(p^{(1)},p^{(2)})\mapsto D_\alpha(p^{(1)}\|p^{(2)})$, and on the left-hand side of the centre point, we have the divergences $(p^{(1)},p^{(2)})\mapsto D_\alpha(p^{(2)}\|p^{(1)})$. Points $\alpha=0,1$ (in blue) correspond to the two essential derivations which are simply the Kullback-Leibler divergences with the two different orderings of its arguments, and the points $\alpha=\pm\infty$ (in red) correspond to the divergence $D_\infty$ with the two different orderings.}
\label{fig:d2div}
\end{figure}

\begin{corollary}
For a dichotomy $(p^{(1)},p^{(2)})$ of probability vectors with common support and another dichotomy $(q^{(1)},q^{(2)})$ of probability vectors with common support, both in $\mc P_n^2$, the following are equivalent:
\begin{enumerate}[label=(\roman*)]
\item There is a sequence $(q^{(1)}_k)_{k=1}^\infty$ of probability vectors in $\mc P_n$ with $\|q^{(1)}-q^{(1)}_k\|_1\to0$ as $k\to\infty$ and a sequence $(r^{(1)}_k,r^{(2)}_k)_{k=1}^\infty$ of dichotomies of probability vectors such that
\begin{equation}
\left(p^{(1)}\otimes r^{(1)}_k,p^{(2)}\otimes r^{(2)}_k\right)\succeq\left(q^{(1)}_k\otimes r^{(1)}_k,q^{(2)}\otimes r^{(2)}_k\right)
\end{equation}
for all $k \in \mathbb{N}$.
\item For all $\alpha\geq1/2$,
\begin{align}
D_\alpha\left(p^{(1)}\middle\|p^{(2)}\right)&\geq D_\alpha\left(q^{(1)}\middle\|q^{(2)}\right),\\
D_\alpha\left(p^{(2)}\middle\|p^{(1)}\right)&\geq D_\alpha\left(q^{(2)}\middle\|q^{(1)}\right).
\end{align}
\end{enumerate}
\end{corollary}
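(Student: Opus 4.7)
The plan is to reduce this corollary directly to Theorem~\ref{theor:MatrApproxCatMaj} in the case $d = 2$, leveraging the dictionary between R\'enyi divergences and matrix divergences already established in the proof of Corollary~\ref{cor:RelativeMaj}. Writing $P = (p^{(1)}, p^{(2)})$ and $Q = (q^{(1)}, q^{(2)})$, for $\alpha \in [1/2, 1) \cup (1, \infty)$ one has $D_{(\alpha, 1-\alpha)}(P) = D_\alpha(p^{(1)} \| p^{(2)})$ and $D_{(1-\alpha, \alpha)}(P) = D_\alpha(p^{(2)} \| p^{(1)})$; the endpoint $\alpha = \infty$ corresponds to the tropical quantities $D^\T_{(1,-1)}$ and $D^\T_{(-1,1)}$; and $\alpha = 1$ corresponds to the two nonzero monotone derivations from Proposition~\ref{prop:MatrDerivation}, which produce the Kullback--Leibler divergences $D_1(p^{(1)} \| p^{(2)})$ and $D_1(p^{(2)} \| p^{(1)})$ (cf.\ Figure~\ref{fig:d2div}). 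As a result, the family of R\'enyi inequalities in (ii) matches exactly the set of non-strict inequalities in condition (ii) of Theorem~\ref{theor:MatrApproxCatMaj} for $d = 2$.

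For (ii) $\Rightarrow$ (i), I will first dispose of the degenerate case $p^{(1)} = p^{(2)}$ separately: the hypothesis then gives $D_\alpha(q^{(1)} \| q^{(2)}) \leq 0$ for all $\alpha \geq 1/2$, which forces $q^{(1)} = q^{(2)}$, so (i) holds trivially with $q_k^{(1)} := q^{(1)}$ and trivial catalysts. In the remaining case $p^{(1)} \neq p^{(2)}$, restricting to the common support within each dichotomy reduces to the full-support hypothesis of Theorem~\ref{theor:MatrApproxCatMaj}. Invoking that theorem with distinguished index $k = 2$ keeps the second column equal to $q^{(2)}$ exactly, and setting $\varepsilon = 1/k$ produces the sequence required in (i).

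For (i) $\Rightarrow$ (ii), I will apply each nondegenerate monotone homomorphism and each nonzero monotone derivation of Propositions~\ref{prop:MatrMonHom} and~\ref{prop:MatrDerivation} to the assumed majorization relation. Monotonicity under stochastic maps, combined with multiplicativity of homomorphisms and the Leibniz rule for derivations (using that all columns are normalized, so that $\|r_k^{(j)}\|_1 = 1$), makes the catalysts cancel on both sides, yielding non-strict inequalities between the values on $P$ and on $(q_k^{(1)}, q^{(2)})$ for each $k$. I will then pass to the limit $k \to \infty$, using continuity of the relevant monotones on the interior of the common support; the columns $q_k^{(1)}$ remain within $\mathrm{supp}(q^{(2)}) = \mathrm{supp}(q^{(1)})$, since the matrix $(q_k^{(1)} \otimes r_k^{(1)}, q^{(2)} \otimes r_k^{(2)})$ is required to have columns of common support by the very definition of $\mc V^2_n \subset S^2$. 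Translating back through the dictionary above gives the non-strict R\'enyi inequalities in (ii).

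The only mild obstacle is verifying the continuity step and the support reduction; both are straightforward once the matrix-divergence to R\'enyi-divergence correspondence is in hand. No new technical idea is required beyond what is already present in the proofs of Theorem~\ref{theor:MatrApproxCatMaj} and Corollary~\ref{cor:RelativeMaj}.
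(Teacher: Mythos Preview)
Your proposal is correct and takes essentially the same approach as the paper: the paper's proof consists of a single sentence stating that the result ``follows directly from Theorem~\ref{theor:MatrApproxCatMaj} and the identification of the divergences $D_{(\alpha,1-\alpha)}$ with the R\'enyi divergences as seen in the preceding proof,'' and you have simply unpacked this in more detail. Your explicit treatment of the degenerate case $p^{(1)} = p^{(2)}$ (which falls outside the hypotheses of Theorem~\ref{theor:MatrApproxCatMaj}) and your observation that the common-support requirement on elements of $\mc V^2_{<\infty}$ forces $\mathrm{supp}\,q_k^{(1)} = \mathrm{supp}\,q^{(2)}$ are useful points of completeness that the paper leaves implicit.
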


The proof follows directly from Theorem~\ref{theor:MatrApproxCatMaj} and the identification of the divergences $D_{(\alpha,1-\alpha)}$ with the R\'{e}nyi divergences as seen in the preceding proof. Note that only $q^{(1)}$ is approximated but not $q^{(2)}$, or in other words we require exact catalytic majorization in the second component of the dichotomies, which can be achieved by Theorem~\ref{theor:MatrApproxCatMaj}. Naturally, we may also do it the other way around instead. Theorem 20 of~\cite{GoToma2022} is the same result but with approximations for both $q^{(1)}$ and $q^{(2)}$, and it is in this sense that our present result is stronger.

\section{Conclusion}
\label{sec:conclusion}

In this work we have connected several problems of majorization to the theory of preordered semirings. Using this framework, we have derived new sufficient and generically necessary conditions for large-sample and catalytic matrix majorization, i.e.,\ Blackwell dominance in the finite-outcome setting. In this latter context, we have in particular identified the relevant monotone quantities as \emph{matrix $\mathuline{\alpha}$-divergences} which can be viewed as a generalization of the one-parameter family of the standard R\'{e}nyi divergences.

We would like to point out that this theory has some well-known connections not only to problems of classical statistics, but also to \emph{resource theories} in the context of quantum information processing. All basic quantum devices\,---\,quantum states, channels, and observables\,---\,can also be described as suitable generalizations of statistical experiments (see, e.g.,\ \cite{Kuramochi2017}). Specifically, in the resource theory of entanglement of pure quantum states, where we may say that a pure (vector) state $\fii$ of a compound system $AB$ is `above' another pure state $\psi$ if there is a local-operations-and-classical-communication (LOCC) quantum channel $\Phi$ such that $\Phi(|\fii\>\<\fii|)=|\psi\>\<\psi|$. It is well-known \cite{nielsen99} that this happens if and only if the Schmidt vector of $\psi$ majorizes that of $\fii$. Thus one obtains conditions for catalytic LOCC-ordering for pure states from Theorem \ref{thm:klimeshTheorem}, as also pointed out in \cite{klimesh2007inequalities}. Similar results also hold for incoherent transformations of pure states \cite{DuBaiGuo2015}.

Multiple majorization of quantum states is also a relevant question. This is simply the quantum counterpart of matrix majorization: Given $d$-tuples $(\rho_1,\ldots,\rho_d)$ and $(\sigma_1,\ldots,\sigma_d)$ of quantum states (possibly on different systems), we may ask if there is a quantum channel $\Lambda$ such that $\Lambda(\rho_k)=\sigma_k$ for $k=1,\ldots,d$. The one-shot conditions for this problem can be obtained, e.g.,\ from \cite{Chefles2009,Shmaya2005}. The algebraic machinery consisting of the \emph{Vergleichsstellens\"atze} from \cite{Fritz2021a,Fritz2021b} is applicable also in this quantum case: We can build a quantum majorization semiring of polynomial growth in the same way as we did for matrix majorization. In this way, majorization of quantum statistical experiments in large samples or catalytically can be characterized by inequalities involving the monotone homomorphisms and monotone derivations (jointly forming its test spectrum) on the quantum majorization semiring.

The problem with this approach is, however, that the characterization of the test spectrum of the quantum majorization semiring remains, for now, intractable. For one thing, we cannot similarly decompose the additive maps in the quantum case as we did in Propositions \ref{prop:MatrMonHom} and \ref{prop:MatrDerivation}, because the states in the $d$-tuple typically have different eigenbases. Problems arise already in the relative case $d=2$ \cite{Tomamichel2016Book}, where an innumerable family of quantum divergences generalizing the classical R\'{e}nyi divergences has been defined and we do not yet know which of them are actually relevant for majorization questions. Therefore we cannot give specific and explicit conditions for quantum majorization in large samples or catalytically yet, even in the case $d=2$. Some special cases can be investigated with the methodology presented in the current work, but this will be the topic of future research. Also, using relaxed preorders like a quantum version of submajorization may be fruitful, as then one is sometimes able to narrow things down to a manageable family of sufficient conditions \cite{perry2021semiring, bunth2021asymptotic}.

\appendix

\section{The majorization and submajorization semirings}\label{sec:majsemirings}

In these appendices we concentrate on (simple) majorization of probability vectors and rederive some previously established results on large-sample and catalytic majorization in this mode. We will see that, as in the case of matrix majorization, the new algebraic machinery at our disposal is remarkably powerful in questions on simple majorization as well.

A probability vector $p$ is said to majorize $q$ whenever there exists a bistochastic matrix $T$ such that $Tp = q$. Here, we view $p$ and $q$ as vectors of the same size $n$ by padding with zeros if necessary, and $T$ being bistochastic means that it is an $(n\times n)$-matrix with non-negative entries whose rows and columns all sum up to 1. According to the Hardy-Littlewood-P\'{o}lya theorem, $p$ majorizes $q$ if and only if
\begin{align}\label{eq:majorizationProb}
\sum_{i=1}^k p^\downarrow_i\geq\sum_{i=1}^k q^\downarrow_i
\end{align}
for all $k=1,\ldots,n$, where $p^\downarrow=(p_i^\downarrow)_{i=1}^n$ and $q^\downarrow=(q^\downarrow_i)_{i=1}^n$ are the vectors obtained from $p$ and $q$ by listing their entries in non-increasing order. The definition generalizes to vectors $p$ and $q$ having non-negative entries, i.e.,\ we do not have to assume that $\|p\|_1=1=\|q\|_1$. Given the importance of majorization and related preorders, our aim in the following subsections is to analyze majorization algebraically in terms of a commutative semiring of nonnegative vectors equipped with the majorization preorder or one of its variants, and to show how when using the general theory of separation theorems introduced in Section \ref{sec:preliminaries}, we recover some of the classical results on catalytic and large-sample transformations, which were originally proven by more specific methods that do not apply to other cases.

\subsection{Majorization Semirings}

Let us concentrate on finitely supported vectors with non-negative entries. These form the set $\mc V_{<\infty}:=\bigcup_{n=1}^\infty\R_+^n$. For every $p\in \mc V_{<\infty}$, denote by $p^\downarrow$ the decreasing rearrangement of $p$, so that the vectors $p^\downarrow$ and $p$ have the same components up to permutation. We consider $p,q\in\mc V_{<\infty}$ as equivalent, and denote this by $p\approx q$, if $p^\downarrow$ and $q^\downarrow$ coincide modulo padding by zeros. We write $\mc V_{<\infty}/\!\approx$ for the set of the $\approx$-equivalence classes $[p]$. We abuse notation by writing $1:=[(1)]$, the equivalence class of the single-element vector with entry $1$, and $0:=[(0)]$. The sum and product of $[p],[q]\in\mc V_{<\infty}/\!\approx$ are defined by
\begin{equation}
[p]+[q]:=[p\oplus q],\qquad [p]\cdot[q]:=[p\otimes q],
\end{equation}
where the order of components in the Kronecker product is irrelevant up to $\approx$.

\begin{definition}
We define three preorders on $\mc V_{<\infty}/\!\approx$ as follows. Given two generic elements $p=(p_1,\ldots,p_n)\in\mc V_{<\infty}$ and $q=(q_1,\ldots,q_n)\in\mc V_{<\infty}$, we assume that the vectors $p$ and $q$ have the same length $n$, which can always be achieved up to $\approx$ by padding with zeros.
\begin{enumerate}[label={(\alph*)}]
\item {\it Submajorization preorder}: $[q]\rleq[p]$ if, for all $k=1,\ldots,n$, 
\begin{equation}
	\sum_{i=1}^k q^\downarrow_i\leq\sum_{i=1}^k p^\downarrow_i,
\end{equation}
or equivalently if there exists a bistochastic matrix $T=(T_{i,j})_{i,j=1}^n\in\mc M_n(\R_+)$ such that $q \leq Tp$ (entrywise inequality). When equipped with the submajorization order, $\mc V_{<\infty}/\!\approx$ becomes a preordered semiring (see below). We denote it by $S$ and call it the {\it submajorization semiring}. Between individual vectors $p$ and $q$, we denote from now on $p\minor q$ or $q\major p$ when $[q]\rleq[p]$.
\item {\it Majorization preorder}: $[q]\rleq_1 [p]$ if $[q]\rleq[p]$ and $\|p\|_1=\|q\|_1$.
When equipped with the majorization order, $\mc V_{<\infty}/\!\approx$ becomes a preordered semiring which we call the {\it majorization semiring} and denote by $S_1$. Between individual vectors $p$ and $q$, we denote from now on $p\minor_1 q$ or $q\major_1 p$ when $[q]\rleq_1[p]$.
\item {\it Modified majorization order}: $[q]\rleq_{0,1}[p]$ if $[q]\rleq_1[p]$ and $\|p\|_0=\|q\|_0$, where $\|\cdot\|_0$ denotes the size $|{\rm supp}\,p|$ of the support of $p$, i.e.,\ the number of outcomes $i$ such that $p_i>0$. When equipped with the modified majorization order, $\mc V_{<\infty}/\!\approx$ becomes a preordered semiring which we call the {\it modified majorization semiring} and denote by $S_{0,1}$. Between individual vectors $p$ and $q$, we denote from now on $p\minor_{0,1} q$ or $q\major_{0,1} p$ when $[q]\rleq_{0,1}[p]$.
\end{enumerate}
\end{definition}

\begin{proposition}
	$S$, $S_1$ and $S_{0,1}$ are preordered semirings.
\end{proposition}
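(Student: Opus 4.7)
The plan is to unwind the three claims in parallel, since all three semiring structures share the same underlying set $\mc V_{<\infty}/\!\approx$ with the same sum and product, differing only in the preorder. Accordingly, I would first check that $(\mc V_{<\infty}/\!\approx,+,\cdot,0,1)$ is a commutative semiring, and then separately verify, for each of $\rleq$, $\rleq_1$, $\rleq_{0,1}$, that it is a preorder compatible with $+$ and $\cdot$.

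The semiring structure is routine: well-definedness of $[p]+[q]:=[p\oplus q]$ and $[p]\cdot[q]:=[p\otimes q]$ on $\approx$-classes follows because both $p\oplus q$ and $p\otimes q$ only depend on the multisets of entries of $p$ and $q$ (up to padding by zeros). Commutativity and associativity of $\oplus$ and $\otimes$, together with distributivity $(p\oplus q)\otimes r=(p\otimes r)\oplus(q\otimes r)$, hold on the nose up to permutation of entries, hence up to $\approx$. The element $0=[(0)]$ absorbs multiplication since $(0)\otimes p\approx(0)$, and $1=[(1)]$ acts trivially under $\otimes$; padding by zeros is exactly the neutrality of $0$ under $\oplus$.

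For the submajorization semiring $S$, reflexivity and transitivity of $\rleq$ are immediate from the partial-sum characterisation. The key step is compatibility with $+$ and $\cdot$, and here I would use the bistochastic characterisation: assuming $[q]\rleq [p]$, choose bistochastic $T$ with $Tq\geq p$. For any $r\in\mc V_{<\infty}$ of length $m$, the block-diagonal matrix $T\oplus I_m$ is bistochastic, and
\begin{equation}
(T\oplus I_m)(q\oplus r)=(Tq)\oplus r\geq p\oplus r,
\end{equation}
giving $[q\oplus r]\rleq[p\oplus r]$. Similarly, $T\otimes I_m$ is bistochastic (a tensor product of bistochastic matrices is bistochastic), and since $r$ has non-negative entries,
\begin{equation}
(T\otimes I_m)(q\otimes r)=(Tq)\otimes r\geq p\otimes r,
\end{equation}
yielding $[q\otimes r]\rleq[p\otimes r]$. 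These two identities are the main computational content of the proposition.

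For the majorization and modified majorization preorders $\rleq_1$ and $\rleq_{0,1}$, I would point out that they are obtained from $\rleq$ by adjoining the conditions $\|p\|_1=\|q\|_1$ and, respectively, $\|p\|_0=\|q\|_0$, both of which are also invariant under $\approx$. Hence reflexivity, transitivity, and compatibility with $+$ and $\cdot$ are inherited from $S$ once one observes that both $\|\cdot\|_1$ and $\|\cdot\|_0$ are additive under $\oplus$ and multiplicative under $\otimes$, so that $\|p\|_1=\|q\|_1$ entails $\|p\oplus r\|_1=\|q\oplus r\|_1$ and $\|p\otimes r\|_1=\|q\otimes r\|_1$, and similarly for $\|\cdot\|_0$. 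No part of this argument is genuinely hard; if there is a step where one has to be careful, it is only the check that the tensor product of two bistochastic matrices is again bistochastic, which is a short direct verification on row and column sums.
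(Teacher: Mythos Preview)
Your proposal is correct and follows essentially the same approach as the paper: the core step for $S$ uses the block-diagonal matrix $T\oplus I_m$ and the tensor $T\otimes I_m$ to establish compatibility of the preorder with $+$ and $\cdot$, and the cases $S_1$ and $S_{0,1}$ are deduced by observing that $\|\cdot\|_1$ and $\|\cdot\|_0$ are semiring homomorphisms. You are somewhat more explicit than the paper in verifying the plain semiring axioms and reflexivity/transitivity, but this is only additional detail, not a different strategy.
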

\begin{proof}[Proof for $S$]
For $p,q \in\mc V_{<\infty}$, suppose that $q \major p$ is witnessed by a bistochastic matrix $T$ with $Tp \ge q$,\footnote{Recall that the symbol $\geq$ here denotes entry-wise inequality between vectors.} where we assume without loss of generality that $p$ and $q$ have the same length $n$. We need to show that this majorization relation is stable under addition and multiplication by any $r \in\mc V_{<\infty}$. Let us define the block matrix $S:=T\oplus I_m$, where $m$ is the size of $r$ and $I_m$ is the $(m\times m)$-identity matrix. Thus
\begin{equation}
S(p \oplus r) = Tp \oplus r \geq q \oplus r.
\end{equation}
Clearly $S$ is bistochastic, and hence $[q] + [r] \rleq [p] + [r]$.

Similarly, by using the map $T$ tensored with the identity matrix $I_m$, we obtain
\begin{equation}
(T \otimes I_m)(p \otimes r) = Tp\otimes r \geq q \otimes r,
\end{equation}
and therefore the required $[p] \cdot [r] \rgeq [q] \cdot [r]$.
\end{proof}

\begin{proof}[Proof for $S_1$ and $S_{0,1}$]
By definition, the preorder on $S_1$ is the intersection of the preorder on $S$ with the relation of having equal $\|\cdot\|_1$. Since the latter map is a homomorphism $S \to \R_+$, the relation of having equal $\|\cdot\|_1$ is itself a (symmetric) semiring preorder. This implies the claim for $S_1$ since the intersection of two semiring preorders is again a semiring preorder. The proof for $S_{0,1}$ is analogous.
\end{proof}

\begin{remark}\label{S_group_semiring}
For algebraically minded readers, it may help to note that the plain semiring structure (that is, without the preorder) of our majorization semirings is exactly that of the \emph{group semiring} $\N[\R_{>0}]$, by which we mean the semiring consisting of all formal sums of elements of the group $(\R_{>0}, \cdot)$, where two such formals sums multiply by first applying distributivity and then using multiplication in $\R_{>0}$ in order to get a new formal sum. This semiring coincides with our $S$, since indeed every element of $S$ can be viewed as a formal sum of positive reals, and these formal sums multiply in the way just described.

This group semiring is characterized by the universal property that the homomorphisms $S \to T$ to any other semiring $T$ correspond bijectively to the group homomorphisms $\R_{>0} \to T^\times$, where $T^\times$ is the group of units of $T$. This universal property is what will enable us below to find a simple characterization of the monotone homomorphisms on our majorization semirings.
\end{remark}

In order to make use of the separation theorems on our majorization semirings, we need to show that these preordered semirings are all of polynomial growth.

\begin{proposition}\label{prop:PolynomialGrowth}
The preordered semirings $S$, $S_1$, and $S_{0,1}$ are of polynomial growth. Moreover, $S$ and $S_1^{\rm op}$ have power universal elements.
\end{proposition}

\begin{proof}
Let us first consider the submajorization semiring $S$. The single-entry element $[(a)]=:u$ with any $a>1$ is a power universal. To see this, let $p=(p_1,\ldots,p_n)\in\mc V_{<\infty}$ and $q=(q_1,\ldots,q_n)\in\mc V_{<\infty}$ be non-zero. It follows immediately that, for $a>1$, there is $k\in\N$ such that
\begin{equation}
\sum_{i=1}^\ell q^{\downarrow}_i\leq a^k\sum_{i=1}^\ell p^{\downarrow}_i
\end{equation}
for all $\ell=1,\ldots,n$, i.e., $a^k p \minor q$. This means that $u$ is a power universal in $S$. We could similarly show that also $S^{\rm op}$ has a power universal element given by $[(a)]$ where $0<a<1$.

Let us go on to the majorization semiring $S_1$. We show that $S_1^{\rm op}$ has a power universal element so that $S_1^{\rm op}$ and, hence, $S_1$ is of polynomial growth. Concretely, we prove that $[r]$ where $r=(t,1-t)$ with any $t\in(0,1)$ is a power universal for $S_1^{\rm op}$. We may freely assume that $t\geq 1/2$. Let $p,q\in\mc V_{<\infty}$ be such that $\|p\|_1=\|q\|_1$. We need to show that there is $k\in\N$ such that $p\minor_1 r^{\otimes k}\otimes q$. Denote the smallest non-zero entry of $p$ by $p_{\rm min}$ and the largest entry of $q$ by $q_{\rm max}$. It is immediate that for every sufficiently large $k\in\N$, the largest entry $t^k q_{\rm max}$ of $r^{\otimes k}\otimes q$ is upper bounded by $p_{\rm min}$. This implies $p\minor_1 r^{\otimes k}\otimes q$. More concretely, we may choose any
\begin{equation}
k\geq\frac{\log{p_{\rm min}}-\log{q_{\rm max}}}{\log{t}}
\end{equation}
in order to get the desired $p \minor_1 r^{\otimes k} \otimes q$.

Let us finally show that the modified majorization semiring $S_{0,1}$ is of polynomial growth. For every $s\in\N$, recall that we write
\begin{equation}
	u_s := (1/s,\ldots,1/s)
\end{equation}
for the uniform distribution on $s$ outcomes. Define $t_-:=u_2$ and $t_+:=(1-\eps,\eps)$ for every fixed $\eps\in(0,1/2)$. We then show that $([t_-], [t_+])$ is a power universal pair for $S_{0,1}$. Indeed $t_- \major_{0,1} t_+$ is obvious. For the main condition, it is enough to show that for every $p,q\in\mc V_{<\infty}$ with $\|p\|_1=\|q\|_1$ and $\|p\|_0=\|q\|_0$, there is $k\in\N$ such that $t_-^{\otimes k}\otimes p\major_{0,1} t_+^{\otimes k}\otimes q$. Clearly we may assume that $\|p\|_1=1=\|q\|_1$. Let us denote $n:=\|p\|_0=\|q\|_0$ and by $p_{\rm min}$ the smallest non-zero entry of $p$. Assuming $n \ge 2$, we now define
\begin{equation}
\tilde{p}:=\Big(1-p_{\rm min},\underbrace{\frac{p_{\rm min}}{n-1},\ldots,\frac{p_{\rm min}}{n-1}}_{n-1\ {\rm copies}}\Big),
\end{equation}
so that $\tilde{p}\minor_{0,1} p$. We next prove that there is $k\in\N$ such that $t_-^{\otimes k}\otimes\tilde{p}\major_{0,1} t_+^{\otimes k}\otimes u_n$. This will imply what we are trying to show because
\begin{equation}
t_-^{\otimes k}\otimes p\major_{0,1} t_-^{\otimes k}\otimes\tilde{p}\major_{0,1} t_+^{\otimes k}\otimes u_n\major_{0,1} t_+^{\otimes k}\otimes q,
\end{equation}
where the final inequality follows from the fact that $q\minor_{0,1} u_n$. Thus, we next show the validity of the middle inequality above for sufficiently large $k\in\N$.

The initial partial sums of $(t_-^{\otimes k}\otimes\tilde{p})^{\downarrow}$ increase linearly until the $2^k$-th sum, where they reach the value $1-p_{\rm min}$. Afterwards the initial sums grow again linearly until finally reaching the value $1$. Due to the fact that the graph of initial sums is a concave function reaching the value 1 at the final sum, in order to find $k$ so that $t_-^{\otimes k}\otimes\tilde{p}\major_{0,1} t_+^{\otimes k}\otimes u_n$, it is enough to guarantee that the $2^k$-th initial sum of $(t_+^{\otimes k}\otimes u_n)^{\downarrow}$ is at least $1-p_{\rm min}$.

We can bound this initial sum as
\begin{equation}
\sum_{i=1}^{2^k} (t_+^{\otimes k}\otimes u_n)^{\downarrow}_i\geq \sum_{j=0}^{a} n \binom{k}{j} \frac{(1-\eps)^{k - j} \eps^j}{n}=\sum_{j=0}^{a} \binom{k}{j} (1-\eps)^{k - j} \eps^j \label{eq:binomial}
\end{equation}
for every $a \in [0, k]$ such that $\sum_{j=0}^{a} n \binom{k}{j} \leq 2^k$. The right-hand side is the cumulative distribution function at $a$ of a binomial random variable with mean $\eps k$. Let us now choose $\delta > 0$ such that $\eps + 2 \delta < \frac12$ and $k$ with $k\delta > 1$. Choosing furthermore any $a \in [k(\eps+\delta), k(\eps+2\delta)]$, as we increase $k$ the cumulative distribution function in~\eqref{eq:binomial} will converge to $1$ by the law of large numbers, and in particular at some point exceed $1 - p_{\min}$. It remains thus to check that, for such a choice of $a$, the condition $\sum_{j=0}^{a} n \binom{k}{j} \leq 2^k$ is satisfied for large enough $k$. To verify this, we use the well-known bound (see, e.g., Section 1.4 in \cite{Lint99}),
\begin{align}\label{eq:binomialineq}
	\sum_{j=0}^{a} \binom{k}{j} \leq \exp{\big(k\, h_1(\eps + 2\delta)\big)} ,
\end{align}
where $h_1:[0,1]\to\R$,
\begin{align}
h_1(x)&=-x\log{x}-(1-x)\log{(1-x)},\\
0\log{0}&:=0,
\end{align}
is the binary entropy and note that, since $h_1(\eps + 2\delta) < 1$ by our choice of $\delta$, we evidently have $n \exp{\big(k\, h_1(\eps + 2\delta)\big)} < \exp{k}$ for sufficiently large $k$, completing the proof.
\end{proof}

\begin{remark}\label{rem:polygrowthmaj}
	Whenever $r\in\mc P_{<\infty}$ is any probability vector with $[r] \neq 1$ (i.e.,\ $\|r\|_0\geq2$), then $[r]$ is already a power universal for $S_1^{\rm op}$. To see this, denote the smallest non-zero entry of $r$ by $r_{\rm min}$ and $t:=1-r_{\rm min}\in(0,1)$. It follows that $\tilde{r}:=(t,1-t)$ majorizes $r$. Using the beginning of the proof of Proposition~\ref{prop:PolynomialGrowth} for $S_1$, where it was shown that $[\tilde{r}]$ is a power universal, we have for all $p,q\in\mc V_{<\infty}$ with $\|p\|_1=\|q\|_1$ and for a sufficiently large $k\in\N$,
\begin{equation}
r^{\otimes k}\otimes q\major_1 \tilde{r}^{\otimes k}\otimes q\major_1 p,
\end{equation}
proving the claim.
\end{remark}

\begin{remark}
We have seen that $S$ has a power universal element, and it is also easy to see that it is zerosumfree. Given that also $0 \rleq 1$ in $S$, the conditions of Theorem \ref{thm:Fritz7.15} are fulfilled for $S$. Concerning $S_1$ and $S_{0,1}$, we now argue that both satisfy the auxiliary conditions of Theorems \ref{thm:Fritz7.1} and \ref{thm:Fritz8.6}. Indeed, denoting the equivalence relation on $\mc V_{<\infty}$ generated by $\rleq_1$ by $\sim_1$, we have $p \sim_1 q$ if and only if $\|p\|_1 = \|q\|_1$. This means that $S_1/\!\sim_1\,\simeq\R_+$, which clearly has quasi-complements and quasi-inverses in the sense of~\cite{Fritz2021b}, and
\begin{equation}
\ms{Frac}(S_1/\!\sim_1)\otimes\Z\cong\R
\end{equation}
is a field and therefore trivially a finite product of fields. 

Similarly, we have then $p\sim_{0,1} q$ if and only if $\|p\|_1=\|q\|_1$ and $\|p\|_0=\|q\|_0$. Therefore
\begin{equation}
S_{0,1}/\!\sim_{0,1}\,\simeq(\R_{>0}\times\N)\cup\{(0,0)\},
\end{equation}
which is easily seen to have quasi-complements and quasi-inverses. Moreover, we have
\begin{equation}
\ms{Frac}(S_{0,1}/\!\sim_{0,1})\otimes\Z\cong\R\times\Q,
\end{equation}
which is a product of two fields.
\end{remark}

In order to apply the separation theorems, we first concentrate on $S_{0,1}$ and characterize all nondegenerate monotone homomorphisms $\Phi:S_{0,1}\to\mb K$ where $\mb K\in\{\R_+,\R_+^{\rm op},\T\R_+,\T\R_+^{\rm op}\}$, as well as the monotone derivations at the degenerate monotone homomorphisms. We will then deduce the corresponding results for $S$ and $S_1$ from those of $S_{0,1}$.

The application of the separation theorems then yields sufficient and generically necessary criteria for large-sample and catalytic majorization between any pair of vectors $p, q \in\mc V_{<\infty}$. Then Theorem~\ref{thm:Fritz7.15} applied to $S$ will let us conclude large-sample and catalytic majorization if the values of all relevant monotones are strictly ordered. Since $\|\cdot\|_1 : S \to \R_+$ is a monotone homomorphism, this in particular requires strict ordering between $\|p\|_1$ and $\|q\|_1$, and therefore we obtain no conclusion if both $p$ and $q$ are probability vectors. However, this is where $S_1$ comes to the rescue: applying Theorem~\ref{thm:Fritz8.6} to $S_1$ with $\|\cdot\| = \|\cdot\|_1$ only requires strict ordering with respect to \emph{other} monotone homomorphisms, and therefore it still gives sufficient conditions for large-sample and catalytic majorization that are necessary in generic cases. But since $\|\cdot\|_0$ is one of the relevant monotone homomorphisms, again this does not apply in the non-generic case $\|p\|_0 = \|q\|_0$. But this is the case in which we can apply Theorem~\ref{thm:Fritz7.1} to $S_{0,1}$ in order to obtain a sufficient and generically necessary criterion for catalytic majorization.

\subsection{Monotone homomorphisms and derivations on the modified majorization semiring}

We first treat the case of the modified majorization semiring. Among our majorization semirings, this one has the largest number of monotone homomorphims since the preorder $\minor_{0,1}$ is the smallest among the majorization preorders. Throughout the following sections, we abuse notations by treating monotone homomorphisms and derivations on $S_{0,1}$ as well as $S_1$ and $S$ as functions on $\mc V_{<\infty}$ although they are actually defined on $\mc V_{<\infty}/\!\approx$; this just means that as functions on $\mc V_{<\infty}$ they are constant on each $\approx$-equivalence class. This abuse of notation is used to avoid extra brackets indicating the equivalence classes and, thus, to lighten our notation.

The monotone homomorphisms on $S_{0,1}$ will be found among the following functions, defined on any $p=(p_1,\ldots,p_n)\in\mc V_{<\infty}$ with support $I:={\rm supp}(p)$ as
\begin{align}
f_\alpha(p)&=\sum_{i\in I}p_i^\alpha,\qquad\alpha\in\R,\label{eq:falpha}\\
f_\infty(p)&=\max_{i\in I} p_i,\label{eq:finfty}\\
f_{-\infty}(p)&=\left(\min_{i\in I} p_i\right)^{-1},\label{eq:neqinfty}\\
H_1(p)&=-\sum_{i\in I}p_i\log{p_i},\label{eq:H1}\\
H'_0(p)&=\sum_{i\in I}\log{p_i}.\label{eq:H_0}
\end{align}
Here, we stipulate that empty sums vanish, $\max\emptyset:=0$, $\min\emptyset:=\infty$, $1/\infty:=0$ and $0 \log 0 := 0$. Note that the map $H_1$ defined in Equation~\eqref{eq:H1} is the Shannon entropy. Also $f_1=\|\cdot\|_1$ and $f_0=\|\cdot\|_0$, where the two `norms' are as before.

\begin{proposition}
\label{prop:monhomMod}
The nondegenerate monotone homomorphisms $S_{0,1} \to \mb K$ are exactly the following:
\begin{enumerate}[label=(\roman*)]
\item For $\mb K = \R_+$, the maps $f_\alpha$ for $\alpha \in (-\infty,0) \cup [1,\infty)$.
\item For $\mb K = \R_+^{\rm op}$, the maps $f_\alpha$ for $\alpha\in (0,1)$.
\item For $\mb K = \T\R_+$, the maps $f_{{\rm sign}(\alpha)\infty}(\cdot)^{|\alpha|}$ for $\alpha \in (-\infty,0) \cup (0,\infty)$.
\item For $\mb K = \T\R_+^{\rm op}$, there are none.
\end{enumerate}
The only degenerate homomorphisms $S_{0,1} \to \R_+$ are $f_0 = \|\cdot\|_0$ and $f_1=\|\cdot\|_1$.
\end{proposition}

\begin{proof}
Given a monotone homomorphism $\Phi : S_{0,1} \to \mb K$, we write $\fii:\R_+\to \mb K$ for the restriction of $\Phi$ to the set of (equivalence classes of) single-entry vectors, $\fii(x) := \Phi([(x)])$. Thus, $\fii$ is multiplicative, i.e.,\
\begin{equation}
	\fii(xy)=\fii(x)\fii(y) \qquad \forall x,y\in\R_+, \qquad \fii(1) = 1,
\end{equation}
by the multiplicativity of $\Phi$. This almost ensures that $\fii$ is a monomial function $\fii(x) = x^\alpha$: by the standard theory of the Cauchy functional equation, to reach this conclusion it is enough to show in addition that $\fii$ is bounded in some interval $[x,y]$ for arbitrary $0\leq x < y$ \cite[Chapter 2, Theorem 8]{Aczel89}\footnote{Note that this theorem treats the case of the linear form of the Cauchy functional equation. We may transform our problem to this form by considering the function $\log\circ\fii\circ\exp$ instead.}, which we now go on to show. To this end, note that for all $t\in[0,1]$,
\begin{equation}
\left(\frac{x+y}{2},\frac{x+y}{2}\right)\major_{0,1}\big(tx+(1-t)y,(1-t)x+ty\big)\major_{0,1}(x,y),
\end{equation}
as witnessed by the bistochastic matrices
\begin{equation}
T_1=\left(\begin{array}{cc}
\frac{1}{2} & \frac{1}{2} \\[2pt]
\frac{1}{2} & \frac{1}{2}
\end{array}\right),\qquad
T_2=\left(\begin{array}{cc}
t&1-t\\
1-t&t
\end{array}\right).
\end{equation}
Thus, by the additivity and monotonicity of $\Phi$ we have
\begin{align}
\textrm{case (i): }2\fii\left(\frac{x+y}{2}\right)\leq{}&\fii\big(tx+(1-t)y\big)+\fii\big((1-t)x+ty\big)\\
\leq{}&\fii(x)+\fii(y),\\
\textrm{case (ii): }2\fii\left(\frac{x+y}{2}\right)\geq{}&\fii\big(tx+(1-t)y\big)+\fii\big((1-t)x+ty\big)\\
\geq{}&\fii(x)+\fii(y),\\
\textrm{case (iii): }\fii\left(\frac{x+y}{2}\right)\leq{}&\max_{s\in\{t,1-t\}}\fii\big(sx+(1-s)y\big)\\
\leq{}&\max\{\fii(x),\fii(y)\},\\
\textrm{case (iv): }\fii\left(\frac{x+y}{2}\right)\geq{}&\max_{s\in\{t,1-t\}}\fii\big(sx+(1-s)y\big)\\
\geq{}&\max\{\fii(x),\fii(y)\},
\end{align}
where the inequalities are with respect to the usual order of numbers. It is now enough to note that in every case, these inequalities imply a $t$-independent upper bound on $\fii\big(tx + (1-t)y\big)$ based on the assumption that $\fii$ only takes nonnegative values. Hence $\fii$ is bounded on $[x,y]$, and therefore there is $\alpha\in\R$ such that $\fii(x)=x^\alpha$ for all $x\geq0$.

Turning to the main claims, consider any $p \in\mc V_{<\infty}$ and denote $I:={\rm supp}\,p$. We now have
\begin{equation}\label{eq:(i)et(ii)}
\Phi(p)=\Phi\left(\bigoplus_{i\in I}(p_i)\right)=\sum_{i\in I}\fii(p_i)=\sum_{i\in I}p_i^\alpha = f_\alpha(p)
\end{equation}
in cases (i) and (ii) and
\begin{equation}
\Phi(p)=\Phi\left(\bigoplus_{i\in I}(p_i)\right)=\max_{i\in I}\fii(p_i)=\max_{i\in I}p_i^\alpha=\begin{cases}
f_\infty(p)^\alpha & \textrm{if } \alpha \ge 0,	\\
f_{-\infty}(p)^{-\alpha} & \textrm{if } \alpha \le 0
	\end{cases}\label{eq:(iii)et(iv)}
\end{equation}
in cases (iii) and (iv). Therefore $\Phi$ is of the desired form in all cases, where the range of values of $\alpha$ that make these maps into nondegenerate monotone homomorphisms remains to be determined.

It is easy to see that the maps $f_\alpha$ are indeed homomorphisms $S_{0,1} \to \R_+$ for all $\alpha \in \R$, and similarly that the maps $f_\infty(\cdot)^\alpha$ and $f_{-\infty}(\cdot)^{-\alpha}$ are homomorphisms $S_{0,1} \to \T\R_+$ for all $\alpha \in \R$. To finish the proof of the main claims, it is therefore enough to show that these maps are nondegenerate and monotone if and only if $\alpha$ satisfies the stated conditions. Let us concentrate on cases (i) and (ii) for now, and consider $\alpha\in[0,1]$ first. To analyze the monotonicity of $f_\alpha$, let $p,q\in\mc V_{<\infty}$ be such that $p\minor_{0,1} q$. Thus, we may assume that there is $n\in\N$ such that $p=(p_1,\ldots,p_n)\in\R_{>0}^n$, $q=(q_1,\ldots,q_n)\in\R_{>0}^n$ and that there is bistochastic $T=(T_{i,j})_{i,j=1}^n$ such that $q=Tp$. Using the fact that the function $x\mapsto x^\alpha$ is concave for $\alpha \in [0,1]$, we may evaluate
\begin{equation}
\label{falpha_mon}
f_\alpha(q)=\sum_{i=1}^n\left(\sum_{j=1}^n T_{i,j}p_j\right)^\alpha\geq\sum_{i,j=1}^n T_{i,j}p_j^\alpha=\sum_{j=1}^n p_j^\alpha=f_\alpha(p).
\end{equation}
Therefore $f_\alpha : S_{0,1} \to \R_+^{\rm op}$ is a monotone homomorphism for $\alpha \in [0,1]$. At $\alpha = 0$ and $\alpha = 1$, we recover the homomorphisms $\|\cdot\|_0$ and $\|\cdot\|_1$, which are degenerate by definition of $\succeq_{0,1}$. For $\alpha \in (0,1)$, we can show the nondegeneracy for example by considering $p_t=(1-t,t)$ for $t\in(0,1/2]$, so that, whenever $0<s\leq t$, we have $p_s\minor_{0,1} p_t$. We then have $f_\alpha(p_t)=(1-t)^\alpha+t^\alpha$, which is easily seen to be a strictly increasing function of $t$ for $\alpha \in (0,1)$.

Similarly, using the convexity of $x\mapsto x^\alpha$ when $\alpha\leq0$ or $\alpha\ge 1$, we see that $f_\alpha:S_1\to\R_+$ is a monotone homomorphism in these cases.\footnote{Note that, when $\alpha<0$, the assumption on coinciding support sizes (i.e.,\ the possibility of assuming that all the entries of $p$ and $q$ as above are non-zero) is crucial because, otherwise, we end up with ill-defined expressions of the form $0^\alpha$.} The nondegeneracy proof for $\alpha \neq 0,1$ works similarly. Thus we obtain the claims (i) and (ii).

Let us finally look at cases (iii) and (iv). The map $f_\infty(\cdot)^\alpha:S_{0,1}\to\T\R_+$ is a monotone homomorphism for every $\alpha \ge 0$. To see this, take any $p,q\in\mc V_{<\infty}$ as in~\eqref{falpha_mon}, and similarly evaluate
\begin{equation}
	\label{finfty_mon}	
	f_\infty(q)^\alpha = \left( \max_{1\leq i\leq n} \sum_{j=1}^n T_{i,j} p_j \right)^\alpha \leq \left( \max_{1\leq i\leq n} p_i \right)^\alpha = f_\infty(p)^\alpha,
\end{equation}
where we have used the fact that a convex combination can never increase the maximum of the elements which one is convexly combining. We may do exactly the same for $f_{-\infty}(\cdot)^\alpha$ with $\alpha\geq0$. Since for $\alpha > 0$ these inequalities are strict in generic cases like the $p_t$ above, $f_{\pm\infty}(\cdot)^\alpha$ is not monotone as a map to $\T\R_+^{\rm op}$ unless $\alpha = 0$. In particular, we also obtain nondegeneracy as a map $S_{0,1} \to \T\R_+$ for all $\alpha \neq 0$.
\end{proof}

Since we have two degenerate homomorphisms of $S_{0,1}$ into $\R_+$, we have to consider monotone derivations at both $\|\cdot\|_1$ and $\|\cdot\|_0$. While the classification of monotone derivations at $\|\cdot\|_1$ can be done using existing results on axiomatics for Shannon entropy (see Remark~\ref{rem:axiomatic}), we provide a full proof as it illustrates the utility of semiring-theoretic methods again. As we shall see later, the case of derivations at $\|\cdot\|_0$ is much simpler than that at $\|\cdot\|_1$.

Before diving into the details of the classification of these derivations, let us present and prove a useful lemma which we will need when characterizing the monotone derivations at $\|\cdot\|_1$.

\begin{lemma}\label{lemma:UpperLowerBound}
Fix $n\in\N$ and let $\Delta:\R_+^n\to\R$ be such that
\begin{equation}
	p\minor_{0,1}q \quad \Longrightarrow \quad \Delta(p)\geq\Delta(q)
\end{equation}
for all $p,q \in \R_+^n$.
	Suppose that there is a continuous map $\Lambda:\R_+^n\to\R$
	such that $\Delta(p)=\Lambda(p)$ for all $p\in \Q_+^n$. Then $\Delta(p) = \Lambda(p)$ for all $p \in \R_+^n$ with $\|p\|_1\in\Q$.
\end{lemma}

\begin{proof}
We prove the claim by proving the following: For every $p\in \R_+^n$ with $\|p\|_1\in\Q$ and $\eps>0$, there are $q,r\in \Q_+^n$ such that $q\minor_{0,1} p\minor_{0,1} r$ and $\|q-r\|_1 \le \eps$ as well as $\|q-p\|_1 \le \eps$. This is enough since applying $\Delta$ and $\Lambda$ to this inequality gives
\begin{equation}
	\Lambda(q)=\Delta(q)\geq \Delta(p)\geq \Delta(r)=\Lambda(r),
\end{equation}
where we have used the $\minor_{0,1}$-monotonicity of $\Delta$.
Since the set of nonnegative vectors of fixed normalization is compact, we know that $\Lambda$ is uniformly continuous on it. Therefore taking $\eps \to 0$ then shows that $\Delta(p)=\Lambda(p)$. Thus it suffices to prove that there are $q$ and $r$ as above.

To this end, we may freely assume that our given $p = (p_1,\ldots,p_m,0,\ldots,0) \in \R_+^n$ is already in non-increasing order with $p_m>0$, since the monotonicity of $\Delta$ implies that $\Delta$ is invariant under permutations. By restricting to $\R_+^m$ we may furthermore assume that $m = n$, so that $p$ has full support. By rescaling everything by the rational $\|p\|_1^{-1}$, we may also assume that $\|p\|_1=1$. For given $\eps>0$ we put $\delta:=\frac{\eps}{4(n-1)}$. For every $i=1,\ldots,n-1$, we fix a rational number $q_i \in [p_i, p_i + \delta]$ such that $q_1\geq\cdots\geq q_{n-1}$. We then define the rational number $q_n:=1-\sum_{i=1}^{n-1}q_i$, which by $\|p\|_1 = 1$ satisfies
\begin{equation}
p_n-(n-1)\delta \le q_n \le p_n.
\end{equation}
For sufficiently small $\eps$ this in particular guarantees that $q_n > 0$, and therefore $q = (q_1, \ldots, q_n)$ is a probability vector with $\|q\|_0=\|p\|_0$. The entries of $q$ are already in non-increasing order as well, and we have $\sum_{i=1}^k q_i \ge \sum_{i=1}^k p_i$ for all $k=1,\ldots,n-1$ by construction. Thus $p\major_{0,1} q$. Moreover,
\begin{equation}
\|q-p\|_1=\sum_{i=1}^{n-1}(q_i-p_i)+p_n-q_n \le 2(n-1)\delta=\frac{\eps}{2}.
\end{equation}
To construct the desired vector $r$, let us also assume that $p$ is not uniform, since we can just take $r = p$ in the uniform case. Let then $\ell$ be the unique index with $p_\ell > p_{\ell+1} = p_n$ (i.e.,\ the final $n-\ell$ entries in $p$ coincide with the lowest probability $p_n$). Let us define
\begin{equation}
\delta':=\min\left\{\frac{\eps}{4\ell},\frac{n-\ell}{n}(p_\ell-p_{\ell+1})\right\}.
\end{equation}
We then fix, for all $i=1,\ldots,\ell$, positive rational numbers $r_i$ such that $\frac{n-\ell-1}{n-\ell}\delta' \le p_i-r_i \le \delta'$ and $r_1\geq\cdots\geq r_\ell$. We also let
\begin{equation}
r_{\ell+1} = \cdots=r_n=\frac{1}{n-\ell}\left(1-\sum_{i=1}^\ell r_i\right) \le p_n+\frac{\ell}{n-\ell}\delta'.
\end{equation}
The definition of $\delta'$ guarantees that
\begin{equation}
r_\ell \ge p_\ell - \delta' \ge p_{\ell+1} + \frac{\ell}{n-\ell}\delta' \ge r_{\ell+1} = \cdots = r_n,
\end{equation}
so that the whole vector $r:=(r_1,\ldots,r_n)$ is in non-increasing order as well and all its entries are non-zero. We immediately see that $\sum_{i=1}^k p_i \ge \sum_{i=1}^k r_i$ for all $k=1,\ldots,\ell$. If $\ell = n - 1$, this already means that $p\minor_{0,1}r$. We now show the same in general by proving the remaining inequalities. For $j=1,\ldots,n-\ell-1$,
\begin{align}
\sum_{i=1}^{\ell+j}r_i &= \sum_{i=1}^\ell r_i+\sum_{i=\ell+1}^{\ell+j}r_i\\
	& \le \sum_{i=1}^\ell p_i - \ell\frac{n-\ell-1}{n-\ell}\delta'+\sum_{i=\ell+1}^{\ell+j}p_i+j\frac{\ell}{n-\ell}\delta'\\
	& = \sum_{i=1}^{\ell+j}p_i - \ell\frac{n-\ell-1-j}{n-\ell}\delta'\le \sum_{i=1}^{\ell+j}p_i,
\end{align}
where we use $j \le n-\ell-1$ in the final inequality, which is the last case that needs to be considered before the normalization equation $\sum_{i=1}^n r_i = \sum_{i=1}^n p_i$. Thus, $p\minor_{0,1} r$. Moreover,
\begin{equation}
\|p-r\|_1=\sum_{i=1}^\ell(p_i-r_i)+\sum_{i=\ell+1}^n(r_i-p_i)\leq 2\ell\delta'\leq\frac{\eps}{2},
\end{equation}
where the final inequality follows from the definition of $\delta'$.

To sum up, we have $q\minor_{0,1} p\minor_{0,1} r$ and $\|q-r\|_1\leq\|q-p\|_1+\|p-r\|_1<\frac{\eps}{2}+\frac{\eps}{2}=\eps$, and hence the auxiliary result is proven.
\end{proof}

We now go on to study the two types of derivations. The Leibniz rules for derivations $\Delta_0, \Delta_1:S_{0,1}\to\R$ at $\|\cdot\|_1$ respectively $\|\cdot\|_0$ now read
\begin{align}
\Delta_1(p\otimes q)=&\Delta_1(p)\|q\|_1+\|p\|_1\Delta_1(q),\label{eq:Leibniz1}\\
\Delta_0(p\otimes q)=&\Delta_0(p)\|q\|_0+\|p\|_0\Delta_0(q)\label{eq:Leibniz0}
\end{align}
for all $p,q\in\mc V_{<\infty}$. One may check quite easily that the Shannon entropy $H_1$ satisfies \eqref{eq:Leibniz1} and the function $H'_0$ from~\eqref{eq:H_0} satisfies \eqref{eq:Leibniz0}. We next show that, up to scaling, these are the only monotone derivations at least on those vectors that have rational $1$-norm. However, the scaling is negative to account for the ordering of $S_{0,1}$.

\begin{proposition}\label{prop:ModDeriv}
On vectors of rational $1$-norm, the monotone derivations $\Delta:S_{0,1}\to\R$ are exactly the following:
\begin{enumerate}[label=(\roman*)]
	\item At $\|\cdot\|_1$, the negative multiples of $H_1$.
	\item At $\|\cdot\|_0$, the negative multiples of $H'_0$.
\end{enumerate}
\end{proposition}

\begin{proof}
	For (i), it is well-known that $H_1$ is monotone under majorization in the sense that if $p\minor_{0,1}q$ (or even just $p\minor_1 q$) then $H_1(p)\leq H_1(q)$. So suppose that $\Delta_1:S_{0,1}\to\R$ is any monotone derivation at $\|\cdot\|_1$. Let $\delta_1$ be the single-entry restriction of $\Delta_1$, i.e.,\ $\delta_1(x) := \Delta_1\big([(x)]\big)$ for all $x\in\R_+$.
	The Leibniz rule for $\delta_1$ takes the form
	\begin{equation}
		\delta_1(xy) = \delta_1(x)y + x \delta_1(y) \qquad \forall x,y \ge 0.
	\end{equation}
	It is easy to see that for all $x,y > 0$ and $m,n \in \N$,
\begin{equation}
\Big(\underbrace{\frac{m}{m+n}x+\frac{n}{m+n}y,\ldots,\frac{m}{m+n}x+\frac{n}{m+n}y}_{m+n\ {\rm copies}}\Big)\preceq_{0,1} (\underbrace{x,\ldots,x}_{m\ {\rm copies}},\underbrace{y,\ldots,y}_{n\ {\rm copies}}).
\end{equation}
Applying $\Delta_1$ to this inequality and using monotonicity and additivity, we find that
\begin{equation}
	\label{eq:frac_concavity}
	\delta_1\big(tx+(1-t)y\big)\leq t\delta_1(x)+(1-t)\delta_1(y)
\end{equation}
for all $t\in[0,1]\cap\Q$ and $x,y>0$. We next show that $\delta_1(\frac{1}{2})\leq0$. Since
\begin{equation}
0=\delta_1(1)=\delta_1\left(2\cdot\frac{1}{2}\right)=2\delta_1\left(\frac{1}{2}\right)+\frac{1}{2}\delta_1(2),
\end{equation}
we have $\delta_1(2)=-4\delta_1(\frac{1}{2})$. Using this and~\eqref{eq:frac_concavity} gives
\begin{align}
0&=\delta_1(1)=\delta_1\left(\frac{2}{3}\cdot\frac{1}{2}+\frac{1}{3}\cdot 2\right)\leq\frac{2}{3}\delta_1\left(\frac{1}{2}\right)+\frac{1}{3}\delta_1(2)\\
&=\left(\frac{2}{3}-\frac{4}{3}\right)\delta_1\left(\frac{1}{2}\right)=-\frac{2}{3}\delta_1\left(\frac{1}{2}\right).
\end{align}
Thus, $\delta_1(\frac{1}{2})\leq0$. But then by $\delta_1(1) = 0$ and~\eqref{eq:frac_concavity} again, we have $\delta_1(x) \leq 0$ for all rational $x \in [\frac{1}{2},1]$, and the Leibniz rule extends this inequality to all rational $x \in [0,1]$.

	Let us now consider $\eta(x) := \frac{\delta_1(x)}{x}$ for $x > 0$. The above Leibniz rule for $\delta_1$ shows that this function satisfies the multiplicative-to-additive version of the Cauchy functional equation,
	\begin{equation}
		\eta(xy) = \eta(x) + \eta(y)	\qquad \forall x,y > 0.
	\end{equation}
	We will now argue that $\eta(x) = c \log x$ for all $x \in \Q_{>0}$ and some fixed $c \ge 0$.
	Since $\eta(x) \leq 0$ for rational $x \in (0,1]$, the functional equation implies that $\eta$ is monotonically non-decreasing on rational arguments.
	We can now proceed as in the standard solution to the Cauchy functional equation~\cite[p.~14]{Aczel89}: suppose that there are $x,y \in \Q_{>0}$ such that the points
	\begin{equation}
		\label{cauchy_proof}
		(\log x, \eta(x)), \qquad (\log y, \eta(y))
	\end{equation}
	are linearly independent in $\R^2$.
	Then they span all of $\R^2$, and in particular we can find coefficients $\ell_x, \ell_y \in \Z$ such that 
	\begin{equation}
		\ell_x \log x + \ell_y \log y \ge 0, \qquad \ell_x \eta(x) + \ell_y \eta(y) < 0.
	\end{equation}
	This means that the positive rational $z := x^{\ell_x} y^{\ell_y}$ satisfies $z \ge 1$ and $\eta(z) < 0$, which contradicts $\eta(1) = 0$ and the monotonicity of $\eta$.
	Therefore the points~\eqref{cauchy_proof} are linearly dependent, or equivalently
	\begin{equation}
		\frac{\eta(x)}{\log x} = \frac{\eta(y)}{\log y} \qquad \forall x, y > 0.
	\end{equation}
	Fixing e.g.~$y := 2$ shows that we have $\eta(x) = c \log x$ with $c = \frac{\eta(2)}{\log 2}\geq0$.
	But then it follows that
	\begin{equation}
		\delta_1(x) = c x \log x \qquad \forall x \in \Q_+.
	\end{equation}

	By additivity of $\Delta_1$, we therefore have for all $p \in\mc V_{<\infty}$ with rational entries,
	\begin{equation}
		\Delta_1(p) = \sum_i \delta_1(p_i) =  c \sum_i p_i \log p_i,
	\end{equation}
	or equivalently $\Delta_1(p)=-cH_1(p)$ for all $p\in\Q_+^n$, $n\in\N$. Noting that $H_1$ is continuous and $\Delta_1$ is monotone, Lemma \ref{lemma:UpperLowerBound} implies that $\Delta_1$ coincides with $-cH_1$ on all $p\in\mc V_{<\infty}$ with $\|p\|_1\in\Q$.

For (ii), we first show that $H'_0:S_{0,1}^{\rm op}\to\R$ is indeed monotone (i.e.,\ negative multiples of $H'_0$ are monotone). To this end, let $p,q\in\mc V_{<\infty}$ be such that $p\minor_{0,1} q$. Thus there are $n\in\N$ such that $p=(p_1,\ldots,p_n)\in\R_{>0}^n$ and $q=(q_1,\ldots,q_n)\in\R_{>0}^n$ and a bistochastic matrix $T=(T_{i,j})_{i,j=1}^n$ such that $Tp = q$, and we freely assume that $p$ and $q$ have full support. We may evaluate
\begin{align}
	H'_0(p)= {}&\sum_{j=1}^n \log{p_j}=\sum_{i,j=1}^n T_{i,j}\log{p_j}\\
	\leq {}&\sum_{i=1}^n \log{\sum_{j=1}^n T_{i,j}p_j}=\sum_{i=1}^n \log{q_i}=H'_0(q),
\end{align}
where, in the second equality, we have used the fact that the columns of $T$ sum to 1 and, in the inequality we have used the fact that the rows of $T$ sum to 1 together with the concavity of the logarithm. Thus $H'_0$ is indeed monotone.

Let now $\Delta_0:S_{0,1}\to\R$ be any monotone derivation at $\|\cdot\|_0$, and let $\delta_0$ be the single-entry restriction of $\Delta_0$. In particular, it follows that $\delta_0(0)=0$ and $\delta_0(xy)=\delta_0(x)+\delta_0(y)$ for all $x,y>0$. Let us define the function $k:(0,1)\to\R$ by
\begin{equation}
	k(x)=\Delta_0(x,1-x)=\delta_0(x)+\delta_0(1-x)=\delta_0(x-x^2).
\end{equation}
For every $t\in[0,1]$, let us consider $p_t=(1-t,t)\in\mc V_{<\infty}$. Using the monotonicity of $\Delta_0$ and the fact that $p_s\minor_{0,1} p_t$ for all $s,t\in(0,\frac{1}{2}]$ with $s\leq t$, we see that $k$ is non-increasing on $(0,\frac{1}{2}]$. Thus, $\delta_0$ is non-increasing on $(0,\frac{1}{4}]$. But then $x\mapsto\delta_0(2^x)$ is an additive function $\R \to \R$ which is monotone on a nontrivial interval, and therefore of the form $x\mapsto -c x$ for some constant $c\in\R$. From this we see that $\delta_0(x)=-c\log{x}$ for $x>0$. Hence $\Delta_0=-c H'_0$, and the monotonicity of $\Delta_0$ implies $c \ge 0$.
\end{proof}

\begin{remark}\label{rem:axiomatic}
Let us outline how to obtain the above classification of monotone derivations at $\|\cdot\|_1$ using a known characterization of Shannon entropy. Suppose that $\Delta_1$ is a monotone derivation at $\|\cdot\|_1$ with $\Delta_1(\frac{1}{2},\frac{1}{2})=-1$. This derivation has the following three properties:
\begin{itemize}
\item When viewed as a function on $\mc V_{<\infty}$, as we usually do, $\Delta_1$ is permutation invariant, i.e.,\ the order in which the entries of the argument vector appear does not matter. This is simply because, within our framework, $\Delta_1$ is formally a function on the set $\mc V_{<\infty}/\!\approx$ of equivalence classes of differently ordered nonnegative vectors.
\item The function $h:[0,1]\to\R$ with $h(x)=-\Delta_1(x,1-x)$ is measurable. This follows from the fact that, as a real function, $h$ is non-decreasing on the interval $(0,\frac{1}{2}]$ and non-increasing on $[\frac{1}{2},1)$. This, in turn, follows from the monotonicity of $\Delta_1$ under the preorder $\minor_{0,1}$ and the fact that
\begin{equation}
(1-x,x)\minor_{0,1} (1-y,y)
\end{equation}
whenever $0< x\leq y \leq \frac{1}{2}$, and the order reverses when $\frac{1}{2}\leq x\leq y<1$. Moreover, $h(\frac{1}{2})=1$ by the assumed normalization.
\item For every $p=(p_1,\ldots,p_n)\in\mc V_{<\infty}$ and $t\in[0,1]$, we have
\begin{equation}
\Delta_1\big(tp_1,(1-t)p_1,p_2,\ldots,p_n\big)=p_1 h(t)+\Delta(p).
\end{equation}
This follows from the Leibniz rule of $\Delta_1$ together with additivity.
\end{itemize}
According to \cite{Lee64}, this means that $-\Delta_1$ coincides on $\mc P_{<\infty}$ with the Shannon entropy $H_1$. This is all that we need for our applications. However, using the same technique as in the end of the proof of Proposition~\ref{prop:ModDeriv}(i), we may prove that $-\Delta_1$ coincides with $H_1$ on all $p \in\mc V_{\infty}$ with $\|p\|_1\in\Q$.
\end{remark}

\begin{remark}
	\label{mod_derivation_normalization}
	Theorem~\ref{thm:Fritz7.1} is formulated with the monotone derivations $\Delta$ normalized such that $\Delta(u_+)=\Delta(u_-)+1$, where $(u_-,u_+)$ is a chosen power universal pair. But of course this normalization is arbitrary, and in applying Theorem~\ref{thm:Fritz7.1} to probability vectors in $S_{0,1}$, we may as well normalize them such that the relevant monotone derivations are exactly $-H_1$ and $-H'_0$.
\end{remark}

\subsection{Monotone homomorphisms and derivations on the majorization semiring}\label{subsec:majorizationHom}

Moving on to the majorization semiring $S_1$, we will now derive a simple characterization of the monotone homomorphisms $\Phi:S_1\to\mb K$, where $\mb K\in\{\R_+,\R_+^{\rm op},\T\R_+,\T\R_+^{\rm op}\}$, in terms of the maps \eqref{eq:falpha} and \eqref{eq:finfty}.

\begin{proposition}\label{prop:monhomMaj}
The nondegenerate monotone homomorphisms $S_1 \to \mb K$ are exactly the following:
\begin{enumerate}[label=(\roman*)]
	\item For $\mb K = \R_+$, the maps $f_\alpha$ for $\alpha\in(1,\infty)$.
	\item For $\mb K = \R_+^{\rm op}$, the maps $f_\alpha$ for $\alpha \in [0,1)$.
	\item For $\mb K = \T\R_+$, the maps $f_\infty(\cdot)^\alpha$ for $\alpha \in (0,\infty)$.
	\item For $\mb K = \T\R_+^{\rm op}$, there is none.
\end{enumerate}
The only degenerate homomorphism $S_1 \to \R_+$ is $f_1=\|\cdot\|_1$.
\end{proposition}

\begin{proof}
	Since the majorization preorder $\rleq_1$ is an extension of the modified order $\rleq_{0,1}$ which we have already treated in Proposition~\ref{prop:monhomMod}, we already know that the monotone homomorphisms in each case are among those considered there. Also nondegeneracy is trivially preserved, and the degenerate homomorphism $f_0 = \|\cdot\|_0$ is now nondegenerate monotone with values in $\R_+$.

	So to prove the claim in case (ii), it is enough to prove that the $f_\alpha$ for $\alpha \in [0,1)$ are still monotone. But this follows by the same inequality~\eqref{falpha_mon} as before, which still applies since the assumption that all probabilities are nonzero was not used there.

	The same applies in case (i), where in addition we need to show that $f_\alpha$ with $\alpha \in (-\infty,0)$ is now no longer monotone.
	To this end, consider the following two-outcome vectors and their images under $f_{\alpha}$,
	\begin{equation}
		\label{eq:non_monotone}
		\begin{tikzcd}[column sep=tiny]
			[(\frac{1}{2},\frac{1}{2})] \ar[mapsto]{d}{f_{\alpha}} 	& \preceq_1	& {[(1-\eps,\eps)]}	\ar[mapsto]{d}{f_{\alpha}} 	& \preceq_1	& 1 \ar[mapsto]{d}{f_{\alpha}} \\
			2^{1+|\alpha|}				& <	& \left(1-\eps\right)^{-|\alpha|} + \eps^{-|\alpha|}		& >	& 1
		\end{tikzcd}
	\end{equation}
	where the inequalities in the second row hold for all sufficiently small $\eps>0$.
	Thus, the case (i) corresponds exactly to the functions $f_\alpha$ with $\alpha \in (1,\infty)$.

	The claim in case (iii) immediately follows from Proposition~\ref{prop:monhomMod}(iii).
	In case (iv), the map $f_\infty$ and its positive powers are still monotone by~\eqref{finfty_mon}, since there we did not use the assumption that all probabilities are nonzero.
	The map $f_{-\infty}$ and its positive powers are no longer monotone, as one can see by applying it to the same vectors as in~\eqref{eq:non_monotone}.
\end{proof}

Since the map $f_0=\|\cdot\|_0$ is no longer degenerate on $S_1$, we only need to consider monotone derivations at $\|\cdot\|_1$. Because the Shannon entropy $H_1$ is still monotone, Proposition \ref{prop:ModDeriv} immediately implies the following.

\begin{proposition}\label{prop:MajDeriv}
On vectors of rational $1$-norm, the monotone derivations $\Delta:S_1\to\R$ at $\|\cdot\|_1$ are exactly the negative multiples of $H_1$.
\end{proposition}

In the application of Theorem~\ref{thm:Fritz8.6} to normalized probability vectors in $S_1$, we can thus restrict to Shannon entropy $H_1$ itself as the only relevant derivation.

\subsection{Monotone homomorphisms on the submajorization semiring}

We will now derive a simple characterization of the monotone homomorphisms $\fii:S\to\mb K$ for $\mb K\in\{\R_+,\T\R_+\}$. We formulate the classification still in terms of the functions $f_\alpha$ from~\eqref{eq:falpha} and \eqref{eq:finfty}. It turns out that $\alpha\in[-\infty,1)$ is no longer relevant now.

\begin{proposition}\label{prop:monhomSub}
The monotone homomorphisms $S \to \mb K$ are exactly the following:
\begin{enumerate}[label=(\roman*)]
	\item For $\mb K = \R_+$, the maps $f_\alpha$ for $\alpha \in (1,\infty)$.
	\item For $\mb K = \T\R_+$, the maps $f_\infty(\cdot)^{\alpha}$ for $\alpha \in (0,\infty)$.
\end{enumerate}
\end{proposition}

\begin{proof}
	Since the submajorization preorder $\rleq$ is an extension of the majorization preorder $\rleq_1$, the monotone homomorphisms in each case are a subset of those of Proposition~\ref{prop:monhomMaj}, and it so happens that they are all still monotone. In case (i), the monotonicity proof for $\alpha \in [1,\infty)$ works essentially as before, with the minor difference that the first step consists of using $q \le Tp$ rather than substituting $q = Tp$. Similarly in case (ii).
\end{proof}

\section{Applications to majorization problems of probability vectors}\label{sec:classicalResults}

In this section, we apply the separation theorems of Theorems~\ref{thm:Fritz7.15},~\ref{thm:Fritz7.1} and~\ref{thm:Fritz8.6} to our majorization semirings in order to rederive results on large-sample and catalytic majorization. While these have been known before, our main original point here is that these results all follow from the same overarching framework. Before proceeding to these questions, let us make a useful observation.

\begin{remark}\label{rem:asymptocatalytic}
Suppose that $p$ and $q$ are finite probability vectors such that $p$ majorizes $q$ in large samples, i.e.,\ there is $n\in\N$ such that $p^{\otimes n} \minor_1 q^{\otimes n}$. Then $p$ also catalytically majorizes $q$, i.e.,\ there is a finite probability vector $r$ such that $p\otimes r \minor_1 q\otimes r$. To see this, we apply a well-known trick \cite{DuFeLiYi2005}:\footnote{See also Lemma 5.4 in~\cite{Fritz2015} for the general formulation of this statement.} Define
\begin{equation}\label{eq:FormOfCatalyst}
r:=\bigoplus_{j=1}^n \left( p^{\otimes(n-j)} \otimes q^{\otimes(j-1)} \right),\qquad s:=\bigoplus_{j=1}^{n-1} \left( p^{\otimes(n-j)} \otimes q^{\otimes j} \right).
\end{equation}
Then it is easy to see that $p\otimes r = p^{\otimes n} \oplus s$ and $r \otimes q = s \oplus q^{\otimes n}$, so that
\begin{equation}
p\otimes r=p^{\otimes n}\oplus s\minor_1 q^{\otimes n}\oplus s=q\otimes r.
\end{equation}
This proves the claims since one can normalize $r$ without affecting the validity of the inequality.
\end{remark}

\subsection{Asymptotic large-sample and catalytic majorization}\label{sec:AuNe}

For a given finite probability vector $p\in\mc P_{<\infty}$, define the sets of probability vectors majorized by $p$ catalytically and in large samples as follows:
\begin{align}
T_{<\infty}(p)&:=\{q\in\mc P_{<\infty}\,|\,q\otimes r\preceq_1 p\otimes r\ {\rm for\ some}\ r\in\mc P_{<\infty}\},\\
M_{<\infty}(p)&:=\{q\in\mc P_{<\infty}\,|\,q^{\otimes n}\preceq_1 p^{\otimes n}\ {\rm for\ some}\ n\in\N_{> 0} \}.
\end{align}
Subsequently, we will denote the closures of these sets in $\mc P_{<\infty}$ with respect to the total variation distance by $\overline{T_{<\infty}(p)}$ and $\overline{M_{<\infty}(p)}$.

We present a new proof for a result due to Aubrun and Nechita \cite{AubrunNechita2007} which characterizes the conditions for asymptotic catalytic or large-sample majorization in terms of the R\'{e}nyi entropies $H_\alpha$, $\alpha\geq1$, defined for $p=(p_1,\ldots,p_n)\in\mc P_{<\infty}$ through
\begin{align}\label{eq:Renyientr}
H_\alpha(p)&=\log{\|p\|_0}-D_{\alpha}(p\|u_{{\rm supp}\,p})\\
&=\left\{\begin{array}{@{}cl}
\frac{1}{1-\alpha}\log{\sum_{i=1}^n p_i^\alpha} &{\rm if}\ \alpha\in(0,1)\cup(1,\infty),\\[2mm]
\log{\|p\|_0} &{\rm if}\ \alpha=0 ,\\[2mm]
-\sum_{i \in {\rm supp}\,p} p_i \log{p_i} &{\rm if}\ \alpha=1,\\[2mm]
-\log{\max_{1\leq i\leq n}p_i}  &{\rm if}\ \alpha = \infty.
\end{array}\right.
\end{align}
Note, however, that we only need $H_\alpha$ with $\alpha\geq1$ below.

\begin{theorem}[Theorem 1 in \cite{AubrunNechita2007}]\label{thm:aubrunNechitaTheorem}
For every pair of finite probability vectors $p, q \in\mc P_{<\infty}$, the following are all equivalent:
\begin{enumerate}[label=(\roman*)]
	\item $q\in\overline{M_{<\infty}(p)}$.
	\item $q\in\overline{T_{<\infty}(p)}$.
	\item $H_\alpha(p) \leq H_\alpha(q) \; \text{for all} \,\ \alpha \geq 1$.
\end{enumerate}
\end{theorem}

Let us note that item (iii) is equivalent to $f_\alpha(p)\geq f_\alpha(q)$ for all $\alpha\geq1$, which in \cite{AubrunNechita2007} was formulated as $\|p\|_\alpha\geq\|q\|_\alpha$ with $\|\cdot\|_p$ being the usual $\ell^p$-norm for finite probability vectors. Our new proof for this theorem uses the same techniques as developed in \cite{AubrunNechita2007}, with the crucial difference being that the authors in \cite{AubrunNechita2007} use Cram\'{e}r's large deviation theorem for the key step in the proof of implication (iii)$\implies$(i), while for us this step directly follows from Theorem~\ref{thm:Fritz7.15} applied to $S$ together with our preceding identification of the monotone homomorphisms $S \to \R_+$ and $S \to \T\R_+$ from Proposition~\ref{prop:monhomSub}. Since our new techniques add nothing new to the proofs of the implications (i)$\implies$(ii), which essentially uses Remark~\ref{rem:asymptocatalytic}, and (ii)$\implies$(iii), we concentrate on (iii)$\implies$(i) only.

\begin{proof}
{\bf (iii)$\implies$(i):} Assuming item (iii), we know that $f_\alpha(p) \ge f_\alpha(q)$ for all $\alpha \in [1,\infty]$. Since Theorem~\ref{thm:Fritz7.15} requires strict inequalities in order to conclude ordering in large samples, the idea is to lower bound $q$ by some $q'_\eps$ within a given error range $\eps$ in order to create strict inequalities $f_\alpha(p) > f_\alpha(q'_\eps)$. After this, the lower bound $q'_\eps$ will be completed to a normalized vector while ensuring that it is still majorized by $p$ in large samples.

So let $\eps\in(0,q_{\rm min}\|q\|_0)$ be given, where $q_{\rm min}=f_{-\infty}(q)^{-1}$ is the smallest non-zero entry of $q$. We define $q'_\eps\in\mc V_{<\infty}$ by subtracting $\frac{\eps}{\|q\|_0}$ from all non-zero entries of $q$. This is a subnormalized vector with $\|q-q'_\eps\|_1 = \eps$. Since trivially $f_\alpha(q'_\eps) < f_\alpha(q)$ for all $\alpha \in [1,\infty]$, the assumption (iii) implies the strict inequalities $f_\alpha(q'_\eps)<f_\alpha(p)$ for all $\alpha\in[1,\infty]$. Theorem~\ref{thm:Fritz7.15} applied to $S$ together with Proposition~\ref{prop:monhomSub} now implies that there is $n_\eps\in\N$ such that $p^{\otimes n_\eps}$ majorizes $(q'_\eps)^{\otimes n_\eps}$.

We now complete $q'_\eps$ to the normalized vector
\begin{equation}
	q_\eps := q'_\eps \oplus \Big( \underbrace{\frac{\eps}{m}, \ldots, \frac{\eps}{m}}_{m \textrm{ copies}} \Big),
\end{equation}
i.e.,\ we append $m$ additional entries of value $\frac{\eps}{m}$ to $q'_\eps$, where $m$ is assumed large enough to ensure that $p^{\otimes n_\eps}$ still majorizes $q_\eps^{\otimes n_\eps}$. One way to do so is to require
\begin{equation}
	\frac{\eps}{m} \le \min \left\{ (q'_\eps)_{\min}, p_{\min}^{n_\eps} \right\}.
\end{equation}
Let us show that this does the trick. We claim that, whenever $k \geq 1$,
\begin{equation} \label{check_def_maj} \sum_{i=1}^k (q_\eps^{\otimes n_\eps})^{\downarrow}_i \leq \sum_{i=1}^k (p^{\otimes n_\eps})^{\downarrow}_i .
\end{equation}
Indeed, $m$ has been chosen so that the $\|q\|_0^{n_\eps}$ largest entries of $q_\eps^{\otimes n_\eps}$ are exactly the entries of $(q'_{\eps})^{\otimes n_\eps}$, so when $1 \leq k \leq \|q\|_0^{n_\eps}$, the inequality \eqref{check_def_maj} follows from the fact that $p^{\otimes n_\eps}$ majorizes $(q'_{\eps})^{\otimes n_\eps}$. If $\|q\|_0^{n_\eps}<k\leq \|p\|_0^{n_\eps}$, then the inequality follows inductively since the choice of $m$ guarantees that $(q_\eps^{\otimes n_\eps})^{\downarrow}_k \leq \frac{\eps}{m} \le (p^{\otimes n_\eps})^{\downarrow}_k$. Finally if $k > \|p\|_0^{n_\eps}$, then \eqref{check_def_maj} holds trivially since the right-hand side equals 1. In conclusion, $p^{\otimes n_\eps}$ majorizes $q_{\eps}^{\otimes n_\eps}$, and thus $q_\varepsilon \in M_{<\infty}(y)$. But $q_\varepsilon$ has been constructed such that $\|q-q_\varepsilon\|_1 \le \eps$. Since $\eps$ was arbitrary, we conclude $q \in \overline{M_{<\infty}(p)}$, which shows that property (i) holds.
\end{proof}

\subsection{Exact catalytic majorization}\label{subsec:Klimesh}

We now move on to the problem of characterizing exact catalytic majorization for finite probability distributions. Let us recall the R\'{e}nyi divergences $D_\alpha(\cdot\|\cdot)$, $\alpha\geq0$, of \eqref{eq:Renyirelentr}. Note that, whenever $0< \alpha < 1$ and ${\rm supp}\,p \cap {\rm supp}\,q \neq \emptyset$, we have
\begin{align}\label{eq:alphadual}
D_\alpha(p\|q)=\frac{\alpha}{1-\alpha}D_{1-\alpha}(q\|p).
\end{align}
Using these maps, we give a slight reformulation of the main result of \cite{klimesh2007inequalities}:

\begin{theorem}\label{thm:klimeshTheorem}
	Suppose that $p,q\in\mc P_{<\infty}$ and denote by $u:=u_{{\rm supp}\,p\,\cup\,{\rm supp}\,q}$ the uniform distribution on the union of the supports of $p$ and $q$. If, for all $\alpha \in [\frac{1}{2},\infty)$,
\begin{align}
D_\alpha(p\|u)&>D_\alpha(q\|u),\label{eq:directineq}\\
D_\alpha(u\|p)&>D_\alpha(u\|q),\label{eq:reverseineq}
\end{align}
then there is $r\in\mc P_{<\infty}$ such that $p\otimes r$ majorizes $q\otimes r$.
Conversely, if such $r$ exists, then the above inequalities hold at least non-strictly.
\end{theorem}

Note that the left-hand side of the inequalities in \eqref{eq:reverseineq} may attain the value $\infty$, in which case these conditions hold if the right-hand side is finite. Let us remark that we may order the vectors $p$ and $q$ in Theorem~\ref{thm:klimeshTheorem} so that one of their supports is contained in the other, e.g.,\ by choosing $p=p^\downarrow$ and $q=q^\downarrow$. In this way, $u$ is the uniform distribution on the larger one of ${\rm supp}\,p$ and ${\rm supp}\,q$. If the inequalities hold, then this is always ${\rm supp}\,q$, since $\|p\|_0>\|q\|_0$ would imply $D_\alpha(u\|p)<\infty=D_\alpha(u\|q)$, a violation of \eqref{eq:reverseineq}.
If one makes a less apt choice of ordering in which the supports are not contained, then this will make both sides of~\eqref{eq:reverseineq} infinite, and the theorem does not apply.

\begin{remark}
	The sufficient conditions given in Theorem~\ref{thm:klimeshTheorem} are actually also essentially necessary. Indeed for $p^\downarrow \neq q^\downarrow$, catalytic majorization already implies strict inequality in~\eqref{eq:directineq} and~\eqref{eq:reverseineq} thanks to the strict joint convexity of the R\'enyi divergences~\cite{klimesh2007inequalities}. However, in this work, we concentrate on the sufficiency proof and discuss the converse implication merely to indicate that the sufficient conditions are very close to necessary.
\end{remark}

Before proving Theorem~\ref{thm:klimeshTheorem}, we restate the claim in an equivalent way where the connection to the functions $f_\alpha$, $H_1$, and $H'_0$ is transparent:

\begin{theorem}
	\label{thm:klimeshTheorem2}
	Suppose that $p, q \in {\mc P}_{< \infty}$ are finite probability vectors. Suppose that the following hold:
	\begin{enumerate}[label=(\roman*)]
		\item $f_\alpha(p) < f_\alpha(q)$ for all $\alpha \in (0,1)$,
		\item $H_1(p) < H_1(q)$,
		\item $f_\alpha(p) > f_\alpha(q)$ for all $\alpha \in (1,\infty)$,
	\end{enumerate}
	and that either of the following holds:
	\begin{enumerate}[label=(\roman*),resume]
		\item $\|p\|_0<\|q\|_0$, or
		\item $\|p\|_0=\|q\|_0$ and $H'_0(p)<H'_0(q)$ and $f_\alpha(p)>f_\alpha(q)$ for all $\alpha\in(-\infty,0)$.
	\end{enumerate}
	Then there is a finite probability vector $r$ such that $p \otimes r$ majorizes $q \otimes r$.
	Conversely, if such $r$ exists, then the strict inequalities above hold non-strictly.
\end{theorem}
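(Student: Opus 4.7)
The strategy is to invoke Theorem~\ref{thm:Fritz7.1}, applied to the modified majorization semiring $S_{0,1}$ when hypothesis (v) holds and to the majorization semiring $S_1$ when hypothesis (iv) holds. In either case, $\|p\|_1=\|q\|_1$ yields $[p]\sim_1[q]$, and in scenario (v) the additional equality $\|p\|_0=\|q\|_0$ upgrades this to $[p]\sim_{0,1}[q]$ inside $S_{0,1}$. The ambient requirements of Theorem~\ref{thm:Fritz7.1} --- zerosumfree preordered semidomain of polynomial growth with a power universal pair, quasi-complements and quasi-inverses, and $\ms{Frac}(\cdot/\!\sim)\otimes\Z$ being a finite product of fields --- have already been checked in Proposition~\ref{prop:PolynomialGrowth} and the subsequent remark for both $S_{0,1}$ and $S_1$.

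It then remains to match hypotheses (i)--(v) to the strict inequality $\Phi([p])<\Phi([q])$ at every nondegenerate monotone homomorphism $\Phi$ with trivial kernel, together with the strict inequality $\Delta([p])<\Delta([q])$ at every monotone derivation at each degenerate $\R_+$-valued homomorphism. Proposition~\ref{prop:monhomMod} (for $S_{0,1}$) and Proposition~\ref{prop:monhomMaj} (for $S_1$) enumerate the nondegenerate homomorphisms. In case (v) inside $S_{0,1}$, hypothesis (i) furnishes $f_\alpha(p)<f_\alpha(q)$ for $\alpha\in(0,1)$ (into $\R_+$); hypothesis (iii) together with the $\alpha<0$ part of (v) furnishes $f_\alpha(p)>f_\alpha(q)$ for $\alpha\in(1,\infty)\cup(-\infty,0)$ (into $\R_+^{\rm op}$); and at the tropical homomorphisms $f_{\pm\infty}(\cdot)^\lambda$ into $\T\R_+^{\rm op}$ one extracts the strict inequalities $f_{\pm\infty}(p)>f_{\pm\infty}(q)$ from the strict inequalities at finite $\alpha$ by a short asymptotic argument. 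The monotone derivations at the degenerate $\|\cdot\|_1$ and $\|\cdot\|_0$ are, by Proposition~\ref{prop:ModDeriv} together with Remark~\ref{mod_derivation_normalization}, nonnegative multiples of the Shannon entropy $H_1$ and of $H'_0$, so the required strict inequalities are exactly (ii) and the $H'_0$ clause of (v). In case (iv) inside $S_1$, the same template applies via Propositions~\ref{prop:monhomMaj} and~\ref{prop:MajDeriv}; the now-nondegenerate homomorphism $f_0=\|\cdot\|_0$ demands $\|p\|_0<\|q\|_0$, which is precisely (iv).

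Theorem~\ref{thm:Fritz7.1} then produces a nonzero element $a$ of the relevant semiring with $a[p]\preceq a[q]$. Writing $a=[r']$ for a representative $r'\in\mc V_{<\infty}\setminus\{0\}$ and rescaling $r'\mapsto r:=r'/\|r'\|_1$ (which leaves the tensor-product majorization inequality intact, up to an overall positive scalar) gives the desired probability catalyst $r$ with $p\otimes r$ classically majorizing $q\otimes r$. The converse direction is immediate from the monotonicity of each of $f_\alpha$, $H_1$, $H'_0$, and $\|\cdot\|_0$ under classical majorization, since these are all homomorphisms or derivations and hence cancel against the catalyst. The main technical delicacy lies in extracting genuine strict inequalities at the tropical homomorphisms $f_{\pm\infty}$ from the strict inequalities at finite $\alpha$, because a naive pointwise limit $\alpha\to\pm\infty$ only yields non-strict inequality; this should be dealt with by a leading-order asymptotic expansion of $f_\alpha(p)/f_\alpha(q)$, or, failing that, by a small perturbation of $p$ or $q$ reducing to the case of strict tropical inequality.
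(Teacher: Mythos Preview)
Your overall strategy matches the paper's: apply Theorem~\ref{thm:Fritz7.1} to $S_1$ in case (iv) and to $S_{0,1}$ in case (v), using Propositions~\ref{prop:monhomMaj}, \ref{prop:MajDeriv}, \ref{prop:monhomMod}, and \ref{prop:ModDeriv} to identify the relevant homomorphisms and derivations. The matching of hypotheses (i)--(v) to the required strict inequalities is correct, and the converse direction is handled exactly as you say.

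The gap is precisely where you flag it: the tropical homomorphisms $f_{\pm\infty}$. Neither of your proposed fixes works. The asymptotic expansion of $f_\alpha(p)/f_\alpha(q)$ as $\alpha\to\infty$ yields information about the \emph{multiplicity} of the maximal entry, but the tropical homomorphism $f_\infty$ only sees the maximal \emph{value}; if $f_\infty(p)=f_\infty(q)$, then $\Phi([p])=\Phi([q])$ for $\Phi=f_\infty(\cdot)^\lambda$, period, and Theorem~\ref{thm:Fritz7.1} simply does not apply. Perturbation fails too: making $p$ more concentrated (to raise $f_\infty(p)$) gives a $\tilde p$ with $\tilde p\succeq_1 p$, so catalytic majorization of $q$ by $\tilde p$ does not transfer to $p$; making $q$ less concentrated gives $\tilde q$ with $q\succeq_1\tilde q$, which goes the wrong way as well.

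The paper's resolution is a \emph{splitting trick}. Assume $p=p^\downarrow$, $q=q^\downarrow$, and let $I_{\max}$ be the longest initial segment on which $p_i=q_i$; write $p=p''\oplus p'$ and $q=q''\oplus q'$ with $p''=q''$ the restriction to $I_{\max}$. By additivity of each $f_\alpha$ and of $H_1$, the truncated vectors $p',q'$ inherit all the strict inequalities (i)--(iii). The limit $f_\infty(r)=\lim_{\alpha\to\infty}f_\alpha(r)^{1/\alpha}$ still only gives $f_\infty(p')\geq f_\infty(q')$, but now the first entries of $p'$ and $q'$ are \emph{unequal by construction}, forcing $f_\infty(p')>f_\infty(q')$. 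In case (v) one performs the same surgery at the bottom end as well (longest final segment $I_{\min}$ on which $p_i=q_i$) to obtain $f_{-\infty}(p')>f_{-\infty}(q')$, and one checks that $H_0'$ and the $\alpha<0$ inequalities also survive the truncation. Now Theorem~\ref{thm:Fritz7.1} applies to $p',q'$ and produces a catalyst $r$ with $p'\otimes r\succeq q'\otimes r$; adding back the common piece gives
\[
p\otimes r=(p'\otimes r)\oplus(p''\otimes r)\succeq(q'\otimes r)\oplus(q''\otimes r)=q\otimes r.
\]
This additive decomposition is the missing idea in your proposal.
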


Let us clarify how the forward direction of this statement is equivalent to Theorem~\ref{thm:klimeshTheorem}. Let $p$, $q$, and $u$ be as in Theorem~\ref{thm:klimeshTheorem}, and let us assume without loss of generality that ${\rm supp}\,p \subseteq {\rm supp}\,q$. If $\|p\|_0=\|q\|_0$, then there are no infinities in the inequalities in \eqref{eq:directineq} and \eqref{eq:reverseineq}, and
\begin{align}
D_\alpha(p\|u)&=\frac{1}{\alpha-1}\log{f_\alpha(p)}+\log{\|p\|_0},\\
D_\alpha(u\|p)&=\frac{1}{\alpha-1}\log{f_{1-\alpha}(p)}+\frac{\alpha}{1-\alpha}\log{\|p\|_0}
\end{align}
for all $\alpha\neq1$, and
\begin{align}
D_1(p\|u)&=\log{\|p\|_0}-H_1(p),\\
D_1(u\|p)&=-\log{\|p\|_0}-\frac{1}{\|p\|_0}H'_0(p),
\end{align}
and likewise for $q$. Taking into account the sign of $\alpha-1$, we obtain the equivalence of Theorem~\ref{thm:klimeshTheorem2} with Theorem~\ref{thm:klimeshTheorem} in the case $\|p\|_0 = \|q\|_0$, since for example the inequality $D_1(u\|p)>D_1(u\|q)$ turns into $H'_0(p)<H'_0(q)$. If $\|p\|_0<\|q\|_0$, then the inequalities \eqref{eq:reverseineq} are automatically satisfied as the left-hand side is infinite whereas the right-hand side is finite. Thus only the inequalities \eqref{eq:directineq} are relevant, and we arrive at items (i)--(iii) of Theorem~\ref{thm:klimeshTheorem2}. Finally, as alluded to above, $\|p\|_0>\|q\|_0$ is excluded by the assumptions of both theorems.

We now prove Theorem~\ref{thm:klimeshTheorem2}.

\begin{proof}
	Let us consider the case $\|p\|_0 < \|q\|_0$ first. Since we have a strict inequality for $\|\cdot\|_0$, the relevant semiring is $S_1$. Recall from Proposition \ref{prop:monhomMaj} that the set of nondegenerate monotone homomorphisms is now constituted by $f_\alpha:S_1\to\R_+$ for $\alpha\in(1,\infty)$ and $f_\alpha:S_1\to\R_+^{\rm op}$ for $\alpha\in[0,1)$ as well as $f_\infty : S_1 \to \T\R_+$, and the only relevant derivation at the degenerate homomorphism $\|\cdot\|_1$ is $-H_1$ by Proposition~\ref{prop:MajDeriv}. Recalling that
\begin{equation}\label{eq:alphalimit1}
	f_\infty(r) = \|r\|_\infty = \lim_{\alpha \to \infty} \|r\|_\alpha = \lim_{\alpha \to \infty} f_\alpha(r)^{1/\alpha}
\end{equation}
for all $r\in\mc V_{<\infty}$, where $\|\cdot\|_\alpha$ for $\alpha\in[1,\infty]$ are the usual $\alpha$-norms, it follows that $f_\infty(p)\geq f_\infty(q)$. So if $f_\infty(p)>f_\infty(q)$, then the claim immediately follows by applying Theorem~\ref{thm:Fritz7.1} to $S_1$ and using Propositions~\ref{prop:monhomMaj} and~\ref{prop:MajDeriv}. Thus consider the subcase $f_\infty(p)=f_\infty(q)$. We may freely also assume that $p=p^\downarrow = (p_1,\ldots,p_n)$ and $q=q^\downarrow = (q_1,\ldots,q_n)$ as well as $p \neq q$. We also write $I := \{1,\ldots,n\}$ as the disjoint union $I=I_{\rm max}\cup I_0$ where $I_{\rm max}=\{1,\ldots,m\}$ is the longest initial sequence with $p_i=q_i$ for $i\in I_{\rm max}$, so that $p_{m+1}\neq q_{m+1}$. Let us use the notations
\begin{align}
p'&:=(p_i)_{i\in I_0},\\
q'&:=(q_i)_{i\in I_0},\\
p''=q''&:=(p_i)_{i\in I_{\rm max}} = (q_i)_{i \in I_{\rm max}}.
\end{align}
We now have, for all $\alpha\in[0,1)$,
\begin{equation}
f_\alpha(p')+f_\alpha(p'')=f_\alpha(p)<f_\alpha(q)=f_\alpha(q')+f_\alpha(q'')=f_\alpha(q')+f_\alpha(p''),
\end{equation}
where we have used the additivity of $f_\alpha$. Therefore $f_\alpha(p')<f_\alpha(q')$ for all $\alpha\in[0,1)$. Similarly, we obtain $H_1(p')<H_1(q')$ and $f_\alpha(p')>f_\alpha(q')$ for all $\alpha>1$. Using the limit in \eqref{eq:alphalimit1}, we conclude $f_\infty(p')\geq f_\infty(q')$, i.e.,\ $p_{m+1}\geq q_{m+1}$. As these entries do not coincide, we must have $p_{m+1}>q_{m+1}$, or equivalently $f_\infty(p')>f_\infty(q')$. Thus, Theorem~\ref{thm:Fritz7.1} can be applied to $p',q' \in S_1$, and therefore there is $r\in\mc V_{<\infty}\setminus\{0\}$ such that $p'\otimes r\minor_1 q'\otimes r$. Clearly, by renormalizing, we may assume that $\|r\|_1=1$. Putting things together, we have
\begin{align}
p\otimes r&=(p'\otimes r)\oplus(p''\otimes r)=(p'\otimes r)\oplus(q''\otimes r)\\
&\minor_1(q'\otimes r)\oplus(q''\otimes r)=q\otimes r,
\end{align}
as was to be shown.

It remains to consider the case $\|p\|_0 = \|q\|_0$, which works similarly. Now the relevant semiring is $S_{0,1}$, since $\|\cdot\|_0$ is degenerate. In addition to the monotone homomorphisms on $S_1$, by Proposition~\ref{prop:monhomMod} we now also need the $f_\alpha:S_{0,1}\to\R_+$ for $\alpha\in(-\infty,0)$ as well as $f_{-\infty}$, and the only relevant monotone derivation at the now degenerate $\|\cdot\|_0$ is $-H'_0$ by Proposition~\ref{prop:ModDeriv}. Using the fact that for all $r=(r_1,\ldots,r_n)\in\mc V_{<\infty}$ with $\tilde{r}:=(1/r_i)_{i\in{\rm supp}\,r}$ we have
\begin{equation}\label{eq:alphalimit2}
	f_{-\infty}(r) = \|\tilde{r}\|_\infty = \lim_{\alpha \to -\infty} \|\tilde{r}\|_{-\alpha} = \lim_{\alpha \to -\infty} f_\alpha(\tilde{r})^{-1/\alpha},
\end{equation}
it follows that $f_{-\infty}(p)\geq f_{-\infty}(q)$ in addition to $f_\infty(p) \ge f_\infty(q)$ from \eqref{eq:alphalimit1}. If $f_{\pm\infty}(p) > f_{\pm\infty}(q)$, the claim follows by another application of Theorem~\ref{thm:Fritz7.1}. In general, we may again assume that $p=p^\downarrow$ and $q=q^\downarrow$ and write the common support $I=\{1,\ldots,n\}$ of these vectors as a disjoint union $I=I_{\rm max}\cup I_0\cup I_{\rm min}$, where $I_{\rm max}=\{1,\ldots,\ell\}$ is the longest initial part of $I$ and $I_{\rm min}=\{\ell+m+1,\ldots,n\}$ is the longest final part of $I$ on which $p_i=q_i$ for all $i\in I_{\rm max}\cup I_{\rm min}$. In particular, we have $p_{\ell+1}\neq q_{\ell+1}$ and $p_{\ell+m}\neq q_{\ell+m}$. Let us now denote
\begin{align}
p'&:=(p_i)_{i\in I_0},\\
q'&:=(q_i)_{i\in I_0},\\
p''=q''&:= (p_i)_{i\in I_{\rm max}\cup I_{\rm min}}  = (q_i)_{i\in I_{\rm max}\cup I_{\rm min}}.
\end{align}
Similarly as in the first case, $p'$ and $q'$ satisfy the same inequalities (i)--(iii) and (iv). Moreover, using again the limits in \eqref{eq:alphalimit1} and \eqref{eq:alphalimit2}, we have $f_{\pm\infty}(p')\geq f_{\pm\infty}(q')$, or equivalently $p_{\ell+1}\geq q_{\ell+1}$ and $p_{\ell+m}\leq q_{\ell+m}$. Since we do not have equalities in these inequalities, we conclude that they hold strictly, or equivalently $f_{\pm\infty}(p') > f_{\pm\infty}(q')$. Theorem~\ref{thm:Fritz7.1} applied to $S_{0,1}$ now gives us $r\in\mc V_{<\infty}$ such that $p'\otimes r\minor_{0,1} q'\otimes r$. Normalizing $r$ and proceeding as in the first case proves the claim.
\end{proof}

\subsection{Exact large-sample majorization}

We now turn our attention to exact majorization in large samples. The following result first appeared in \cite{Jensen2019}.

\begin{theorem}[Proposition 3.7 in \cite{Jensen2019}] \label{thm:jensenTheorem}
Let $p,q\in\mc P_{<\infty}$. If, for all $\alpha\in[0,\infty]$,
\begin{align}
H_\alpha(p)&<H_\alpha(q),\label{eq:Jensenineq1}
\end{align}
then $p^{\otimes n}$ majorizes $q^{\otimes n}$ for all sufficiently large $n\in\N$.
Conversely, if this is the case for all $n \in \N$, then the above inequalities hold non-strictly.
\end{theorem}

In \cite{Jensen2019}, the above condition for large-sample transformation was stated in the form
\begin{equation}
\min_{\alpha\in[0,\infty]}\frac{H_\alpha(q)}{H_\alpha(p)}>1.
\end{equation}
But since the family of maps $\{H_\alpha\}_{\alpha\in(0,\infty]}$ forms a compact topological space in the weakest topology which makes the evaluation maps continuous\footnote{This can be seen as an instance of another general fact about preordered semirings~\cite[Proposition 8.5]{Fritz2021b}.}, the existence of the above minimum is already implied by the strict inequalities in \eqref{eq:Jensenineq1}. Using the fact that
\begin{equation}
H_\alpha(r)=\frac{1}{1-\alpha}\log{f_\alpha(r)}
\end{equation}
for all $r\in\mc P_{<\infty}$ and $\alpha\in(0,1)\cup(1,\infty)$, we may also formulate Theorem~\ref{thm:jensenTheorem} as follows:

\begin{theorem}
	Let $p,q \in\mc P_{<\infty}$ be finite probability vectors. If
	\begin{align}
		f_\alpha(p)&<f_\alpha(q) \qquad \forall \alpha\in[0,1),\label{eq:Jensenineq2}\\
		f_\alpha(p)&>f_\alpha(q) \qquad \forall \alpha\in(1,\infty],\label{eq:Jensenineq3}\\
		H_1(p)&<H_1(q),\label{eq:Jensenineq4}
	\end{align}
	then $p^{\otimes n}$ majorizes $q^{\otimes n}$ for all sufficiently large $n\in\N$. Conversely, if this is the case for all $n \in \N$, then the above inequalities hold non-strictly.
\end{theorem}

This is the version of the theorem we will now prove. Note that the derivation $H'_0$ is not directly involved in these conditions; instead, the relevant quantity at $\alpha=0$ is $H_0$, i.e.,\ essentially $\|\cdot\|_0$. 

\begin{proof}
	Let us assume the inequalities in \eqref{eq:Jensenineq2}, \eqref{eq:Jensenineq3}, and \eqref{eq:Jensenineq4}; in particular $\|p\|_0<\|q\|_0$. Thus the relevant preordered semiring is $S_1$ where by Proposition~\ref{prop:monhomMaj} the set of nondegenerate monotone homomorphisms $\fii:S_1\to\mb K\in\{\R_+,\R_+^{\rm op},\T\R_+,\T\R_+^{\rm op}\}$ consists of $f_\alpha$ for $\alpha\in[0,1)\cup (1,\infty]$ and the only relevant monotone derivation at the degenerate $\|\cdot\|_1$ is $-H_1$. Since $\|q\|_0 > \|p\|_0$, we know that $[q] \neq 1$, and therefore $q$ is power universal in $S_1^{\rm op}$ by Remark~\ref{rem:polygrowthmaj}. This is why we have to consider $S_1^{\rm op}$ so that we can apply Theorem \ref{thm:Fritz8.6}.
 
    Moving to $S_1^{\rm op}$ from $S_1$, we may still characterize the monotone homomorphisms of $S_1^{\rm op}$ into $\mb K\in\{\R_+,\R_+^{\rm op},\T\R_+,\T\R_+^{\rm op}\}$ and relevant derivations similarly as in Propositions \ref{prop:monhomMaj} and \ref{prop:MajDeriv} with the exception that $\R_+$ and $\T\R_+$ switch places respectively with $\R_+^{\rm op}$ and $\T\R_+^{\rm op}$ and the sign of the derivation changes. This means that the nondegenerate monotone homomorphisms $\Phi:S_1^{\rm op}\to\mb K$ is given by $\Phi=f_\alpha$ with $\alpha\in[0,1)$ when $\mb K=\R_+$, by $\Phi=f_\alpha$ with $\alpha\in(1,\infty)$ when $\mb K=\R_+^{\rm op}$, and by $\Phi=f_\infty$ when $\mb K=\T\R_+^{\rm op}$ and there are none when $\mb K=\T\R_+^{\rm op}$. The relevant monotone derivation at the only degenerate homomorphism $f_1:S_1^{\rm op}\to\R_+$ is $H_1$ (with positive sign now). Applying item (a) of Theorem~\ref{thm:Fritz8.6} to $S_1^{\rm op}$, we obtain the claim.

	The converse part is straightforward based on the fact that the $H_\alpha$ are additive under $\otimes$ and monotone under majorization.
\end{proof}

Let us note that, when $p,q\in\mc P_{<\infty}$ satisfy $H_\alpha(p)<H_\alpha(q)$, according to Remark~\ref{rem:asymptocatalytic} and part (b) of Theorem \ref{thm:Fritz8.6}, $p\otimes r$ majorizes $q\otimes r$ where $r$ has the form \eqref{eq:FormOfCatalyst} with some sufficiently large $n\in\N$.

However, using our results on matrix majorization, namely Theorem~\ref{theor:MatrMajSuff}, together with Theorem~\ref{thm:jensenTheorem} we obtain an extension to Theorem~\ref{thm:jensenTheorem} characterizing large-sample simple majorization. As in Theorem~\ref{thm:klimeshTheorem}, we assume that, if the left-hand side of the inequalities in \eqref{eq:LSdirect} or \eqref{eq:LSinverse} is infinite and the right-hand side is finite, the inequality holds.

\begin{corollary}\label{cor:JensenExtension}
Consider probability vectors $p,q\in\mc P_{<\infty}$. Denote by $u$ the uniform distribution on ${\rm supp}\,p\cup {\rm supp}\,q$. Whenever
\begin{align}
D_\alpha(p\|u)&>D_\alpha(q\|u),\label{eq:LSdirect}\\
D_\alpha(u\|p)&>D_\alpha(u\|q)\label{eq:LSinverse}
\end{align}
for all $\alpha\in[\frac{1}{2},\infty]$, then, for sufficiently large $n\in\N$, $p^{\otimes n}$ majorizes $q^{\otimes n}$.
\end{corollary}

\begin{proof}
We may again assume that $p=p^\downarrow$ and $q=q^\downarrow$. As we observed in the discussion just after Theorem~\ref{thm:klimeshTheorem}, we now see that, if conditions \eqref{eq:LSdirect} and \eqref{eq:LSinverse} hold, then ${\rm supp}\,p\subseteq{\rm supp}\,q$, so we may assume that $u$ is the uniform distribution on ${\rm supp}\,q$. Similarly as in our discussion just after Theorem~\ref{thm:klimeshTheorem2}, we now see that the conditions in the claim split into two depending on the sizes of the supports of $p$ and $q$:
\begin{align}
f_\alpha(p)&<f_\alpha(q)\ {\rm for\ all}\ \alpha\in(0,1),\\
H_1(p)&<H_1(q),\ {\rm and}\\
f_\alpha(p)&>f_\alpha(q)\ {\rm for\ all}\ \alpha\in(1,\infty]
\end{align}
and, additionally, either $\|p\|_0<\|q\|_0$ or $\|p\|_0=\|q\|_0$ and $H'_0(p)<H'_0(q)$ and $f_\alpha(p)>f_\alpha(q)$ for all $\alpha\in[-\infty,0)$.

If $\|p\|_0<\|q\|_0$, the claim follows now directly from Theorem~\ref{thm:jensenTheorem}. Let us concentrate, hence, on the case $\|p\|_0=\|q\|_0$. Now the uniform distribution $u$ shares the same support with $p$ and $q$. Recalling the definition of the matrix majorization semiring $S^2$, we now see that $[(p,u)],[(q,u)]\in S^2$. According to Corollary~\ref{cor:RelativeMaj}, the conditions in \eqref{eq:LSdirect} and \eqref{eq:LSinverse} imply that, for $n\in\N$ sufficiently large, $(p^{\otimes n},u^{\otimes n})$ majorizes $(q^{\otimes n},u^{\otimes n})$ and, because $u^{\otimes n}$ is the uniform distribution sharing the same support with $p^{\otimes n}$ and $q^{\otimes n}$, this means that $p^{\otimes n}$ majorizes $q^{\otimes n}$.
\end{proof}

Note that the difference between the conditions appearing in Theorem~\ref{thm:klimeshTheorem} and Corollary~\ref{cor:JensenExtension} is that the conditions at the endpoint $\alpha=\infty$ are included in Corollary~\ref{cor:JensenExtension} whereas in Theorem~\ref{thm:klimeshTheorem} they are not included. This is in line with the fact that large-sample majorization implies catalytic majorization.

\section*{Acknowledgment}

T.\ F.\ would like to thank Omer Tamuz and Luciano Pomatto for insightful discussions. E.\ H.\ and M.\ T.\ are supported by the National Research Foundation, Singapore and A*STAR under its CQT Bridging Grant. M.\ T.\ is also supported by NUS startup grants (R-263-000-E32-133 and R-263-000-E32-731).

\bibliographystyle{ultimate}
\bibliography{library}

\end{document}